\newcommand\reallywidehat[1]{%
\savestack{\tmpbox}{\stretchto{%
  \scaleto{%
    \scalerel*[\widthof{\ensuremath{#1}}]{\kern-.6pt\bigwedge\kern-.6pt}%
    {\rule[-\textheight/2]{1ex}{\textheight}}
  }{\textheight}%
}{0.5ex}}%
\stackon[1pt]{#1}{\tmpbox}%
}
\date{}
\def\Idupinfty{\mathcal{I}_{{{\operatorname{dup}}}}^\infty}
\newcommand{\seq}[1]{(#1_n)_{n\geq0}}
 \def\Rt{\tilde{R}}
 \def\Lt{\tilde{\Lambda}}
\newcommand{\Q}{\mathbb{Q}}
\newcommand{\val}{\operatorname{val}}
\newcommand{\LL}{\mathbb{L}}
\newcommand{\K}{\mathbb{K}}
\newcommand{\M}{\mathcal{M}}
\newcommand{\B}{\mathcal{B}}
\DeclareMathOperator{\Jac}{\operatorname{Jac}}
\DeclareMathOperator{\Det}{\operatorname{Det}}
\DeclareMathOperator{\Sdup}{\mathcal{S}_{{{\operatorname{dup}}}}}
\def\acfield#1{\overline{\K(#1)}}
\def\Sdupe{\mathcal{S}_{{{\operatorname{dup}}}}^\epsilon}
\DeclareMathOperator{\algdeg}{\operatorname{algdeg}}
\DeclareMathOperator{\totdeg}{\operatorname{totdeg}}
\DeclareMathOperator{\lex}{lex}
\newtheorem{thm}{Theorem}
\newtheorem{lem}[thm]{Lemma}
\newtheorem{prop}[thm]{Proposition}
\newtheorem{ex}{Example}
\newtheorem{rmk}{Remark}
\newtheorem*{ex1}{Example 1 (cont.)}
\newtheorem*{ex2}{Example 2 (cont.)}
\title{Systems of Discrete Differential Equations,\\
Constructive Algebraicity of the Solutions}
\author{\href{https://mathexp.eu/notarantonio/}{Hadrien Notarantonio}\footnote{Inria, Palaiseau (France). hadrien.notarantonio@inria.fr}, \href{https://yurkevi.ch/}{Sergey Yurkevich}\footnote{ A\&R TECH (Austria). sergey.yurkevich@univie.ac.at}}
\begin{document}

\maketitle

\begin{abstract}
In this article, we study systems of $n \geq 1$, not necessarily linear, discrete differential equations (DDEs) of order $k \geq 1$ with one catalytic variable. Such equations appear for instance in~\cite{Tutte62, BrTu64, Brown65, BMJ06, BeBM11} in the case~$n=1$ and in~\cite{BMJ06,BoBMDoPe17,BBGR19,BK20,permclass} in the case~$n>1$.
We provide a constructive and elementary proof of algebraicity of the solutions of such equations. This part of the present article can be seen as a generalization of the pioneering work by Bousquet-M\'elou and Jehanne (2006) who settled down the case $n=1$. Moreover, we obtain effective bounds for the algebraicity degrees of the solutions and provide an algorithm for computing annihilating polynomials of the algebraic series. Finally, we compare three different strategies for solving systems of DDEs in view of practical applications.
\end{abstract}

\section{Introduction}\label{sec:intro}
 \subsection{Context and motivation}
 
The equations that lie in the interest of this work are so-called \emph{discrete differential equations with one catalytic variable of fixed-point type}. They take~the~form
\begin{equation}\label{eq:DDE}
F(t, u) = f(u) + t \cdot Q(F(t, u), \Delta_a F(t, u), \dots, \Delta_a^k F(t, u), t, u),
\end{equation}
where $k \in \mathbb{N}$ is the \emph{order}, $f$ and $Q$ are polynomials, and (for some $a \in \Q$) $\Delta_a^\ell$ is the $\ell$th iteration of the~\emph{discrete derivative operator} $\Delta_a: \Q[u][[t]] \rightarrow \Q[u][[t]]$ defined~by 
\[
\Delta_a F(t,u) := \frac{F(t,u) - F(t,a)}{u-a}.
\]
Discrete differential equations (DDEs) are ubiquitous in enumerative combinatorics~\cite{Tutte62, BrTu64, Brown65, BMJ06, BeBM11}. Indeed, enumerating discrete structures usually leads to introducing the corresponding generating function, say~$G(t)$. In some of such cases, the combinatorial nature of the initial problem can be transformed directly into an algebraic or analytic question on the equation satisfied by~$G(t)$. 
However for many practical counting problems, the initial combinatorial structure is too coarse to be translated into any meaningful equation. In these cases it is often helpful to try to solve a more refined problem, introducing new structure into the initial question and consequently a new variable $u$ in the generating function: one is then led to study a DDE satisfied by some bivariate series~$F(t,u) \in \Q[u][[t]]$, even though the interest only lies in the specialization, usually $G(t) = F(t,0)$ or $G(t) = F(t,1)$. Although this idea is, of course, very classical and has been used for decades~\cite{Tutte62,BrTu64,Brown65}, the name ``catalytic'' for such a variable $u$ was introduced only relatively recently by Zeilberger~\cite{Zeilberger00} in the year 2000. 

\begin{ex} \label{ex:2-const} 
The so-called $2$-constellations are special bi-colored planar maps
~(see \cite[\S $5.3$]{BMJ06} for the definition). Let the sequence $\seq{a}$ enumerate the~$2$-constellations with~$n$ black faces; we wish to discover properties of $G(t) = \sum_{n \geq 0} a_n t^n$ (e.g. a closed-form expression for $G(t)$ or the numbers $a_n$, asymptotics of $(a_n)_{n \geq 0}$, etc). This problem is usually refined by considering the numbers~$a_{n, d}$ enumerating $2$-constellations with~$n$ black faces and outer degree~$2d$. With now more constraints on the studied enumeration (black faces and outer-degree), it is possible to show by a recursive analysis of the construction of a~$2$-constellation (see~\cite[Section~$5.3$]{BMJ06} for details) that the bivariate generating function~$F(t, u) \coloneqq \sum_{n, d\geq 0}{a_{n, d}u^dt^n}\in\Q[u][[t]]$ satisfies the~DDE~of~order~$1$
\begin{align}\label{eq:2-const}
    F(t, u) = 1 + tuF(t, u)^2 + tu \Delta_1 F(t,u). 
\end{align}
Note that~$G(t) = F(t, 1) \in\Q[[t]]$ is the generating function we were initially interested in.
The classical way of studying the series~$G(t)$ is by considering Equation~\eqref{eq:2-const}. For instance, by studying~\eqref{eq:2-const}, it can be shown using~\cite{Brown65qm}, \cite[Section~$2$]{BMJ06} or~\cite[Proposition~$2.4$]{BoChNoSa22} that~$F(t,u)$ is an algebraic function over $\Q(t,u)$ and consequently that $G(t)$ is algebraic over~$\Q(t)$. Explicitly,
\[
16t^3G(t)^2 - (8t^2 + 12t - 1)tG(t) + t(t^2 + 11t - 1) = 0.
\]
Knowing that~$R(t, z) := 16t^3z^2 - (8t^2 + 12t - 1)tz + t(t^2 + 11t - 1)\in\Q[t, z]$ annihilates~$G(t)$, it is possible to show that for~$n \geq 1$, one has the closed-form expression
\[a_n = 3\frac{2^{n-1}}{(n+2)(n+1)} \binom{2n}{n}\] whose asymptotic behavior is~$a_n \sim 3 \cdot 8^n / \sqrt{4 \pi n^{5}}$, by Stirling's formula.
\end{ex}

The algebraicity of $F(t,u)$ over~$\Q(t, u)$ in \cref{ex:2-const} is no coincidence. In their pioneering work~\cite{BMJ06}, Bousquet-Mélou and Jehanne proved (see \cite[Theorem~$3$]{BMJ06}) that the unique power series solution of a functional equation of the form~\eqref{eq:DDE} is always an algebraic function.

 In a variety of different contexts throughout combinatorics also appears their natural extension \emph{systems of DDEs} of fixed-point type that are, systems of the form
\begin{equation}\label{eqn:init_system:first}
    \begin{cases}  
F_1 = f_1(u) + t\cdot Q_1(\nabla_a^k F_1, \ldots, \nabla_a^k F_n, t, u),\\
   \indent \vdots \hspace{2cm} \vdots \\
F_n = f_n(u) + t\cdot Q_n(\nabla_a^k F_1, \ldots, \nabla_a^k F_n, t, u),
    \end{cases}
\end{equation}
where for $i=1,\dots,n$ the polynomials $f_i \in \Q[u], Q_i \in \Q[y_1,\dots,y_{n(k+1)},t,u]$ are given and we write $\nabla_a^k F \coloneqq (F, \Delta_a F, \ldots, \Delta_a^k F)$ for $\Delta_a$ as before, and the unknowns $F_i = F_i(t,u) \in \Q[u][[t]]$. As previously, $k$ is called the \emph{order} of \eqref{eqn:init_system:first}. Such systems of functional equations appear, for instance, in the enumeration of hard particles on planar maps~\cite[\S 5.4]{BMJ06}, inhomogeneous lattice paths~\cite{BK20}, certain orientations with $n$ edges \cite[\S 5]{BoBMDoPe17}, or parallelogram polyominoes~\cite[\S 7.1]{BBGR19} or even \href{https://permpal.com/}{permutation classes}~\cite{permclass}.

For any given functional equation or system of such, the results one typically wishes to obtain are for instance: a closed-form expression for the number of objects of a given size, a grasp on the asymptotic behavior, or a classification of the nature of the generating function(s) (e.g. algebraic, D-finite, etc.). It is often easier to obtain a closed-formula or an asymptotic estimate by studying a \emph{witness} of the nature of the generating function (e.g. an annihilating polynomial, a linear ODE, etc.), so a natural aim is to \emph{solve} the system, i.e. to compute such a witness. These objectives frequently yield arbitrarily difficult challenges at the intersection with enumerative combinatorics, theoretical physics~\cite{BrItPaZu78} and computational geometry~\cite{CaDeSc06}.

\paragraph{Main goals:} We want to prove in an elementary way that the components of the unique solution~$(F_1,\dots,F_n)\in\Q[u][[t]]^n$ of \eqref{eqn:init_system:first} are always algebraic power series over~$\Q(t, u)$. Moreover in the setting~$n>1$ in~\eqref{eqn:init_system:first}, we want to design, analyze and theoretically compare geometry-driven algorithms that compute an annihilating polynomial of the specialized series~$F_1(t, a)$.\\

Before providing a state-of-the-art and stating our contributions, we introduce a combinatorial example of systems of DDEs that we shall use intensively for illustrating this paper.
\begin{ex} \label{ex:eq27}
The following system of DDEs for the generating function of certain planar orientations was considered in~\cite[Eq.$(27)$]{BoBMDoPe17}:
\begin{equation}
    \begin{cases}\label{eq:eq27}
    \textbf{(E$_{\textbf{F}_1}$):} \;\;
    F_1(t, u) = 1 + t\cdot \big(u +2uF_1(t, u)^2 + 2uF_2(t, 1) + u\frac{F_1(t, u)-uF_1(t, 1)}{u - 1}\big),\\
    \textbf{(E$_{\textbf{F}_2}$):} \;\;
    F_2(t, u) = t\cdot \big(2uF_1(t, u)F_2(t, u) + uF_1(t, u) + uF_2(t, 1) + u\frac{F_2(t, u)-uF_2(t, 1) }{u - 1}\big).
    \end{cases}
\end{equation}
 We show in~\cref{sec:generic_equations_of_first_order} that $F_1(t, 1)=1+2t+10t^2+66t^3+\cdots$ is algebraic over~$\Q(t)$, and that its minimal polynomial~$64t^3z_0^3 + (48t^3 - 72t^2 + 2t)z_0^2 - (15t^3 - 9t^2 - 19t + 1)z_0 + t^3 + 27t^2 - 19t + 1$ can be computed using tools coming from elimination theory.
\end{ex}

\subsection{Previous works}
In the lines below, we present an overview of the main results regarding algebraicity of solutions of systems of DDEs. 
We start with the study of a single DDE for~$k>1$ (the case~$k=1$ is not of our interest since, as we shall see later in this article, systems of DDEs of order~$k\geq 1$ reduce to a single functional equation that contains at least~$2$ univariate series).

In the seminal work~\cite{BMJ06}, Bousquet-Mélou and Jehanne completely resolve algebraicity of solutions of DDEs~\eqref{eq:DDE}, equivalently the case $n = 1$ in \eqref{eqn:init_system:first}.
Moreover, Bousquet-Mélou and Jehanne provide systematic methods for computing an annihilating polynomial of the specialized series~$F(t, a)$. 
For proving~\cite[Theorem~$3$]{BMJ06}, the authors designed a ``nonlinear kernel method'' which allows one to prove that the unique solution of (\ref{eq:DDE}) is always an algebraic function over~$\Q(t,u)$. Significantly in practice, this approach yields an algorithm for finding an annihilating polynomial of the specialization $F(t,a)$ and of the bivariate series~$F(t,u)$. \textcolor{black}{The idea of their algebraicity proof is to reduce the resolution of the DDE to solving some system of polynomial equations which has a solution whose coordinates contains the involved specializations of~$F$. Their proof involves a symbolic deformation argument ensuring that the polynomial system which is constructed contains enough independent equations. }For efficiency considerations for the resolution of a single DDE of the form~\eqref{eq:DDE}, a recent algorithmic work~\cite{BoNoSa23} by Bostan, Safey El Din and Notarantonio targeted the intensive use of effective algebraic geometry in order to efficiently solve the underlying polynomial systems.\\
\indent Regarding systems of DDEs (i.e. the case~$n>1$), the usual strategy (e.g. \cite[Section~$11$]{BMJ06}) is to reduce a given system to a single equation and to apply the method from~\cite[Section~$2$]{BMJ06}. Nevertheless, since the reduced equation may not be of the form~(\ref{eq:DDE}) anymore, the ideas of~\cite[Section~$2$]{BMJ06} may not be applicable. 
 In the literature, there exist two methods to overcome these theoretical issues. First, a deep theorem in commutative algebra by Popescu~\cite{Popescu85a}, central in the so-called “nested Artin approximation” theory, guarantees that equations of the form~(\ref{eqn:init_system}) always admit an algebraic solution (see also~\cite[Theorem~$16$]{BoBMDoPe17} for a statement of this theorem). Note that the nested condition is automatically satisfied in this case and that the uniqueness of the solution is obvious. The approximation theorem by Popescu~\cite[Theorem~$1.4$]{Popescu85a} is a consequence of a technical regularity property~\cite[Theorem~$1.3$]{Popescu85a} in the ring of multivariate formal power series.  A drawback of using Popescu's theorem, however, is that its proof is a priori highly non-constructive and can only be applied as a ``black box'', whereas in practice one is often interested in the explicit annihilating polynomials of the solutions. Secondly, the frequent case when~(\ref{eqn:init_system}) is linear in the bivariate formal power series and their specializations was effectively solved (i.e. their proof of algebraicity yields an algorithm) in the more recent article~\cite{BK20} by Buchacher and Kauers by using a multi-dimensional kernel method. However even if their proof yields an algorithm, its efficiency was not discussed at all.  
 Note that the now common multi-dimensional kernel method appears as well in the article of the same year~\cite{ABBG20} by Asinowski, Bacher, Banderier and Gittenberger. 
 
 Before the present work, however, there was no systematic approach for dealing with systems of DDEs such as~\eqref{eq:eq27}.
It is in this context in which the present paper is situated.\looseness=-1

\subsection{Contributions} \textcolor{black}{This paper is the full version of the extended abstract~\cite{NoYu23} that was published in the proceedings of the conference \emph{Formal Power Series and Algebraic Combinatorics~$2023$}.}

The first contribution of this article is a generalization of the algebraicity result~\cite[Theorem~$3$]{BMJ06} to systems of discrete differential equations of a fixed-point type. Precisely, we prove the following theorem. Here and in the following, we denote $\K$ a field of characteristic $0$.
\begin{thm}\label{thm:main_thm}
Let $n, k\geq 1$ be integers and $f_1, \ldots, f_n\in\K[u]$, $Q_1, \ldots, Q_n\in\K[y_1, \ldots, y_{n(k+1)}, t, u]$ be polynomials. For~$a\in\K$, set~$\nabla_a^k F := (F, \Delta_a F, \ldots, \Delta_a^k F)$. Then the system of equations
\begin{align}\label{eqn:init_system}
    \begin{cases}  
\textbf{\textup{\text{(E}}}_{\textbf{\textup{\text{F}}}_\textbf{\textup{\text{1}}}} \textbf{\textup{\text{):}}}
    \;\;\; F_1 = f_1(u) + t\cdot Q_1(\nabla_a^k F_1, \ldots, \nabla_a^k F_n, t, u),\\
   \quad \vdots \hspace{3cm} \vdots \\
\textbf{\textup{\text{(E}}}_{\textbf{\textup{\text{F}}}_\textbf{{\text{n}}}} \textbf{\textup{\text{):}}} \;\;\;F_n = f_n(u) + t\cdot Q_n(\nabla_a^k F_1, \ldots, \nabla_a^k F_n, t, u)
    \end{cases}
\end{align}
admits a unique vector of solutions~$(F_1, \ldots, F_n)\in\K[u][[t]]^n$, and all its components are algebraic functions over~$\K(t, u)$.
\end{thm}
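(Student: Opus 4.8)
The plan is to mimic the nonlinear kernel method of Bousquet-M\'elou and Jehanne, adapted to the vector-valued setting. First I would establish existence and uniqueness of $(F_1,\dots,F_n)\in\K[u][[t]]^n$: reading \eqref{eqn:init_system} modulo $t^N$ shows that each coefficient $[t^N]F_i$ is determined by lower-order coefficients (the right-hand side only involves strictly lower powers of $t$ times polynomial combinations of the $F_j$ and finitely many discrete derivatives, all of which preserve polynomiality in $u$), so a standard Picard-type iteration converges $t$-adically to a unique solution. The real content is algebraicity, and the main obstacle is, as in \cite{BMJ06}, producing \emph{enough independent equations}: the discrete derivatives $\Delta_a^\ell F_i$ involve the unknown univariate specializations $F_i(t,a), (\partial_u F_i)(t,a),\dots$, so substituting a formal series $u = U(t)$ into the system introduces many new unknowns, and one must show the resulting polynomial system is zero-dimensional (or at least that the relevant coordinates are algebraic).

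The key steps, in order, would be: (1) Rewrite the system polynomially. Each $\Delta_a^\ell F_i$ is a $\K[u]$-linear combination of $F_i(t,u)$ and the derivatives $d_{i,j} := \tfrac{1}{j!}(\partial_u^j F_i)(t,a)$ for $j = 0,\dots,k-1$; so \eqref{eqn:init_system} becomes a system $F_i = \mathcal{P}_i(F_1,\dots,F_n,\mathbf{d},t,u)$ where $\mathbf{d}$ collects the $nk$ unknown univariate series $d_{i,j}\in\K[[t]]$ and each $\mathcal{P}_i$ is polynomial. (2) Apply the kernel method: differentiate each equation $\mathbf{E}_{F_i}$ with respect to $u$, obtaining $\partial_u F_i = \partial_u\mathcal{P}_i + \sum_j \partial_{F_j}\mathcal{P}_i\cdot\partial_u F_j$. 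Treating the $\partial_u F_j$ as unknowns, this is a linear system whose matrix is $I - t\,J$ with $J$ a Jacobian; seeking a series $U = U(t)$ with $U(0)=a$ that makes this matrix singular, i.e. $\det(I - tJ)|_{u=U} = 0$, one expects (generically) $N := \deg_{z}$-many such branches $U^{(1)},\dots,U^{(N)}$, algebraic over $\K(t)$, by the implicit function theorem / Newton polygon once one checks the determinant is not identically zero in $u$. (3) For each such branch, the singularity forces a compatibility condition on the right-hand side (the "numerator vanishes"), giving extra polynomial equations relating the $F_i(t,U^{(\ell)})$, the $d_{i,j}$, and $U^{(\ell)}$. (4) Count: one has unknowns $F_i(t,U^{(\ell)})$, $\partial_u F_i(t,U^{(\ell)})$ (finitely many per branch) and the $nk$ specializations $d_{i,j}$; the original equations evaluated at $u = U^{(\ell)}$, their $u$-derivatives, and the kernel-vanishing conditions should supply matching many equations. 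Solving this polynomial system over $\overline{\K(t)}$ exhibits each $d_{i,j}$ — and then, by back-substitution into $F_i = \mathcal{P}_i$, each $F_i(t,u)$ — as algebraic over $\K(t)$, resp. $\K(t,u)$.

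The crux, and where I expect to spend the most effort, is the deformation argument guaranteeing the constructed polynomial system has the right dimension. As in \cite[Section~2]{BMJ06}, a naive substitution may yield a positive-dimensional or even trivial variety; one needs a parametrized family (a symbolic perturbation of the $Q_i$, or the introduction of auxiliary catalytic parameters) for which the generic member is provably zero-dimensional with the desired solution as an isolated point, and then specialize. In the vector setting the subtlety is that the Jacobian $I - tJ$ must be generically nonsingular in $u$ yet singular along the branches — this requires the "ellipticity"/non-degeneracy of the system, which should follow from the fixed-point shape ($F_i$ appears with coefficient $1$ on the left and everything else is $t$ times a polynomial), but making this rigorous for all $n$ and $k$ simultaneously is the delicate point. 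Once that is in place, the effective algebraicity-degree bounds and the algorithm follow by tracking B\'ezout-type bounds through the elimination, exactly as announced in the abstract.
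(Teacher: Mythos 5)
Your outline follows the same route as the paper: polynomialize the system, differentiate in $u$, locate the branches $U^{(\ell)}$ where the Jacobian determinant vanishes, extract the compatibility conditions, duplicate variables, and invoke a deformation to force zero-dimensionality. The existence/uniqueness part and steps (1)--(3) are fine. The genuine gap is that the step you yourself flag as the crux is not carried out, and it cannot be dispatched by the counting argument you propose in step (4). Having "matching many equations and unknowns" does not give zero-dimensionality; one must verify the Jacobian criterion for the \emph{duplicated} system at the point whose coordinates are the $F_i(t,U^{(\ell)})$, $U^{(\ell)}$ and the $nk$ specializations $d_{i,j}$. That Jacobian is not block-diagonal: the columns corresponding to the $d_{i,j}$ couple all $nk$ duplicated copies, so invertibility splits into two separate claims --- (i) each $(n+1)\times(n+1)$ block built from $\partial_{x_j}E_\ell$, $\partial_u E_\ell$, $\Det$ and the compatibility polynomial $P$ evaluated at $U^{(\ell)}$ is invertible, and (ii) an auxiliary $nk\times nk$ matrix $\Lambda$, whose $(i,j)$ entry is a certain $n\times n$ minor involving $\partial_{z_j}E_\ell$ evaluated at $U^{(i)}$, is invertible. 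Claim (ii) is the genuinely new phenomenon for $n>1$ (for $n=1$ it degenerates to a Vandermonde-type statement) and your proposal does not mention it.

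Moreover, "a symbolic perturbation of the $Q_i$ for which the generic member is provably zero-dimensional" is not something one can appeal to abstractly: the whole difficulty is to exhibit one explicit deformation for which (i) and (ii) can actually be checked. The paper does this by replacing $t$ by $t^{\alpha}$ inside the $Q_i$ and adding $t\epsilon^{k}\sum_i\gamma_{j,i}\Delta^kG_i$ with $\gamma_{i,i}=i^k$, $\gamma_{i,j}=t^{\beta}$ for $i\neq j$, and with $\alpha,\beta$ chosen large in terms of $n,k$ and the pole orders $m_i$. This makes $\Det(u)\bmod t^{n+1}$ equal to $u^{M-nk}\prod_{j=1}^n(-u^k+t\epsilon^kj^k)$, whence exactly $nk$ distinct nonzero Puiseux branches, and it separates the $t$-valuations of all entries of the Jacobian so that in each of (i) and (ii) a single diagonal-type term has strictly minimal valuation $<\alpha$ and cannot cancel. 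Without an explicit deformation of this kind and the accompanying valuation bookkeeping, the zero-dimensionality claim --- and hence the algebraicity conclusion --- remains unproved.
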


The key idea, analogous to the one in the proof of~\cite[Theorem~$3$]{BMJ06}, for proving this theorem is to define a deformation of (\ref{eqn:init_system}) that ensures the applicability of a multi-dimensional analog of the ``nonlinear kernel method''. Stated explicitly, we show in \cref{lem:det_sols} that after deforming the equations as in (\ref{eqn:deformed_system}), the polynomial in $u$ defined by
the determinant of the Jacobian matrix associated to the equations in~\eqref{eqn:init_system} (considered with respect to the $F_i$)
has exactly $nk$ solutions in an extension of the ring $\bigcup_{d \geq 1}\overline{\K}[[t^{1/d}]]$. After a process of ``duplication of variables'', we construct a zero-dimensional polynomial ideal, a non-trivial element of which must be the desired annihilating polynomial. The most technical step consists in proving the invertibility of a certain Jacobian matrix (\cref{lem:det_Jaci} and \cref{lem:Lambda}) in order to justify the zero-dimensionality.

The second contribution is the analysis of the resulting algorithm for finding annihilating polynomials of the power series $F_i(t,u)$ in \cref{thm:main_thm}. From our constructive proof we deduce a theoretical upper bound on the algebraicity degree of each $F_i$. Moreover using the radicality of the constructed~$0$-dimensional ideal, and when the field~$\mathbb{K}$ is effective (e.g.~$\mathbb{K} = \mathbb{Q}$), we also bound the arithmetic complexity of our algorithm, that is the number of operations~$(+, -, \times, \div)$ performed in~$\mathbb{K}$. Denoting by $\operatorname{totdeg}(P)$ the total degree of a multivariate polynomial $P$, we obtain the following:
\begin{thm}\label{thm:quantitative_estimates}
In the setting of \cref{thm:main_thm}, let $(F_1, \ldots, F_n)\in\K[u][[t]]^n$  be the vector of solutions and $\delta := \max(\deg(f_1),\dots,\deg(f_n), \operatorname{totdeg}(Q_1),\dots,\operatorname{totdeg}(Q_n))$. Then the algebraicity degree of each $F_i(t, u)$ over~$\mathbb{K}(t, u)$ is bounded by $        {n^{2n^2k^2}(k+1)^{n^2k^2(n+2)+n}\delta^{n^2k^2(n+2)+n}}/{(nk)!^{nk}}$.
    Moreover if~$\mathbb{K}$ is effective,
    there exists an algorithm computing an annihilating polynomial of any $F_i(t, a)$ in $O((nk\delta)^{40(n^2k+1)})$
    arithmetic operations in $\K$.
\end{thm}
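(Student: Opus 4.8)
The plan is to establish \cref{thm:quantitative_estimates} in two stages: first the bound on the algebraicity degree, then the complexity of the algorithm. For the degree bound, I would follow the constructive proof of \cref{thm:main_thm} and carefully track the dimensions and degrees of the objects it produces. The key is that the proof exhibits $F_1(t,a),\dots,F_n(t,a)$ together with the $nk$ ``extra'' solutions $U_1,\dots,U_{nk}$ of the Jacobian-determinant equation as coordinates of a point on a $0$-dimensional variety cut out by an explicit polynomial system, obtained after duplication of variables. I would count the number of variables in that system (which will be roughly $n + n^2 k + nk$-ish, coming from the $n$ specialized series, the duplicated block of size $\sim n^2 k$, and the $nk$ unknowns $U_j$), and bound the degrees of the defining equations in terms of $\delta$ and $k+1$ (the $k+1$ arising from the order-$k$ discrete-derivative structure, which inflates degrees when one clears denominators in $\Delta_a^\ell$). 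Then a Bézout-type bound on the degree of the $0$-dimensional ideal, followed by elimination of all variables except the one carrying $F_i(t,a)$, yields a bound on the degree of the annihilating polynomial of $F_i(t,a)$; since $F_i(t,u)$ itself satisfies \eqref{eqn:init_system}, one recovers an annihilating polynomial for the bivariate series by the same elimination applied to a parametrized version, giving the stated bound with the $(nk)!^{nk}$ denominator coming from a refined multihomogeneous Bézout count that exploits the block structure of the system.

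For the complexity statement, I would invoke that $\K$ effective means the four field operations are available, and that the ingredients of the constructive proof are all effective: forming the deformed system \eqref{eqn:deformed_system}, computing the Jacobian determinant, performing the duplication of variables, and finally eliminating. The dominant cost is the elimination step on the $0$-dimensional system. Here I would appeal to the radicality of the constructed ideal (established earlier, via the invertibility of the Jacobian in \cref{lem:det_Jaci}/\cref{lem:Lambda}) so that one may use a geometric-resolution / Gröbner-free algorithm, or a Gröbner basis computation for a zero-dimensional radical ideal, whose complexity is polynomial in the Bézout number $D$ of the system and in the number of variables. Plugging in $D = (nk\delta)^{O(1)}$-scale bounds from the first part and the $O(n^2 k)$-scale variable count, and using that zero-dimensional elimination costs $D^{O(\text{number of variables})}$ in the worst case, produces a bound of the shape $(nk\delta)^{c(n^2 k+1)}$; tracking constants through the chosen elimination routine gives the explicit exponent $40$.

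I expect the main obstacle to be the degree bookkeeping in the first part: the duplication-of-variables construction multiplies the ambient dimension, and one must be careful that the intermediate ideals genuinely remain zero-dimensional (guaranteed by \cref{lem:det_Jaci} and \cref{lem:Lambda}) so that Bézout-type bounds apply rather than merely bounding a possibly-positive-dimensional component. A secondary subtlety is passing from a bound on the algebraicity degree of the specialization $F_i(t,a)$ to one for the full bivariate series $F_i(t,u)$: one should argue that $u$ can be treated as a parameter (or as an additional specialization point $a$ ranging over the generic value $u$), so that the same system, now with $u$ free, still has the right dimension and the elimination still terminates — this is where keeping $a$ symbolic in the deformation pays off. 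Finally, for the complexity, the delicate point is choosing an elimination algorithm whose complexity bound is stated in a form that depends only on $D$ and the number of variables (and not, say, on the height of coefficients, which we do not control), and justifying that the radicality hypothesis is exactly what lets us use such a routine with the claimed exponent.
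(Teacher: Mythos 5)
Your overall plan (track degrees through the constructive proof of \cref{thm:main_thm}, apply a B\'ezout-type bound to the zero-dimensional radical saturated ideal from \cref{lem:jac_ideal}, then use a geometric-resolution-style elimination for the complexity) is the same as the paper's. However, two of the key mechanisms you invoke are not the ones that actually produce the stated bound, and as described they leave genuine gaps. First, the factor $(nk)!^{nk}$ does not come from a ``refined multihomogeneous B\'ezout count.'' The paper first bounds the algebraicity degree of each \emph{specialization} $\partial_u^iG_j(0)$ by the Heintz--B\'ezout number $\gamma(n,k,\delta)=n^{2nk}(\delta(k+1))^{nk(n+2)}$ of the duplicated system, and then divides by $(nk)!$ using the action of the symmetric group $\mathfrak{S}_{nk}$ that permutes the $nk$ duplicated blocks of coordinates while preserving the $\{t,z_0,\dots,z_{nk-1}\}$-coordinate space (so one spares the cardinality of the orbits). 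A multihomogeneous B\'ezout count is a plausible-sounding substitute, but you would have to actually exhibit the multidegree structure and verify it yields exactly $1/(nk)!$ per block; the symmetrization argument is what the paper relies on, and your proposal does not supply an alternative justification.

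Second, and more seriously, your passage from $F_i(t,a)$ to the bivariate series $F_i(t,u)$ --- treating $u$ as a parameter or as a ``generic value of $a$'' and re-running the elimination --- is not what the paper does and would not obviously produce the bound in the stated form. The deformation in the proof of \cref{thm:main_thm} is built with $a=0$ fixed, and the whole duplicated system lives over $\K(t,\epsilon)$; letting $a$ range generically would change the structure of \cref{lem:det_sols} and of the valuation arguments. The paper instead works in the tower $\K(t,u,\epsilon)\subset\K(t,u,\epsilon,G_1(0),\dots,\partial_u^{k-1}G_n(0))$: once all $nk$ specializations are algebraic with degree at most $\gamma(n,k,\delta)/(nk)!$ each, the extension they generate has degree at most $\bigl(\gamma(n,k,\delta)/(nk)!\bigr)^{nk}$, and over that extension the original equations $E_1,\dots,E_n$ (with the $z$-variables specialized) define $G_i(u)$ with an invertible Jacobian in the $x$-variables, contributing the remaining factor $\delta^n(k+1)^n$. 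This multiplicativity of degrees in the tower is precisely what produces the outer exponent $nk$ in the theorem's bound; your route gives no account of it. On the complexity side your proposal is essentially correct in spirit (parametric geometric resolution over the parameters $t,\epsilon$, exploiting radicality, followed by a resultant and squarefree part), but note that you need the cost to be \emph{polynomial} in the B\'ezout number $\nu(n,k,\delta)$ (as in Schost's $\tilde{O}(L\cdot\nu^3)$-type bound); your fallback estimate $D^{O(\text{number of variables})}$ would give an exponent of order $n^4k^2$ rather than the claimed $O(n^2k)$.
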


\textcolor{black}{Let us emphasize that despite the desperately looking exponent~$40$ in~\cref{thm:quantitative_estimates}, 
one can still solve concrete examples from time to time as we shall see in~\cref{sec:generic_equations_of_first_order} and~\cref{sec:practical_aspects}.}

The third contribution is the full algorithmic investigation of two natural schemes for solving systems of~DDEs. For each of them, we analyze the conditions under which they might be applied, and the possible links between their respective outputs. The first algorithm consists in the classical duplication of variables argument, by following our proof of~\cref{thm:main_thm}, and then performing a brute force elimination of all irrelevant variables (\cref{prop:elimdup}). The second scheme consists in reducing the initial system of DDEs to a single polynomial functional equation where the general method of~\cite[Section~$2$]{BMJ06} and the recent algorithmic improvements made in~\cite[Section~$5$]{BoNoSa23} might apply. In this direction, we identify in~\cref{sec:elim_duplicate} sufficient conditions under which~\cite[Section~$2$]{BMJ06} and~\cite[Section~$5$]{BoNoSa23} can systematically be used. At the end of~\cref{sec:elim_duplicate}, we show that~\cref{eq:eq27} can not be solved by the state-of-the-art based on reducing a system of DDEs to a single equation (and then in applying any systematic method), while it can be solved by the systematic method that we introduce in~\cref{sec:generic_equations_of_first_order}.

\paragraph{Structure of the paper:} In \cref{sec:generic_equations_of_first_order}, we explain our method in the case of two equations of order one under the assumption that no deformation is necessary. We summarize the method in an algorithm and showcase it explicitly on~\cref{ex:eq27}. In \cref{sec:proof_thm}, we provide proofs of~\cref{thm:main_thm} and~\cref{thm:quantitative_estimates} with more details than in the extended abstract~\cite{NoYu23}.
In~\cref{sec:practical_aspects}, we study and compare the output of two natural strategies for solving systems of DDEs. 
Ultimately, we discuss some necessary future works in~\cref{sec:discussions}.

\paragraph{Notations:} Throughout this article,~$\K$ denotes a field of characteristic~$0$,~$\overline{\K}$ its algebraic closure,~$\K[[t]]$ the ring of formal power series in~$t$ with coefficients in~$\K$, and~$\overline{\K}[[t^\frac{1}{\star}]]$ the ring $\bigcup_{d\geq 1}\overline{\K}[[t^{\frac{1}{d}}]]$ of Puiseux series with rational positive exponent in the variable~$t$. Also for $n, k\geq 1$ and~$a\in\K$, we use the compact notation~$A(u)$ for any given polynomial expression of~the form~$A(F_1(t, u), F_1(t, a), \ldots, \partial_u^{k-1}F_1(t, a), \ldots, F_n(t, u), F_n(t, a), \ldots, \partial_u^{k-1}F_n(t, a), t, u)$. For an integer~$N>1$, we denote by~$\K[x_1, \ldots, x_N]$ the ring of polynomials in the variables~$x_1,\ldots, x_N$ with coefficients in~$\K$. For~$P\in\K[x_1, \ldots, x_N]$, we denote by~$\partial_{x_i}P$ the partial derivative of~$P$ with respect to the variable~$x_i$ (for~$i\in\{1, \ldots, N\}$) and by $V(P)$ its zero set in~$\overline{\K}^N$. For~$\mathcal{I}$ an ideal of~$\K[x_1, \ldots, x_N]$, we also denote by~$V(\mathcal{I})$ the zero set of~$\mathcal{I}$ in~$\overline{\K}^N$. Also, for~$\mathcal{S}$ a set of polynomials in~$\K[x_1, \ldots, x_N]$, we denote by~$V(\mathcal{S})$ the zero set of the ideal generated by~$\mathcal{S}$ in~$\K[x_1, \ldots, x_N]$. For $\mathbf{x}, \mathbf{y}$ two sets of variables, we denote~$\{\mathbf{x}\}\succ_{\lex}\{\mathbf{y}\}$ the monomial order such that~$\{\mathbf{x}\}$ (resp.~$\{\mathbf{y}\}$) is the usual degrevlex~\cite[Definition~$5$, \S~$2$, Chapter~$2$]{CoLiOSh15} order over~$\mathbf{x}$ (resp. over~$\mathbf{y}$), and such that any monomial in~$\mathbf{y}$ is lower than any monomial containing at least one variable in~$\mathbf{x}$ (by default~$\{\mathbf{x}\}$ will always denote the usual degrevlex monomial order on the variables~$\mathbf{x}$). 
We use the soft-O notation $\tilde{O}(\cdot)$ for hiding polylogarithmic factors in the argument.

\section{General strategy and application to a first example} \label{sec:generic_equations_of_first_order}

Before proving our main theorem in~\cref{sec:proof_thm}, we introduce our general method in the situation of two equations
of order~$1$. We illustrate each step with the system~\eqref{eq:eq27} from~\cref{ex:eq27}.

Starting with~\eqref{eqn:init_system} for~$n=2$ and~$k=1$, we first multiply \textbf{(E$_{\textbf{F}_1}$)} and \textbf{(E$_{\textbf{F}_2}$)} by $(u-a)^{m_1}$ and $(u-a)^{m_2}$ respectively (for $m_1, m_2\in~\mathbb{N}$) in order to obtain a system with polynomial coefficients in~$u$. By a slight abuse of notation, we shall still write \textbf{(E$_{\textbf{F}_1}$)} and \textbf{(E$_{\textbf{F}_2}$)} for those equations. Note that this system induces polynomials $E_1, E_2$ in~$\K(t)[x_1, x_2, z_0, z_1, u]$ whose specializations
to $x_1 = F_1(t, u), x_2 = F_2(t, u), z_0 = F_1(t, a), z_1 = F_2(t, a)$, denoted by~$E_1(u), E_2(u)$, are zero.

\begin{ex1}\label{ex:cont1_toy}Multiplying \textbf{(E$_{\textbf{F}_1}$)} and \textbf{(E$_{\textbf{F}_2}$)} in~\cref{ex:eq27}
by~$u-1$ gives 
\begin{equation*}
    \begin{cases}
    E_1 =  (1 - x_1)\cdot(u-1)+ t\cdot (2u^2x_1^2 - u^2z_0 + 2u^2z_1 - 2ux_1^2 + u^2 + ux_1 - 2uz_1 - u),\\
    E_2 =  x_2\cdot(1-u) + t\cdot (2u^2x_1x_2 + u^2x_1 - 2ux_1x_2 - ux_1 + ux_2 - uz_1).
    \end{cases}
\end{equation*}
Note that applying the specializations $x_1 = F_1(t, u), x_2 = F_2(t, u), z_0 = F_1(t, 1), z_1 = F_2(t, 1)$ to~$E_1$ and~$E_2$ yields the vanishing of the induced polynomial functional equations.
\end{ex1}
\noindent
In the spirit of~\cite{BMJ06}, we take the derivative of both equations with respect to the variable~$u$:
\begin{equation}\label{eqn:starting_point}
    \begin{pmatrix}
    (\partial_{x_1}E_1)(u) & (\partial_{x_2}E_1)(u)\\
    (\partial_{x_1}E_2)(u) & (\partial_{x_2}E_2)(u)
    \end{pmatrix}
    \cdot \begin{pmatrix}
    \partial_u F_1 \\
    \partial_u F_2
    \end{pmatrix} 
     +\begin{pmatrix}
    (\partial_u E_1)(u)\\
    (\partial_u E_2)(u)
    \end{pmatrix} = 0,
\end{equation}
Define $\Det:=\partial_{x_1}E_1 \cdot \partial_{x_2}E_2 - \partial_{x_1}E_2 \cdot \partial_{x_2}E_1 \in
\K(t)[x_1,x_2,z_0,z_1,u]$.
One can show that the specialization
$\Det(F_1(t, u), F_2(t, u), F_1(t, a), F_2(t, a), t, u)\in\K[[t]][[u]]$ admits either~$0, 1$ or $2$ distinct non-constant solutions in $u$ in~$\overline{\K}[[t^{\frac{1}{\star}}]]$. We assume that there exist $2$ such solutions $U_1, U_2 \in \overline{\K}[[t^{\frac{1}{\star}}]]$; we prove in~\cref{sec:proof_thm} that it is always the case up to the deformation (\ref{eqn:deformed_system}).

Exploiting the common idea to~\cite[Proof of~Theorem~$3.2$]{ABBG20} and~\cite[Section~$3$]{BK20}, we define the vector~$v := (\partial_{x_1}E_2, \;\; - \partial_{x_1}E_1) \in \K(t)[x_1,x_2,z_0,z_1,u]^2$ and plug $U_1$ for $u$ into~$v$ and~\eqref{eqn:starting_point}. Note that~$v$ is an element of the left-kernel of the square matrix in~\eqref{eqn:starting_point} $\bmod$ $\Det(x_1, x_2, z_0, z_1, u)$. After multiplication of \eqref{eqn:starting_point} by~$v$ on the left, we find a new polynomial relation relating the series $F_1(t, U_i), F_2(t, U_i), F_1(t, a), F_2(t, a), t$ and~$U_i$, namely $ \partial_{x_1} E_1 \cdot\partial_{u}E_2 - \partial_{x_1}E_2\cdot \partial_{u}E_1 = 0$ when evaluated at $x_1 = F_1(t, U_i), x_2 = F_2(t, U_i), z_0=F_1(t, a), z_1 = F_2(t, a), u = U_i$. We denote~$P:=  \partial_{x_1} E_1 \cdot\partial_{u}E_2 - \partial_{x_1}E_2\cdot \partial_{u}E_1 \in \K(t)[x_1, x_2, z_0, z_1, u]$ this new polynomial.

    Note that $P$ is the determinant of the matrix $\begin{pmatrix} \partial_{x_1}E_1 & \partial_{x_1} E_2 \\ \partial_{u}E_1 & \partial_{u} E_2\end{pmatrix}$, which is not a coincidence, as we will see in the next section.

We define the polynomial
system~$\mathcal{S} := (E_1, E_2, \Det, P)\in\K(t)[x_1, x_2, z_0, z_1, u]^4$. It admits the relevant solutions $(F_1(t, U_i), F_2(t, U_i), F_1(t, a), F_2(t, a), U_i)\in\overline{\mathbb{K}}[[t^{\frac{1}{\star}}]]^5$, for $i \in\{1, 2\}$.

\begin{ex1}Continuing \cref{ex:eq27}, we find 
\begin{equation*}
    \begin{cases}
    \Det = (4tu^2x_1 - 4tux_1 + tu - u + 1)(2tu^2x_1 - 2tux_1 + tu - u + 1),\\
    P = -2tx_1x_2 - tx_1 + tx_2 - tz_1 - x_2 + P_1\cdot u
     + P_2\cdot u^2 + P_3\cdot u^3, 
    \end{cases}
\end{equation*}
where $P_1, P_2, P_3$ are explicit (but relatively big) polynomials in~$\mathbb{Q}[x_1, x_2, z_0, z_1, t]$.
\end{ex1}

Applying in spirit the steps of~\cite[Section~$2$]{BMJ06}, we define for $i \in \{0,1\}$ the polynomial systems $\mathcal{S}_i := \mathcal{S}(x_{2i+1},\; x_{2i+2},  \;z_0, \;z_1, \;u_{i+1})$ by ``duplicating'' variables. Now observe that as~$U_1, U_2$ are assumed to be distinct, there exists a Puiseux series~$m(t)\in\K[[t^{\frac{1}{\star}}]]$ solution in~$m$ (where~$m$ is a new variable)
of the equation~$m\cdot (U_1-U_2)-1=0$. Introducing this new variable~$m$ allows us to encode in a polynomial equation the inequation~$U_1\neq U_2$, namely we will also consider $m\cdot(u_1 - u_2) - 1$.

Now in the case where the ideal $\mathcal{S}_{{{\operatorname{dup}}}}:=\langle \mathcal{S}_0, \mathcal{S}_1, m\cdot(u_1 - u_2) - 1\rangle $ has dimension~$0$ over~$\K(t)$, that is if the polynomial system 
\begin{align*}
     \{E_1(x_{1}, x_{2}, z_0, z_1, u_{1})&=0,
   E_2(x_{1}, x_{2}, z_0, z_1, u_{1})=0,\\
    \Det(x_{1}, x_{2}, z_0, z_1, u_{1})&=0,
    P(x_{1}, x_{2}, z_0, z_1, u_{1})=0,\\
    E_1(x_{3}, x_{4}, z_0, z_1, u_{2})&=0,
   E_2(x_{3}, x_{4}, z_0, z_1, u_{2})=0,\\
    \Det(x_{3}, x_{4}, z_0, z_1, u_{2})&=0,
    P(x_{3}, x_{4}, z_0, z_1, u_{2})=0,
    m\cdot (u_1-u_2)-1=0\}
\end{align*}
has finitely many solutions in~$\overline{\K}[[t^{\frac{1}{\star}}]]$,
finding an annihilating polynomial of $F_1(t,a)$ is done by computing a nonzero element of $\langle \mathcal{S}_0, \mathcal{S}_1, m\cdot(u_1 - u_2) -1\rangle  \cap \K[z_0, t]$. In general, an ideal over $\K(t)$ is \textit{zero-dimensional} if the underlying system of polynomial equations has finitely many solutions~\cite[Thm.$6$, \S$3$, Chap.$5$]{CoLiOSh15}.
\begin{ex1}Continuing Example~\ref{ex:eq27}, we compute\footnote{All computations in this paper have been performed in Maple using the C library msolve~\cite{msolve}.} a generator of the polynomial ideal
  $\langle \mathcal{S}_0, \mathcal{S}_1, m\cdot(u_1 - u_2) -1\rangle \cap \Q[z_0, t]$.
  It has degree $13$ in $z_0$ and~$14$ in $t$. In particular, it contains among its factors
  the minimal polynomial of $F_1(t, 1)$ given by 
  $64t^3z_0^3 + (48t^3 - 72t^2 + 2t)z_0^2 - (15t^3 - 9t^2 - 19t + 1)z_0 + t^3 + 27t^2 - 19t + 1$.
\end{ex1}

We summarize the algorithm described above in the compact form given by Algorithm~\ref{algo:algo1}.~\begin{algorithm}[!ht]
\DontPrintSemicolon
  \KwInput{A system of two DDEs \textbf{(E$_{\textbf{F}_1}$)}, \textbf{(E$_{\textbf{F}_2}$)} of order~$1$, with~$\mathcal{S}_{{{\operatorname{dup}}}}$ of dimension~$0$ over~$\K(t)$.}
  \KwOutput{A nonzero $R\in\K[z_0, t]$ annihilating $F_1(t, a)$.}
  Replace \textbf{(E$_{\textbf{F}_1}$)} and \textbf{(E$_{\textbf{F}_2}$)} by their respective numerators and denote by $E_1$ and $E_2$ the associated polynomials in~$\K(t)[x_1, x_2, z_0, z_1, u]$.\;
  Compute $\Det:=\partial_{x_1}E_1 \cdot \partial_{x_2}E_2 - \partial_{x_1}E_2 \cdot \partial_{x_2}E_1$ and $P := \partial_{x_1}E_1 \cdot \partial_u E_2 - \partial_{x_1}E_2\cdot \partial_u E_1 $.\;
  Set $\mathcal{S} := (E_1, E_2, \Det, P)\subset\K(t)[x_1, x_2, z_0, z_1, u]$.\;
  For $0\leq i \leq 1$, define $\mathcal{S}_i := \mathcal{S}(x_{2i+1},\; x_{2i+2},  \;z_0, \;z_1, \;u_{i+1})$.\;
  Define $\Sdup := \langle \mathcal{S}_0, \mathcal{S}_1, m\cdot(u_1 - u_
  2) -1\rangle\subset\K(t)[m, x_1, x_2, x_3, x_4, z_0, z_1, u_1, u_2]$,\;
  \textbf{Return} a nonzero element of 
  $\Sdup \cap\; \K[z_0, t]$.
\caption{\label{algo:algo1}  Solving systems of two discrete differential equations of order~$1$.}
\end{algorithm}

We remark that if the same strategy as above is applied in the case of a single equation of first order of the form
$F_1 = f(u) + t\cdot Q_1(F_1, \Delta_a F_1,  t, u)$, the presented method simplifies to the  classical method in~\cite{BMJ06} of Bousquet-Mélou and Jehanne which relies on studying the ideal $\langle E_1, \partial_{x_1} E_1, \partial_u E_1 \rangle$. Stated explicitly, $\partial_{x_1} E_1$ plays the role of $\Det$ and $\partial_uE_1$ plays the role of $P$. 

\section{Proofs of \cref{thm:main_thm} and \cref{thm:quantitative_estimates}} \label{sec:proof_thm}

\subsection{Proof of \cref{thm:main_thm}}\label{subsec:proof_main_thm}
As explained before, the statement and proof can be seen as a generalization of~\cite[Theorem~3]{BMJ06} and \cite[Theorem~2]{BK20}, so several steps are done analogously. Without loss of generality we assume that~$a=0$ and set $\Delta := \Delta_0$ and~$\nabla := \nabla_0$.

Denote by $m_1, \ldots, m_n$ the least positive integers greater than or equal to~$k$ such that multiplying~\textbf{($\textbf{E}_{\textbf{F}_{\textit{\textbf{i}}}}$)} in \eqref{eqn:init_system} by $u^{m_i}$ gives a polynomial equation; in other words, the multiplication by $u^{m_i}$ clears the denominators introduced by the application of $\Delta$. Set $\beta := \lfloor 2M/k \rfloor $ and $\alpha:= 3n^2k\cdot (\beta + 1) + 3nM$, where $M := m_1 + \cdots + m_n$. 
Let $\epsilon$ be a new variable, $\LL := \K(\epsilon)$, and let~$(\gamma_{i, j})_{1\leq i, j\leq n}$ be defined by $\gamma_{i, i} = i^k$ and $\gamma_{i, j} = t^\beta$ for $i\neq j$. Then, consider the following system which is a symbolic deformation of~\eqref{eqn:init_system} with respect to the deformation parameter~$\epsilon$:
\begin{equation}\label{eqn:deformed_system}
  \small  \begin{cases}
\textbf{\textup{\text{(E}}}_{\textbf{\textup{\text{G}}}_\textbf{\textup{\text{1}}}} \textbf{\textup{\text{):}}} \;\;\;G_1 = f_1(u) + t^{\alpha}\cdot Q_1(\nabla^kG_1, \nabla^kG_2, \ldots, \nabla^kG_n, t^{\alpha}, u)
    + t\cdot \epsilon^{k}\cdot \sum_{i = 1}^n \gamma_{1, i} \cdot \Delta^kG_i,\\
     \qquad \qquad \;\;\;\; \vdots \hfill \vdots \;\;\;\;\;\;\;\;\;\; \\
\textbf{\textup{\text{(E}}}_{\textbf{\textup{\text{G}}}_\textbf{{\text{\textit{n}}}}} \textbf{\textup{\text{):}}} \;\;\;G_n = f_n(u) + t^{\alpha}\cdot Q_n(\nabla^kG_1, \nabla^kG_2, \ldots, \nabla^kG_n, t^{\alpha}, u)
    + t\cdot \epsilon^{k}\cdot \sum_{i = 1}^n \gamma_{n, i}\cdot\Delta^kG_i.
    \end{cases}
\end{equation}
The fixed point nature of the above equations still implies the existence of a unique solution $(G_1, \ldots, G_n)\in\LL[u][[t]]^n$ (it can be seen by extracting the coefficient of~$t^m$ of each~$G_1, \ldots, G_n$).
Remark that the equalities $F_i(t^{\alpha}, u) = G_i(t, u, 0)$ relate the formal power series
solutions of~\eqref{eqn:init_system} and of~\eqref{eqn:deformed_system}.
Hence, showing that each $G_i$ is algebraic over $\LL(t, u)$ is enough to prove \cref{thm:main_thm}. Moreover, as we will see later, the algebraicity of each $G_i$ follows from the algebraicity of the series~$G_1(0), \ldots, \partial_u^{k-1}G_1(0), \ldots, G_n(0), \ldots, \partial_u^{k-1}G_n(0)$. Here, and in what follows, we shall use the short notations \[\partial_0G_i(u)\equiv\left(G_i(u), G_i(0), \partial_u G_i(0), \ldots, \partial_u^{k-1}G_i(0)\right)\] and $A(u)\equiv A(\partial_0G_1, \ldots, \partial_0G_n, t, u)$ for any polynomial $A\in\LL[X_1, \ldots, X_n, t, u]$ with $X_j := x_j, z_{k(j-1)}, z_{k(j-1)+1}, \ldots, z_{kj-1}$. Note that in the case~$n=1$, this notation implies that for any~$0\leq i \leq k-1$, the variable $z_i$ stands for~$\partial_u^iF_1(t, a)$.

Let us define $Y_{i, 0} := x_i$ and $Y_{i, j} := ({x_i - z_{k(i-1)} - \cdots - \frac{u^{j-1}}{(j-1)!}z_{k(i-1)+j-1})/u^j}$ for $1\leq i \leq n$ and~$1\leq j \leq k$.
With these definitions, multiplying each~\textbf{($\textbf{E}_{\textbf{F}_{\textit{\textbf{i}}}}$)} in \eqref{eqn:init_system} by $u^{m_i}$ and substituting the  series~$G_i$'s and their specializations by their associated variables yields the following system of polynomial equations
\begin{equation}\label{eq:E_i's}
 \footnotesize   \begin{cases}
    E_1 := u^{m_1}\cdot (f_1(u) - x_1 + t^{\alpha}\cdot Q_1(Y_{1, 0}, \ldots, Y_{1, k},Y_{2,0}, \ldots, Y_{n, k}, t^{\alpha}, u) 
    + t\cdot \epsilon^{k}\cdot\sum_{i = 1}^n \gamma_{1, i}\cdot Y_{i, k}) = 0 ,\\
    \indent \;\vdots \hfill \vdots\;\;\;\; \\
    E_n :=  u^{m_n}\cdot (f_n(u) - x_n + t^{\alpha}\cdot Q_n(Y_{1, 0}, \ldots, Y_{1, k},Y_{2,0}, \ldots, Y_{n, k}, t^{\alpha}, u)
    + t\cdot \epsilon^{k}\cdot\sum_{i = 1}^n \gamma_{n, i}\cdot Y_{i, k}) = 0.
    \end{cases}
\end{equation}
Like in (\ref{eqn:starting_point}), we take the derivative with respect to the variable~$u$ of these equations and find
\begin{equation}\label{eqn:En_diff_u}
    \begin{pmatrix}
    (\partial_{x_1}E_1)(u) & \dots & (\partial_{x_n}E_1)(u)\\
    \vdots & \ddots & \vdots \\ 
    (\partial_{x_1}E_n)(u) & \dots  & (\partial_{x_n}E_n)(u)
    \end{pmatrix}
    \cdot \begin{pmatrix}
    \partial_u G_1 \\
    \vdots \\
    \partial_u G_n
    \end{pmatrix} 
    + \begin{pmatrix}
    (\partial_u E_1)(u)\\
    \vdots\\
    (\partial_u E_n)(u)  
    \end{pmatrix} =0. 
\end{equation}
Let $\Det \in \LL[X_1,\dots,X_n,t][u]$ be the determinant of the square matrix $(\partial_{x_j}E_i)_{1\leq i,j \leq n}$. The following lemma gives the number of distinct relevant solutions in~$u$ to the equation~$\Det(u) = 0$.
\begin{lem}\label{lem:det_sols}
    $\Det(u) = 0$ admits exactly $nk$ distinct nonzero solutions
    $U_1, \ldots, U_{nk}\in \overline{\LL}[[t^{\frac{1}{\star}}]]$. 
\end{lem}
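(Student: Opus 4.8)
The plan is to analyze the structure of $\Det(u)$ as a polynomial in $u$ with coefficients that are (after the specialization $x_i = G_i(t,u)$, $z_j = \partial_u^j G_i(0)$) Puiseux series in $t$, and to count its nonzero roots with multiplicity in $\overline{\LL}[[t^{1/\star}]]$ by a valuation / Newton-polygon argument driven by the deformation parameters $\epsilon$, $\alpha$, $\beta$, $\gamma_{i,j}$. The first step is to write $E_i = u^{m_i}(f_i(u) - x_i) + u^{m_i} t \epsilon^k \sum_j \gamma_{i,j} Y_{j,k} + u^{m_i} t^\alpha(\cdots)$ and compute $\partial_{x_j} E_i$ explicitly. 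Since $Y_{j,k} = (x_j - z_{k(j-1)} - \cdots - \frac{u^{k-1}}{(k-1)!} z_{kj-1})/u^k$, we have $\partial_{x_j} Y_{j,k} = u^{-k}$, so the ``dominant'' contribution to $\partial_{x_j} E_i$ (the one surviving when $t$ is small but $\epsilon$ is generic) is $u^{m_i - k} t \epsilon^k \gamma_{i,j}$, up to the $-u^{m_i}\delta_{ij}$ term coming from $\partial_{x_j}(-x_i)$ and the $t^\alpha$-terms which are of much higher $t$-valuation because $\alpha$ is large. The key point of the deformation is that the matrix $(\gamma_{i,j})$ has been chosen so that $(\gamma_{i,j}) = \operatorname{diag}(1^k, 2^k, \ldots, n^k) + t^\beta(J - I)$ where $J$ is all-ones; the off-diagonal entries carry a factor $t^\beta$ precisely so that the leading behavior is governed by the diagonal matrix with distinct entries $i^k$.

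The second step is the root count. After multiplying through to clear the $u^{-k}$ denominators, $\Det(u)$ becomes (up to a unit power of $u$) the determinant of an $n\times n$ matrix whose $(i,j)$ entry, evaluated at the series solution, is $-u^{m_i}\delta_{ij} + u^{m_i-k} t\epsilon^k \gamma_{i,j} + (\text{terms of }t\text{-valuation} \geq \alpha)$. I would factor out $u^{m_i - k}$ from row $i$ and $t\epsilon^k$ overall, reducing to $\det\!\big(\gamma_{i,j} - \epsilon^{-k} t^{-1} u^k \delta_{ij} + O(t^{\alpha - 1})\big)$ times a monomial in $u$. Setting $w := \epsilon^{-k} t^{-1} u^k$, the leading term (ignoring $t^\beta$ off-diagonals and $t^\alpha$ corrections) is $\prod_{i=1}^n (i^k - w)$, a polynomial of degree $n$ in $w$ with $n$ distinct simple roots $w = i^k$, $i = 1, \ldots, n$. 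Each root $w = i^k$ corresponds via $u^k = i^k \epsilon^k t$ to exactly $k$ distinct nonzero values of $u$ in $\overline{\LL}[[t^{1/\star}]]$ (the $k$-th roots, which are genuinely distinct because their leading monomials $\zeta\, i\,\epsilon\, t^{1/k}$ differ by $k$-th roots of unity $\zeta$). That gives $nk$ candidate roots; one then argues these account for the full $u$-degree of the relevant factor of $\Det$ and that the $O(t^\beta)$ and $O(t^\alpha)$ perturbations, being of strictly positive $t$-valuation relative to the leading part, do not merge roots nor create new ones at valuation $0$ in $w$ — a standard Hensel/Newton-polygon lifting argument, where the choice $\alpha = 3n^2k(\beta+1) + 3nM$ and $\beta = \lfloor 2M/k\rfloor$ is exactly calibrated so the corrections land strictly below the leading edge of the Newton polygon. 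One must also check that $u = 0$ is not among the relevant solutions (it is excluded by the statement) and separate off the trivial $u$-power factor; the non-vanishing of the series solutions $G_i(t, U_\ell)$ etc. that makes these ``relevant'' follows since each $U_\ell$ has positive $t$-valuation so the substitution is well-defined in the Puiseux ring.

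The main obstacle, I expect, is controlling the contribution of the genuine polynomial part $t^\alpha Q_i$ and the internal $u$-dependence of $Y_{j,\ell}$ for $\ell < k$ hidden inside $Q_i$: after the specialization $x_i = G_i(t,u)$ these are not merely polynomials in $u$ but involve $\partial_u G_i$, so $\partial_{x_j} E_i$ and $\partial_u E_i$ pick up contributions through the chain rule, and one has to verify that all such contributions have $t$-valuation at least $\alpha$ (hence negligible) — this is where the precise inequality relating $\alpha$ to $\beta$ and $M$ is used, and it requires bookkeeping of how the discrete-derivative denominators $u^{m_i}$ interact with the valuations. A secondary subtlety is ensuring the $nk$ roots are \emph{distinct} as Puiseux series and not just distinct to leading order: because the $k$ branches over each $w = i^k$ differ already in their leading term, and the $n$ values $i^k$ are distinct, simplicity of each root of $\prod(i^k - w)$ guarantees (via the implicit function theorem over $\overline{\LL}[[t^{1/\star}]]$, which is Henselian) a unique Puiseux lift, so no two of the $nk$ roots coincide. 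I would close by remarking that $\Det(u)$, having degree in $u$ at least $nk$ after removing the $u$-power factor, has \emph{exactly} these $nk$ nonzero roots in the relevant sense because any further roots would have to be either $0$ or of a valuation incompatible with the Newton polygon computed above.
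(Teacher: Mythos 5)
Your proposal is correct and follows essentially the same route as the paper's proof: an explicit leading-order computation showing $\Det(u)=u^{M-nk}\prod_{j=1}^n(-u^k+t\epsilon^k j^k)$ up to corrections of higher $t$-valuation controlled by $\beta$ and $\alpha$, a Newton-polygon/Hensel argument producing $nk$ distinct nonzero Puiseux roots with leading terms $\zeta^\ell j\,\epsilon\, t^{1/k}$, and an upper-bound step (the paper cites \cite[Theorem~2]{BMJ06}, you invoke the Newton polygon of $\Det(u)\in\LL[u][[t]]$ directly) ruling out further nonzero roots. The only caveat is that your anticipated ``chain-rule'' obstacle is vacuous: $\partial_{x_j}E_i$ is the formal partial derivative of the polynomial $E_i$, taken \emph{before} the specialization $x_j=G_j(t,u)$, so no $\partial_u G_j$ terms ever enter $\Det$.
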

\begin{proof} Note that we have
\begin{equation*}
   \Det(u) = \det\begin{pmatrix}
    -u^{m_1} + t\epsilon^{k} \gamma_{1,1} u^{m_1-k} &  \cdots & t\epsilon^{k}\gamma_{1, n}u^{m_1-k}\\
    \vdots & \ddots & \vdots \\
    t\epsilon^{k}\gamma_{n, 1}u^{m_n-k}  &\cdots & -u^{m_n} + t\epsilon^{k}\gamma_{n, n}u^{m_n-k}
    \end{pmatrix}
     + O(t^\alpha u^{M-nk}).
\end{equation*}
For every $i$ we first divide the $i$th row by~$u^{m_i-k}$. 
Then, using the definition of $\gamma_{i,j}$ and $\alpha,\beta \geq n$, we see that the matrix above becomes diagonal mod $t^{n+1}$ and its determinant mod $t^{n+1}$ simplifies to $u^{M-nk}\cdot\prod_{j=1}^n (-u^k + t \epsilon^k j^k) \bmod t^{n+1}$. Hence, computing the first terms of a solution in~$u$ by using Newton polygons, we find $nk$ distinct nonzero solutions in~$u$ to the equation~$\Det(u)=0$. We denote these solutions by $U_1, \ldots, U_{nk}\in\overline{\LL}[[t^{\frac{1}{\star}}]]$. Their first terms are given by
$\zeta^\ell \cdot t^{\frac{1}{k}}\cdot \epsilon, \ldots, \zeta^\ell \cdot n\cdot t^{\frac{1}{k}}\cdot \epsilon
\in\overline{\LL}[[t^{\frac{1}{\star}}]]$, for $\zeta$
a $k$-primitive root of unity and for all $1\leq \ell\leq k$. Finally, note that the constant coefficient in $t$ of $\prod_{j=1}^n (-u^k + t \epsilon^k j^k)$ has degree $nk$ so by \cite[Theorem 2]{BMJ06} there cannot be more than $nk$ solutions to $\Det(u)=0$ in $\overline{\LL}[[t^{\frac{1}{\star}}]]\setminus\{0\}$.
\end{proof} 

Now, let $P$ be the determinant of the square matrix $(\partial_{x_j}E_i)_{1\leq i,j \leq n}$ where the last column $(\partial_{x_n}E_1,\dots,\partial_{x_n}E_n)$ is replaced by $(\partial_{u}E_1,\dots,\partial_{u}E_n)$, that is
 \begin{equation*}
     P:= 
        \det 
    \begin{pmatrix}
    \partial_{x_1}E_1 & \dots & \partial_{x_{n-1}}E_{1} & \partial_{u}E_1\\
    \vdots & \ddots & \vdots & \vdots \\
    \partial_{x_1}E_n & \dots & \partial_{x_{n-1}}E_{n} & \partial_{u}E_n\\
    \end{pmatrix}.
 \end{equation*}
Clearly, if $\Det(u)=0$ then \eqref{eqn:En_diff_u} implies $P(u)=0$, thus $P(u)$ vanishes at the roots $U_1,\dots,U_{nk}$ in \cref{lem:det_sols}. Hence, defining the polynomial system $\mathcal{S}$ in~$\LL[t][X_1, \ldots, X_n, u]$ given by the vanishing of the set of polynomials~$(E_1, \ldots, E_n, \Det, P)$, we see that $\mathcal{S}$ is a system with exactly~$n+2$ equations in the~$nk+n+1$ variables given by $z_0, \ldots, z_{nk-1}, x_1, \ldots, x_n, u$ (here $t$ and $\epsilon$ are parameters). 
We introduce the \emph{duplicated system} $\Sdupe := (\mathcal{S}_1, \ldots, \mathcal{S}_{nk})$, defined in~$\mathbb{L}(t)[x_1, \ldots, x_{n^2k},z_0,\dots,z_{nk-1}, u_1, \ldots, u_{nk}]$ after duplicating~$nk$ times each of the variables $x_i$'s, $u_i$'s and after duplicating~$nk$ times the initial polynomial system~$\mathcal{S}$: all in all, we perform in spirit step~$4$ of~Algorithm~\ref{algo:algo1}.
This system is built from $nk(n+2)$ equations and~$nk(n+2)$ variables.

The following lemma is proven in~\cite[Lemma~2.10]{BoChNoSa22} as a consequence of Hilbert's Nullstellensatz and of the Jacobian criterion \cite[Theorem~16.19]{Eisenbud95}. Recall that for an integer~$N>1$ and for some polynomial~$g\in\K[x_1, \ldots, x_N]$ and an ideal~$\mathcal{I}\subset\K[x_1, \ldots, x_N]$, the saturation of~$\mathcal{I}$ by~$g$ (also called saturated ideal) is defined by~$\mathcal{I}:g^\infty:=\{ f\in\K[x_1, \ldots, x_N] | \exists s\in\mathbb{N} \text{ s.t. } g^s\cdot f\in\mathcal{I}\}$. In practice, if~$\{h_1, \ldots, h_r\}$ is a generating set of~$\mathcal{I}$, then a generating set of~$\mathcal{I}:g^\infty$ is obtained by computing a generating set of~$\langle h_1, \ldots, h_r, m\cdot g-1\rangle\cap\K[x_1, \ldots, x_N]$, where~$m$ is an extra variable which encodes symbolically the set of points where~$g\neq0$ (see~\cite[p.~$205$, Thm.~$14$]{CoLiOSh15}). Also recall that an ideal $\mathcal{I}$ is called \emph{radical} if $\mathcal{I} = \sqrt{\mathcal{I}} \coloneqq \{ f : f^n \in \mathcal{I} \text{ for } n >0 \}$.
\begin{lem} \label{lem:jac_ideal}
    Assume that the Jacobian matrix $\Jac_{\Sdupe}$ of~$\Sdupe$, considered with respect to the variables 
    $x_1, \ldots, x_{n}, u_1, \ldots, x_{n^2k-n+1}, \ldots, x_{n^2k}, u_{nk},
    z_0, \ldots, z_{nk-1}$, is invertible at the point
    \begin{align*}
        \mathcal{P} = 
        (G_1&(U_1), \ldots, G_n(U_1), U_1, \ldots, G_1(U_{nk}), \ldots, G_n(U_{nk}), U_{nk}, {G}_1(0), \ldots, \partial_u^{k-1}{G}_1(0), \ldots, \\
        &{G}_n(0), \ldots, \partial_u^{k-1}{G}_n(0))
        \in\overline{\LL}[[t^{\frac{1}{\star}}]]^{nk(n+1)}\times\LL[[t]]^{nk}.
    \end{align*}
    Denote~$\mathcal{I}_{{{\operatorname{dup}}}}^\epsilon$ the ideal of~$\LL(t)[x_1, \ldots, x_{n}, u_1, \ldots, x_{n^2k-n+1}, \ldots, x_{n^2k}, u_{nk},
    z_0, \ldots, z_{nk-1}]$ generated by~$\Sdupe$. Then the saturated ideal $\mathcal{I}_{{{\operatorname{dup}}}}^\epsilon: \det(\Jac_{\Sdupe})^\infty$ is zero-dimensional and radical over $\LL(t)$. Moreover, $\mathcal{P}$ lies in the zero set of $\mathcal{I}_{{{\operatorname{dup}}}}^\epsilon: \det(\Jac_{\Sdupe})^\infty$.
\end{lem}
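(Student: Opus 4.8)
The plan is to invoke the cited result \cite[Lemma~2.10]{BoChNoSa22}, whose hypotheses are: that $\mathcal{P}$ is a common zero of the polynomials generating $\Sdupe$ at which the relevant Jacobian is invertible, and that $\det(\Jac_{\Sdupe})$ does not vanish at $\mathcal{P}$. The conclusion of that lemma then delivers the zero-dimensionality and radicality of the saturation $\mathcal{I}_{{{\operatorname{dup}}}}^\epsilon: \det(\Jac_{\Sdupe})^\infty$, together with the membership of $\mathcal{P}$ in its zero set. So the real content of the proof of \cref{lem:jac_ideal} is to check these hypotheses; the invertibility of $\Jac_{\Sdupe}$ is assumed in the statement, so what remains to verify is (i) that $\mathcal{P}$ is genuinely a zero of each $E_i, \Det, P$ (suitably duplicated), and (ii) that $\det(\Jac_{\Sdupe})(\mathcal{P}) \neq 0$, which is immediate from the assumed invertibility.

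First I would recall how the duplicated system arises: $\Sdupe$ consists of $nk$ copies $\mathcal{S}_1, \ldots, \mathcal{S}_{nk}$ of $\mathcal{S} = (E_1, \ldots, E_n, \Det, P)$, the $j$th copy using the variable block $x_{n(j-1)+1}, \ldots, x_{nj}$ and the variable $u_j$ while sharing the common $z_0, \ldots, z_{nk-1}$. To show $\mathcal{P}$ annihilates $\Sdupe$, I would check that for each fixed $j$, substituting $x_{n(j-1)+i} \mapsto G_i(U_j)$, $u_j \mapsto U_j$, and $z_{k(i-1)+\ell} \mapsto \partial_u^\ell G_i(0)$ makes each of $E_1, \ldots, E_n, \Det, P$ vanish. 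The vanishing of $E_1, \ldots, E_n$ is exactly the statement that $(G_1, \ldots, G_n)$ solves the deformed system \eqref{eqn:deformed_system} after clearing denominators (using that the $Y_{i,j}$ evaluate to $\Delta^j G_i(U_j)$ under the substitution), evaluated at $u = U_j$. The vanishing of $\Det$ at $u = U_j$ is precisely \cref{lem:det_sols}, which tells us $U_1, \ldots, U_{nk}$ are roots of $\Det(u)$. Finally, $P(U_j) = 0$ follows from the remark preceding the lemma: since $\Det(U_j) = 0$, the square matrix $(\partial_{x_\ell} E_i)(U_j)$ is singular, so the linear system \eqref{eqn:En_diff_u} evaluated at $u = U_j$ forces $(\partial_u E_1(U_j), \ldots, \partial_u E_n(U_j))$ to lie in the column space of the first $n-1$ columns $(\partial_{x_\ell} E_i)_{\ell \le n-1}$ of that matrix — wait, more carefully, one uses Cramer-type reasoning: the vector $(\partial_u E_i)$ plus the matrix times $(\partial_u G_i)$ is zero, so $(\partial_u E_i)$ is a linear combination of the columns $(\partial_{x_\ell} E_i)_{1 \le \ell \le n}$, whence the matrix with last column replaced by $(\partial_u E_i)$ — call it $P$ — has a dependent column set and thus determinant zero. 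Hence $P(U_j) = 0$ for all $j$, so $\mathcal{P}$ is in $V(\Sdupe)$.

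With $\mathcal{P} \in V(\Sdupe)$ and $\det(\Jac_{\Sdupe})(\mathcal{P}) \neq 0$ (from the assumed invertibility of $\Jac_{\Sdupe}$ at $\mathcal{P}$), the hypotheses of \cite[Lemma~2.10]{BoChNoSa22} are met, and I would cite it directly to conclude that $\mathcal{I}_{{{\operatorname{dup}}}}^\epsilon : \det(\Jac_{\Sdupe})^\infty$ is zero-dimensional and radical over $\LL(t)$, and that $\mathcal{P}$ lies in its zero set (the membership being preserved under saturation precisely because $\det(\Jac_{\Sdupe})$ is nonzero at $\mathcal{P}$). I would also remark that the number of equations in $\Sdupe$, namely $nk(n+2)$, equals the number of variables being differentiated against, which is what makes the Jacobian criterion applicable in the first place — this matching was already noted above.

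The main obstacle here is essentially bookkeeping rather than a deep argument: one must be careful that the variable $z$-block is shared across all $nk$ duplicated copies while the $x$- and $u$-variables are not, so the Jacobian is genuinely a square $nk(n+2) \times nk(n+2)$ matrix with a block structure (block-diagonal in the $x_j, u_j$ pairs except for the shared $z$-columns), and that the evaluation point $\mathcal{P}$ has the correct coordinates in the correct slots. The genuinely hard step — proving that $\Jac_{\Sdupe}$ is in fact invertible at $\mathcal{P}$ — is explicitly left out of this lemma (it is the content of the later \cref{lem:det_Jaci} and \cref{lem:Lambda} referenced in the introduction), so \cref{lem:jac_ideal} itself is a clean application of existing tools once the vanishing of $\Sdupe$ at $\mathcal{P}$ is verified.
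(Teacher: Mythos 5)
Your proposal is correct and takes essentially the same route as the paper: the paper likewise gives no standalone proof but cites \cite[Lemma~2.10]{BoChNoSa22} (itself a consequence of Hilbert's Nullstellensatz and the Jacobian criterion), having already verified in the preceding text that $\mathcal{P}$ annihilates the duplicated system — the $E_i$'s by construction of the $G_i$'s, $\Det$ by \cref{lem:det_sols}, and $P$ via the linear relation \eqref{eqn:En_diff_u} exactly as you argue. Your bookkeeping of the shared $z$-block and the count $nk(n+2)$ of equations versus variables matches the paper's setup.
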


Therefore, in order to conclude the algebraicity of $G_i(0), \ldots, \partial_u^{k-1}G_i(0)$ over $\LL(t)$ for all $1\leq i \leq n$, it is enough to justify that $\Jac_{\Sdupe}$ is invertible at $\mathcal{P}$. 
The idea for proving~$\det(\Jac_{\Sdupe})(\mathcal{P}) \neq 0$, analogous to the proof of \cite[Theorem~$3$]{BMJ06}, is to show first that $\Jac_{\Sdupe}(\mathcal{P})$ can be rewritten as a block triangular matrix. We will then show that the diagonal blocks are invertible by carefully analyzing the lowest valuation in~$t$ of their associated determinants.

If $A \in\LL[t][X_1, \ldots, X_n, u]$,
we shall define its ``$i$th duplicated polynomial'' by \[A^{(i)} := A(X_{n(i-1)+1}, \ldots, X_{ni}, u_i).\] Then the Jacobian matrix $\Jac_{\Sdupe}(\mathcal{P})$ has the shape
\begin{align*}
    \Jac_{\Sdup}(\mathcal{P}) = \begin{pmatrix}
    A_1 &        & 0  & B_1\\
        & \ddots &    & \vdots \\
    0   &        & A_{nk} & B_{nk}\\
    \end{pmatrix} \in \overline{\LL}[[t^{\frac{1}{\star}}]]^{nk(n+2) \times nk(n+2)},
\end{align*}
where for $i = 1,\dots,nk$ the matrices\footnote{In these matrices, we emphasize that notations
like~$\partial_{x_1}E_1^{(i)}(U_i)$ are compact forms for the specializations of the duplicated polynomial $\partial_{x_1}E_1^{(i)}$
to the values~$x_{(i-1)n+1} = F_1(t, U_i(t)), \ldots, x_{in} = F_n(t, U_i(t)), u_i = U_i(t), z_{(j-1)k+\ell}=(\partial_u^\ell F_j)(t, a)$.} $A_i \in  \overline{\LL}[[t^{\frac{1}{\star}}]]^{(n+2) \times (n+1)}$ and $B_i \in  \overline{\LL}[[t^{\frac{1}{\star}}]]^{(n+2) \times nk}$ are:

\begin{equation*}
 A_i := \begin{pmatrix}
    \partial_{x_1}E_1^{(i)}(U_i) & \dots & \partial_{x_n}E_1^{(i)}(U_i)  & \partial_{u_i}E_1^{(i)}(U_i)  \\
    \vdots & \ddots & \vdots & \vdots \\
    \partial_{x_1}E_n^{(i)}(U_i)  & \dots & \partial_{x_n}E_n^{(i)}(U_i)  & \partial_{u_i}E_n^{(i)}(U_i)  \\
     \partial_{x_1}\Det^{(i)}(U_i)  & \dots & \partial_{x_n}\Det^{(i)}(U_i)  & \partial_{u_i}\Det^{(i)}(U_i)  \\
    \partial_{x_1}P^{(i)}(U_i)  & \dots & \partial_{x_n}P^{(i)}(U_i)  & \partial_{u_i}P^{(i)}(U_i)  
    \end{pmatrix}, \end{equation*}
    \begin{equation*}
        B_i :=
    \begin{pmatrix}
    \partial_{z_0}E_1^{(i)}(U_i)  & \dots & \partial_{z_{nk-1}}E_1^{(i)}(U_i) \\
    \vdots & \ddots & \vdots \\
    \partial_{z_0}E_n^{(i)}(U_i) & \dots & \partial_{z_{nk-1}}E_n^{(i)}(U_i) \\
    \partial_{z_0}\Det^{(i)}(U_i)  & \dots & \partial_{z_{nk-1}}\Det^{(i)}(U_i) \\
    \partial_{z_0}P^{(i)}(U_i)  & \dots & \partial_{z_{nk-1}}P^{(i)}(U_i)
    \end{pmatrix}.
\end{equation*}\vspace{-0.3cm}

Using $\Det(U_i) = 0$ and (\ref{eqn:En_diff_u}), we see that the first $n \times (n+1)$ minor of each $A_i$ has rank at most $n-1$. Hence, after performing operations on the first $n$ rows, we can transform the $n$
th row of $A_i$ into the zero vector. It follows that after the suitable transformation and a permutation of rows, $\Jac_{\Sdupe}(\mathcal{P})$ can be rewritten as a block triangular matrix. To give the precise form of the determinant of $\Jac_{\Sdupe}(\mathcal{P})$, we first define 
\begin{equation} \label{eq:Rdef}
\small
  R := \det    \begin{pmatrix}
   \partial_{x_1}E_1^{(i)}(U_i) & \dots & \partial_{x_{n-1}}E_1^{(i)}(U_i) & y_1 \\
    \vdots & \ddots & \vdots  & \vdots \\
    \partial_{x_1}E_{n}^{(i)}(U_i)  & \dots & \partial_{x_{n-1}}E_{n}^{(i)}(U_i) & y_n 
    \end{pmatrix}\in\mathbb{K}[\{\partial_{x_\ell} E_j^{(i)}(U_i)\}_{1\leq j\leq n, 1\leq \ell\leq n-1}][y_1, \ldots, y_n].
\end{equation}\vspace{-0.3cm}
Then it follows that $
\det(\Jac_{\Sdupe})(\mathcal{P}) = \pm \big(\prod\limits_{i = 1}^{nk}\det(\Jac_{i}(U_i))\big)\cdot\det(\Lambda)$, where
\begin{equation*}
\small
    \Jac_i(u) := 
     \begin{pmatrix}
    \partial_{x_1}E_1^{(i)}(u) & \dots & \partial_{x_n}E_1^{(i)}(u)  & \partial_{u_i}E_1^{(i)}(u)  \\
    \vdots & \ddots & \vdots & \vdots \\
    \partial_{x_1}E_{n-1}^{(i)}(u)  & \dots & \partial_{x_n}E_{n-1}^{(i)}(u)  & \partial_{u_i}E_{n-1}^{(i)}(u)  \\
     \partial_{x_1}\Det^{(i)}(u)  & \dots & \partial_{x_n}\Det^{(i)}(u)  & \partial_{u_i}\Det^{(i)}(u)  \\
    \partial_{x_1}P^{(i)}(u)  & \dots & \partial_{x_n}P^{(i)}(u)  & \partial_{u_i}P^{(i)}(u)  
    \end{pmatrix} \in  \LL[u][[t]]^{(n+1) \times (n+1)}, \text{ \normalsize{and}}
\end{equation*}
\begin{equation} \label{eq:def_Lambda}
\small
   \Lambda := \big( R(\partial_{z_j} E_1^{(i)}(U_i), \ldots, \partial_{z_j} E_n^{(i)}(U_i))\big)_{1 \leq i, j+1 \leq nk} \in \overline{\LL}[[t^{\frac{1}{\star}}]]^{nk \times nk}.
\end{equation}
The proof that this product is nonzero is the content of \cref{lem:det_Jaci} and \cref{lem:Lambda}.
\begin{lem}\label{lem:det_Jaci}
    For each $i=1,\dots,nk$, the determinant of $\Jac_i(U_i)$ is nonzero.
\end{lem}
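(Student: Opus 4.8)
The plan is to pin down the leading $t$-adic term of $\det\Jac_i(U_i)$: since every entry, and hence $\det\Jac_i(U_i)$, lies in $\overline{\LL}[[t^{\frac{1}{\star}}]]$, it suffices to prove that this determinant has \emph{finite} $t$-adic valuation. The first reduction is to discard the polynomials $Q_1,\dots,Q_n$. They enter \eqref{eq:E_i's} only through the huge power $t^\alpha$, and $U_i$ has $t$-valuation $\tfrac1k$ by \cref{lem:det_sols}; tracking this through the construction of $U_i$, of the series $G_j(0),\dots,\partial_u^{k-1}G_j(0)$, and of the entries of $\Jac_i(U_i)$, every $Q_j$-contribution is seen to carry $t$-valuation at least $\alpha-c$, with $c$ bounded by an explicit polynomial in $n,k,M$. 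Writing $\widehat E_j,\widehat\Det,\widehat P$ for the polynomials of \eqref{eq:E_i's} with the $Q_j$-terms deleted and $\widehat U_i$ for the corresponding nonzero root of $\widehat\Det$, one gets $\det\Jac_i(U_i)=\det\widehat\Jac_i(\widehat U_i)+(\text{valuation}\ \ge\alpha-c)$, so it is enough to show that $v_t\big(\det\widehat\Jac_i(\widehat U_i)\big)$ is finite and below $\alpha-c$.

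The point of the $\epsilon$-deformation is that the undeformed system is \emph{affine}: in vector form $\widehat E=D(u)\big(\mathcal M(u)\,x+b(u)\big)$, where $D(u)=\operatorname{diag}(u^{m_1-k},\dots,u^{m_n-k})$, the pencil $\mathcal M(u):=t\epsilon^k\Gamma-u^kI$ is built from the matrix $\Gamma=(\gamma_{j,\ell})_{j,\ell}$, and $b(u)$ collects the $x$-free terms. In particular $\widehat\Det=u^{M-nk}\det\mathcal M(u)$ involves neither the $x_m$ nor the $z_p$, so the $\widehat\Det$-row of $\widehat\Jac_i(\widehat U_i)$ is $(0,\dots,0,\partial_u\widehat\Det(\widehat U_i))$; expanding along it, $\det\widehat\Jac_i(\widehat U_i)=\pm\,\partial_u\widehat\Det(\widehat U_i)\cdot\det M'$, where $M'$ is the $n\times n$ matrix of $\partial_x\widehat E_1,\dots,\partial_x\widehat E_{n-1},\partial_x\widehat P$ at $\widehat U_i$ obtained by deleting that row and the $\partial_u$-column. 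A direct Newton-polygon computation, using that $\widehat U_i^k$ is one of the $n$ pairwise distinct roots in $s$ of $\det(t\epsilon^k\Gamma-sI)$, gives $v_t\big(\partial_u\widehat\Det(\widehat U_i)\big)=(M-1)/k$; in particular this factor is nonzero, which is exactly the statement that $\widehat U_i$ is a simple root of $\widehat\Det$.

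The heart of the proof is $\det M'\neq 0$. Writing $C_1,\dots,C_{n-1}$ for the first $n-1$ columns of $D(u)\mathcal M(u)$, one has $\partial_{x_m}\widehat P=\det(C_1,\dots,C_{n-1},\partial_uC_m)$, so the last row of $M'$ is $\big(\det(C_1,\dots,C_{n-1},\partial_uC_m)\big)_m$ evaluated at $\widehat U_i$. The vector of cofactors of this last row coincides with that of $N(\widehat U_i):=D(\widehat U_i)\mathcal M(\widehat U_i)$ (both are determined by the common first $n-1$ rows) and therefore lies in $\ker N(\widehat U_i)=\ker\mathcal M(\widehat U_i)$; since $\widehat U_i^k$ is a \emph{simple} eigenvalue of $t\epsilon^k\Gamma$ (the eigenvalues of $\Gamma=\operatorname{diag}(1^k,\dots,n^k)+t^\beta(J-I)$ are distinct, being distinct modulo $t^\beta$), this cofactor vector equals $\lambda w$ for the right eigenvector $w$ of $t\epsilon^k\Gamma$ attached to $\widehat U_i^k$. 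Factoring $D(\widehat U_i)$ out of each column, using multilinearity in the last column, and using $\mathcal M(\widehat U_i)w=0$ together with $\operatorname{diag}(m_1-k,\dots,m_n-k)-\operatorname{diag}(m_1,\dots,m_n)=-kI$, one finds
\[
\det M'=\pm\,k\,\lambda\,\widehat U_i^{\,M-(n-1)k-1}\cdot\det\!\big(\mathcal M(\widehat U_i)_{\cdot 1},\dots,\mathcal M(\widehat U_i)_{\cdot(n-1)},\,w\big).
\]
Now $w$ does not lie in the column space of $\mathcal M(\widehat U_i)$ — otherwise $w$ would be orthogonal to the left eigenvector for the simple eigenvalue $\widehat U_i^k$, which is impossible — so the last determinant is nonzero; and first-order perturbation theory (the perturbation $t^\beta(J-I)$ couples the index of $\widehat U_i^k$ to index $n$) shows $w_n\neq 0$, whence $\lambda\neq 0$ and the first $n-1$ columns of $\mathcal M(\widehat U_i)$ are independent. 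Each of the three factors above is therefore a nonzero Puiseux series, and bounding their valuations bounds $v_t(\det M')$ by an explicit polynomial in $n,k,M$.

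Combining the last two paragraphs, $v_t\big(\det\widehat\Jac_i(\widehat U_i)\big)=(M-1)/k+v_t(\det M')$ is finite and bounded by a polynomial in $n,k,M$ which — by the definition of $\alpha$ — is strictly smaller than $\alpha-c$; hence $\det\Jac_i(U_i)\neq 0$. The main obstacle is the linear-algebra collapse of $\det M'$ to the single pencil determinant $\det(\mathcal M(\widehat U_i)_{\cdot 1},\dots,\mathcal M(\widehat U_i)_{\cdot(n-1)},w)$ and the verification of the attached non-degeneracies (simplicity of the eigenvalue $\widehat U_i^k$, $w_n\neq0$, $\lambda\neq 0$); the Newton-polygon valuation estimates, and the comparison with $\alpha$, are tedious but routine.
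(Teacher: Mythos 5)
Your proposal reaches the same conclusion by a genuinely different mechanism. The paper's proof keeps the full (deformed) system, expands $\det(\Jac_i(U_i))$ along the last column, isolates the dominant term $\partial_{u_i}\Det^{(i)}(U_i)\cdot\det(\mathcal{M})$, and then establishes $\det(\mathcal M)\neq 0$ by an entry-by-entry valuation analysis of $\mathcal M$, with a two-case split according to whether $U_i^k = n^k t\epsilon^k+\cdots$ or $U_i^k=m^kt\epsilon^k+\cdots$ with $m<n$ (the second case forcing a Newton-iteration computation of the second-order term of $U_i$). You instead truncate away the $t^\alpha Q_j$-terms, observe that the truncated system is affine in $(x,z)$ with Jacobian pencil $D(u)(t\epsilon^k\Gamma-u^kI)$, and obtain an \emph{exact} factorization of the truncated $\det(\mathcal M)$ as $\pm k\lambda\,\widehat U_i^{\,M-(n-1)k-1}\det\bigl(\mathcal M(\widehat U_i)_{\cdot1},\dots,\mathcal M(\widehat U_i)_{\cdot(n-1)},w\bigr)$; nonvanishing then reduces to spectral nondegeneracies of the symmetric matrix $\Gamma$ (simplicity of the eigenvalue, $v^Tw\neq0$, $w_n\neq0$). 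I checked the linear-algebra collapse (the cofactor vector of the last row lying in $\ker\mathcal M(\widehat U_i)$, the identity $\partial_u(Nw)(\widehat U_i)=-k\widehat U_i^{m_\ell-1}w_\ell$, and the valuation $(M-1)/k$ of $\partial_u\widehat\Det(\widehat U_i)$) and it is correct. This is more conceptual than the paper's argument: the paper's Case 1/Case 2 dichotomy becomes simply the dichotomy $j=n$ versus $j<n$ for the eigenvector index, and the nonvanishing part needs no case split at all.

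The one load-bearing step you defer is, however, exactly where most of the paper's effort goes. Nonvanishing of the truncated determinant does not by itself give nonvanishing of $\det(\Jac_i(U_i))$: you need the quantitative bound $\val_t\bigl(\det\widehat{\Jac}_i(\widehat U_i)\bigr)<\alpha-c$, together with an explicit $c$ controlling both the $t^\alpha Q_j$-corrections to the entries and the perturbation $U_i-\widehat U_i$. In your framework this amounts to computing the leading terms (not merely the nonvanishing) of $\lambda$ and of the bordered determinant via first- and second-order perturbation of the eigenpairs of $\Gamma=\mathrm{diag}(1^k,\dots,n^k)+t^\beta(J-I)$; a priori these nonzero Puiseux series could have unexpectedly large valuation from cancellation, and ruling that out is precisely what the paper's choice of $\beta=\lfloor 2M/k\rfloor$ and the valuation tables in Cases 1 and 2 accomplish (yielding $\val_t(\det(\mathcal M))=(2M-m_n-1)/k$, resp.\ $2\beta+(2M-m_n-1)/k$). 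Your factorization makes these computations cleaner, and I believe they go through, but as written they are asserted rather than carried out, so the proof is not complete without them.
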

\begin{proof}
To prove that $\det(\Jac_{i}(U_i))\neq 0$ we will show that $\operatorname{val}_t(\det(\Jac_{i}(U_i)))<\infty$, where $\val_t$ denotes the valuation in~$t$.
The main idea here is to expand~$\det(\Jac_{i}(U_i))$ with respect to the last column and show that the least valuation comes from the product of~$\partial_{u_i}\Det^{(i)}(U_i)$ by the determinant of its associated submatrix\footnote{For~$A=(a_{i, j})_{1\leq i, j\leq n}$ some matrix of size $n\times n$, the submatrix associated to an element~$a_{i_0, j_0}$ is the $(n-1)\times(n-1)$ submatrix of~$A$ obtained by deletion of the~$i_0$th row and~$j_0$th column from $A$.}, denoted by $\mathcal{M}$. For some matrix $\mathcal{A}$ whose entries are series in $t$, we shall denote by $\val_t(\mathcal{A})$ the matrix of the valuations in~$t$ of the entries of~$\mathcal{A}$.

We shall justify that the term with lowest exponent in~$t$ in~$\det(\Jac_{i}(U_i))$ comes from the first term in the product~$\det(\mathcal{M})\cdot\partial_{u_i}\Det^{(i)}(U_i)$. By construction, the  monomials in~$\{E_i\}_{1\leq i \leq n}$ that are quadratic in the variables~$\{x_j\}_{1\leq j \leq n}$ all carry a~$t^\alpha$, so that~$\partial_{x_j}\Det^{(i)}(U_i) = O(t^\alpha)$ for all~$1\leq j\leq n$ and all~$1\leq i\leq nk$.
Thus when expending~$\det(\Jac_{i}(U_i))$ with respect to the last column of~$\Jac_{i}(U_i)$, the minors associated with~$\partial_{u_i}E_{j}^{(i)}(U_i)$ and~$\partial_{u_i}P^{(i)}(U_i)$ are in~$O(t^\alpha)$ (for~$1\leq i \leq nk$). 

Thus, it remains to prove that $\operatorname{val}_t(\partial_{u_i}\Det^{(i)}(U_i)\cdot\det(\mathcal{M})) <\alpha$; this is done in the remaining part of the proof. Note that the expression of~$\Det \bmod t^{n+1}$ in the proof of~\cref{lem:det_sols} implies that
\begin{equation} \label{eq:valDetUi}
    \val_t(\partial_{u_i}\Det^{(i)}(U_i))=(M-nk)/k+ n-1 + (k-1)/k = (M-1)/k.
\end{equation}

For $\det(\mathcal{M})$ there are two cases to treat separately: 
Either (\textbf{Case 1}) we have~$U_i^k = n^kt\epsilon^k + (\text{higher powers of}~t)$, or (\textbf{Case 2}) we have~$U_i^k = m^kt\epsilon^k + (\text{higher powers of}~t)$ for some $m <n$. The reason for this distinction is that 
\begin{equation} \label{eq:diffxiEl}
    \partial_{x_j} E_\ell(u) = 
    \begin{cases}
    -u^{m_\ell} + t\epsilon^k u^{m_\ell-k}\ell^k, \text{ if } \ell = j,\\
    t^{\beta+1}\epsilon^k u^{m_\ell-k}, \text{ else },
    \end{cases}
\end{equation}
and, therefore, in \textbf{Case 1} the $(\ell,\ell)$ entry of $\M$ always has valuation in $t$ given by $m_\ell/k$ for each $\ell = 1,\dots,n-1$, while in \textbf{Case 2} the entry of $\M$ on row and column $m$ has a valuation in $t$ that depends on $\beta$.

\subparagraph{Case~$\mathbf{1}$:} Assume that the~$U_i$ of interest satisfies~
\begin{equation}\label{lem5:case1}
U_i^k = n^kt\epsilon^k + (\text{higher powers of}~t).
\end{equation}
By definition, $\M_{\ell,j} = \partial_{x_j} E_\ell(U_i)$ for $j=1,\dots,n$ and $\ell=1,\dots,n-1$ and thus from (\ref{eq:diffxiEl}) we find 
\begin{equation} \label{eq:valM}
\val_t(\mathcal{M})_{\ell, j}  =  \begin{cases}  \frac{m_\ell}{k}  & \text{ if } \ell=j,\\
\beta+\frac{m_\ell}{k} & \text{ else}. 
    \end{cases}
\end{equation}

Moreover, we claim that the bottom right entry of $\M$ has valuation $(M-1)/k$ in $t$. To prove this we compute 
\begin{align}
    \M_{n,n} = \partial_{x_n} P^{(i)}(U_i) & = 
       \partial_{x_n} \det 
    \begin{pmatrix}
    \partial_{x_1}E_1 & \dots & \partial_{x_{n-1}}E_{1} & \partial_{u}E_1\\
    \vdots & \ddots & \vdots & \vdots \\
    \partial_{x_1}E_n & \dots & \partial_{x_{n-1}}E_{n} & \partial_{u}E_n\\
    \end{pmatrix}^{(i)}(U_i) \nonumber \\
     & = 
    \det \begin{pmatrix}
    \partial_{x_1}E_1 & \dots & \partial_{x_{n-1}}E_{1} & \partial^2_{x_n, u}E_1\\
    \vdots & \ddots & \vdots & \vdots \\
    \partial_{x_1}E_n & \dots & \partial_{x_{n-1}}E_{n} & \partial^2_{x_n, u}E_n\\
    \end{pmatrix}^{(i)}(U_i) \bmod t^\alpha, \label{eq:Mnn}
\end{align} 
since $\partial_{x_n, x_j}^2 E_{\ell} \in O(t^\alpha)$ for each $j = 1,\dots,n$ and $\ell = 1,\dots,n$. 
Regarding the last column of the above matrix, a straightforward computation yields \begin{equation}\label{eqn:cross_der}
        \partial^2_{x_n, u}E_n(u) = -m_n u^{m_n-1} + t(m_n-k) n^k u^{m_n-k-1}+O(t^\alpha).
    \end{equation}
    From this and (\ref{lem5:case1}) it follows that $\val_t((\partial^2_{x_n,u}E_\ell)^{(i)}(U_i)) = (m_n-1)/k$. Moreover, using \eqref{eq:diffxiEl} and~\eqref{eq:valM},  we see that the only way to obtain in (\ref{eq:Mnn}) a term with exponent in $t$ independent of $\beta$ is to take the product of entries of the main diagonal. The sufficiently large choice of $\beta = \lfloor 2 {M}/{k} \rfloor$ implies that $\val_t(\M_{n,n}) = \val_t((\partial_{x_n} P)^{(i)}(U_i)) = \frac{m_1}{k} + \cdots + \frac{m_{n-1}}{k} + \frac{m_n-1}{k} = (M-1)/k$, where, as before, $M$ denotes $\sum_{j=1}^n m_j$.
     
     Altogether, in this case we obtain that $\val_t(\mathcal{M})$ has the shape 
     \begin{equation*}
  \small   
  \operatorname{val}_t(\mathcal{M}) = \begin{pmatrix}
    \frac{m_1}{k} & \beta +\frac{m_1}{k}& \dots & \beta +\frac{m_1}{k} & \beta + \frac{m_1}{k} \\
    \beta + \frac{m_2}{k} & \frac{m_{2}}{k} & \dots & \beta + \frac{m_{2}}{k} & \beta + \frac{m_{2}}{k} \\
    \vdots & & \ddots & \vdots  &\vdots \\
    \beta + \frac{m_{n-1}}{k} & \beta + \frac{m_{n-1}}{k} & \dots & \frac{m_{n-1}}{k}  & \beta +\frac{m_{n-1}}{k} \\
    \star& \star & \dots &\star & (M-1)/k
    \end{pmatrix}.
  \end{equation*}
It follows that the only term in the determinant of $\mathcal{M}$ whose exponent in $t$ has no dependency on $\beta$ comes from the product of the entries located on the main diagonal of~$\mathcal{M}$, and that the choice~$\beta = \lfloor 2 {M}/{k} \rfloor$ ensures that~$\val_t(\det(\mathcal{M}))<\beta$. Thus~\eqref{eq:valDetUi} implies~$ \operatorname{val}_t(\partial_{u_i}\Det^{(i)}(U_i)\cdot\det(\mathcal{M})) = 
(M-1)/k + (2M-m_n-1)/k$. Finally, from~$\alpha=3n^2k\cdot (\lfloor 2M/k \rfloor + 1) + 3nM$, it follows that \(\operatorname{val}_t(\partial_{u_i}\Det^{(i)}(U_i)\cdot\det(\mathcal{M})) <\alpha\), as wanted.

\subparagraph{Case~$\mathbf{2}$:} The second case is similar in spirit to~\textbf{Case~1}, but because of (\ref{eq:diffxiEl}) when~$U_i$ is not of the form~\eqref{lem5:case1}, computing the valuation in~$t$ of~$\det(\mathcal{M})$ is slightly more delicate. In this case \begin{equation}\label{lem5:case2}
    U_i^k = m^k t\epsilon^k + (\text{higher powers of}~t),
\end{equation}
for some $m<n$. For the sake of better readability, we shall assume without loss of generality that $m=1$, since the argument works equally well in the general case.

It follows from the definition of $\M$ and (\ref{eq:diffxiEl}) that
   \begin{equation*}  \mathcal{M} =   \begin{pmatrix}
     -U_i^{m_1}+t\epsilon^kU_i^{m_1-k} & \dots & \epsilon^kt^{\beta+1}U_i^{m_1-k}& \epsilon^kt^{\beta+1}U_i^{m_1-k}\\
     \vdots & \ddots & \vdots  & \vdots \\
     \epsilon^kt^{\beta+1}U_i^{m_{n-1}-k}  & \dots & -U_i^{m_{n-1}}+(n-1)^kt\epsilon^kU_i^{m_{n-1}-k} & \epsilon^kt^{\beta+1}U_i^{m_{n-1}-k} \\
     \partial_{x_1}P^{(i)}(U_i)  &  \dots & \partial_{x_{n-1}}P^{(i)}(U_i)   & \partial_{x_n}P^{(i)}(U_i)  
     \end{pmatrix} \operatorname{mod} t^\alpha.
     \end{equation*}
As before in (\ref{eq:valM}), for~$1\leq j\leq n$, we have~$\val_t(\epsilon^kt^{\beta+1}U_i^{m_{j}-k}) = \beta+\frac{m_j}{k}$. Also, assuming that $m=1$, we have for all~$2\leq j\leq n-1$ that
 \[
 -U_i^{m_j}+j^kt\epsilon^kU_i^{m_j-k} = \lambda_{i, j}\cdot t^{\frac{m_j}{k}} +  (\text{higher powers of}~t),
 \]
 for some nonzero~$\lambda_{i, j}\in\overline{\K}(\epsilon)$. 
However, for $j=1$ the term $\lambda_{i,j}$ vanishes and so we shall compute the valuation in $t$ of~$-U_i^{m_1} + t\epsilon^kU_i^{m_1-k}$ in this case. From the expansion of $\Det(u)$ to higher order terms
 \begin{align*}
     \Det(u) = u^{M-nk}\cdot[\prod_{1\leq j \leq n}{(-u^k+j^kt\epsilon^k)} - t^{2(\beta+1)}\epsilon^{2k}\sum_{1\leq\ell < j \leq n}\prod_{\substack{b=1, \\b\neq \ell, b\neq j}}^n{(-u^k+b^kt\epsilon^k)}]+O(t^{3\beta}),
 \end{align*}
 we use Newton's method to find the second lowest term of $U_i$ with respect to the exponent in $t$:
 \[
 U_i = \zeta^{\ell_i} t^{\frac{1}{k}} \epsilon + \lambda_i t^{2 \beta  + \frac{1}{k}} + (\text{higher powers of}~t),
 \]
 for some $1\leq \ell_i \leq k$ and $\lambda_i \neq 0$. This implies 
 that~\begin{align*}
    \val_t(\mathcal{M}_{1,1}) = \operatorname{val}_t(-U_i^{m_1}+t\epsilon^kU_i^{m_1-k}) &= \val_t(U_i^{m_1-k}) + \val_t(-U_i^k+ t\epsilon^k) \\
     &= \frac{m_1-k}{k} + \frac{(k-1)}{k} + 2\beta+\frac{1}{k} = 2\beta+\frac{m_1}{k}.
 \end{align*}
 It remains to understand the valuation in~$t$ of the last row of $\M$, i.e. of~$\{\partial_{x_j}P^{(i)}(U_i)\}_{1\leq j\leq n}$. The same argument as in (\ref{eq:Mnn}) implies that for any $j=1,\dots,n$ we have
 \begin{equation*}
   (\partial_{x_j} P)^{(i)}(U_i)= 
    \det 
\begin{pmatrix}
-U_i^{m_1}+t\epsilon^kU_i^{m_1-k} & \dots &  \epsilon^kt^{\beta+1}U_i^{m_1-k}& \partial^2_{x_j, u}E_1^{(i)}(U_i)\\
\vdots & \ddots & \vdots & \vdots \\
 \epsilon^kt^{\beta+1}U_i^{m_n-k}& \dots &  \epsilon^kt^{\beta+1}U_i^{m_n-k} & \partial^2_{x_j, u}E_n^{(i)}(U_i)\\
\end{pmatrix} \operatorname{mod} t^\alpha.
 \end{equation*} 
Moreover, in spirit of (\ref{eqn:cross_der}) it holds because of (\ref{eq:diffxiEl}) and
(\ref{lem5:case2}) that
  \begin{align*}
     \partial^2_{u, x_j}E_\ell(U_i) = \begin{cases}
     (m_\ell-1)/k & \text{ if } j=\ell,\\
     O(t^{\beta+\frac{m_\ell-1}{k}}) & \text{else.}
     \end{cases}
 \end{align*}
 Putting everything together, it follows that 
 \begin{equation*}
 \val_t (\partial_{x_j} P^{(i)}(U_i)) =
 \val_t \det \begin{pmatrix}
t^{2\beta+\frac{m_1}{k}} & \cdots &  \cdots & t^{\beta+\frac{m_1}{k}} & t^{\beta+\frac{m_1-1}{k}}\\
t^{\beta+\frac{m_2}{k}} & t^{\frac{m_2}{k}} & t^{\beta+\frac{m_1}{k}} &  \cdots & t^{\beta+\frac{m_2-1}{k}}\\
\vdots & \ddots & \ddots & \ddots & \vdots \\
      t^{\beta+\frac{m_{j}}{k}} & \cdots & t^{\frac{m_{j}}{k}} & t^{\beta+\frac{m_{j}}{k}}  & t^{(m_j-1)/k}\\
    \vdots & \ddots & \ddots & \ddots & \vdots \\
t^{\beta+\frac{m_{n-1}}{k}} & \cdots & t^{\beta+\frac{m_{n-1}}{k}} & t^{\frac{m_{n-1}}{k}} & t^{\beta+\frac{m_{n-1}-1}{k}}\\
 t^{\beta+\frac{m_n}{k}} & \cdots & t^{\beta+\frac{m_n}{k}} &  t^{\beta+\frac{m_n}{k}} & t^{\beta+\frac{m_n-1}{k}}\\
\end{pmatrix},
 \end{equation*}
where in the matrix above the $(i_0,j_0)$ entry is $t^{\beta + m_{i_0}/k}$ if $1 \leq j_0 <n$ and $1 \leq i_0 \neq j_0 \leq n$, $t^{m_{i_0}/k}$ if $1 < i_0=j_0 < n$, $t^{2 \beta + m_1/k}$ if $i_0=j_0=1$, $t^{\beta+\frac{m_{i_0}-1}{k}}$ if $j_0=n, i_0 \neq j$, and $t^{(m_j-1)/k}$ if $j_0=n, i_0 = j$. From this we obtain that $\val_t (\partial_{x_j} P^{(i)}(U_i))$ is at least $2\beta+{(M-1)/k}$, except if $j=1$, since then it is given by the valuation of the product of the lower left entry, the upper right entry and the remaining entries on the diagonal of the matrix above. In other words:
  \begin{align*}
    \val_t (\partial_{x_j}P^{(i)}(U_i)) = 
     \begin{cases}
     \beta+(M-1)/k & \text{ if } j=1, \\
      \geq {2\beta+{(M-1)/k}} & \text{else.}
     \end{cases}
 \end{align*}
 This means that $\val_t(\mathcal{M})$ has the shape 
 \begin{equation*}
\begin{pmatrix}
2\beta+\frac{m_1}{k} & \beta+\frac{m_1}{k} & \cdots & \cdots & \beta+\frac{m_1}{k}\\
\beta+\frac{m_2}{k} & \frac{m_2}{k} & \beta+\frac{m_2}{k} &  \cdots & \beta+\frac{m_2}{k}\\
\vdots & \ddots & \ddots & \ddots & \vdots \\
    \vdots & \ddots & \ddots & \ddots & \vdots \\
      \beta+\frac{m_{n-2}}{k}& \cdots & \frac{m_{n-2}}{k} & \beta+\frac{m_{n-2}}{k} & \beta+\frac{m_{n-2}}{k} \\
\beta+\frac{m_{n-1}}{k}& \cdots & \beta+\frac{m_{n-1}}{k} & \frac{m_{n-1}}{k} &  \beta+\frac{m_{n-1}}{k}\\
 \beta+\frac{M-1}{k}&  \geq 2\beta+\frac{M-1}{k} & \cdots &\geq 2\beta+\frac{M-1}{k} &\geq 2\beta+\frac{M-1}{k}\\
\end{pmatrix}.
 \end{equation*} 
 We see that the valuation of $\det(\M)$ is $2 \beta + (2M-m_n-1)/k$, since it is given by the sum of the lower left entry, the upper right entry and the remaining entries on the diagonal of the matrix above. Note that each other combination will sum to at least $4 \beta$ and the choice $\beta = \lfloor 2M/k \rfloor$ ensures that this term is strictly larger.
 
 It remains to check that also in this case our choice for~$\alpha$ was large enough to guarantee that $\operatorname{val}_t(\partial_{u_i}\Det^{(i)}(U_i) \cdot \det(\mathcal{M})) <\alpha$. Using that~$\val_t(\det(\mathcal{M})) = 2\beta + (2M-m_n-1)/k$ and~(\ref{eq:valDetUi}), we obtain
 \[ \operatorname{val}_t(\partial_{u_i}\Det^{(i)}(U_i))+ \val_t(\det(\mathcal{M}))
  = 2\beta + \frac{3M-m_n-2}{k} < 2\beta + \frac{3M}{k}.
 \]
 As~$2\beta + \frac{3M}{k} < \alpha$, this concludes the proof of~\cref{lem:det_Jaci}.
\end{proof}

\begin{lem}\label{lem:Lambda}
    The determinant of $\Lambda$ is nonzero.
\end{lem}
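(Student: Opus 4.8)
The plan is to mimic the valuation analysis carried out for~\cref{lem:det_Jaci}: inside $\det(\Lambda)$ we isolate the term of smallest $t$-valuation, check that no cancellation occurs, and conclude $\val_t(\det\Lambda)<\infty$. First I record the shape of the entries. Expanding the determinant $R$ of~\eqref{eq:Rdef} along its last column yields $R(y_1,\dots,y_n)=\sum_{\ell=1}^n \sigma_\ell\, c_\ell^{(i)}\, y_\ell$, where $\sigma_\ell:=(-1)^{\ell+n}$ and $c_\ell^{(i)}$ is the $(n-1)\times(n-1)$ minor of $(\partial_{x_a}E_j^{(i)}(U_i))_{1\le j\le n,\,1\le a\le n-1}$ obtained by deleting its $\ell$th row. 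From the deformed equations~\eqref{eqn:deformed_system}, for a column index $j=k(\iota-1)+\rho$ with $1\le\iota\le n$, $0\le\rho\le k-1$, one finds $\partial_{z_j}E_\ell^{(i)}(U_i)=-\tfrac{t\epsilon^k\gamma_{\ell,\iota}}{\rho!}U_i^{m_\ell+\rho-k}$ up to a correction of $t$-valuation comparable to $\alpha$; using also~\eqref{eq:diffxiEl} for the other columns of $R$, this gives
\[
\Lambda_{i,\,k(\iota-1)+\rho}\;=\;-\,\frac{t\,\epsilon^k}{\rho!}\,U_i^{\rho-k}\,W_{i,\iota}\;+\;(\text{correction of valuation comparable to }\alpha),
\qquad
W_{i,\iota}\;:=\;\sum_{\ell=1}^n\sigma_\ell\, c_\ell^{(i)}\,\gamma_{\ell,\iota}\,U_i^{m_\ell}.
\]
Since $\alpha$ is huge, it suffices to prove that the principal part of $\det(\Lambda)$ — obtained by replacing each entry by its displayed closed form — is nonzero and has $t$-valuation bounded independently of $\alpha$.

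Two structural facts drive the argument. By~\cref{lem:det_sols} the $nk$ roots fall into $n$ \emph{clusters}: for each $p\in\{1,\dots,n\}$ there are exactly $k$ roots $U_i$ with $U_i^k=p^kt\epsilon^k+(\text{higher order})$ — write $p=p(i)$ for the \emph{type} of $U_i$ — and within one cluster these $k$ roots have pairwise distinct leading terms $p\,\zeta^\ell\, t^{1/k}\epsilon$, so any Vandermonde built from them is nonzero. A valuation computation in the spirit of \textbf{Case~1} and \textbf{Case~2} of~\cref{lem:det_Jaci} — expanding each minor $c_\ell^{(i)}$ as a minimum-weight bipartite matching via~\eqref{eq:diffxiEl}, tracking the ``anomalous'' entry of valuation $m_p/k+2\beta$ (which occurs exactly when $U_i$ has some type $p\le n-1$ and the index $p$ labels one of the first $n-1$ columns), and using $\val_t(U_i^k-p^kt\epsilon^k)=2\beta+1$ — then shows that $c_{p(i)}^{(i)}$ has a nonzero leading term and that
\[
\val_t\!\big(W_{i,p(i)}\big)=\tfrac{M}{k}+\nu_{p(i)}\quad\text{with a nonzero leading coefficient},\qquad
\val_t\!\big(W_{i,\iota}\big)\ge\val_t\!\big(W_{i,p(i)}\big)+\beta\quad(\iota\neq p(i)),
\]
where $\nu_n=0$ and $\nu_p=\beta$ for $p\le n-1$ (the exact values of the $\nu_p$ are irrelevant).

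Now I conclude. For any permutation $\pi$ in the Leibniz expansion, $\sum_i\rho(\pi(i))/k=n(k-1)/2$ is a constant, so the $t$-valuation of the $\pi$-term of the principal part of $\det(\Lambda)$ equals $n(k-1)/2+\sum_i\val_t\!\big(W_{i,\iota(\pi(i))}\big)$, which by the displayed estimates is minimized exactly by the \emph{block-respecting} permutations, namely those sending the $k$ rows of cluster $p$ onto the $k$ columns with indices in $\{k(p-1),\dots,kp-1\}$ (a non block-respecting $\pi$ moves at least two rows out of their blocks, hence picks up at least $2\beta$). Summing over block-respecting $\pi$ and factoring out the row factors $-t\epsilon^kU_i^{-k}W_{i,p(i)}$ and the column factors $1/\rho!$, which are constant along such $\pi$, the block-respecting part of the principal part of $\det(\Lambda)$ equals, up to sign,
\[
\Big(\prod_{i=1}^{nk}\,-t\,\epsilon^k\,U_i^{-k}\,W_{i,p(i)}\Big)\cdot\Big(\prod_{\rho=0}^{k-1}\tfrac1{\rho!}\Big)^{\!n}\cdot\prod_{p=1}^n\ \prod_{\substack{i<i'\\ p(i)=p(i')=p}}\!\!\big(U_i-U_{i'}\big),
\]
a product of factors each having a nonzero leading term, hence nonzero; since every other term has strictly larger valuation, it is not cancelled. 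Thus $\val_t(\det\Lambda)$ is finite (and, as one checks against $\beta=\lfloor2M/k\rfloor$ and $\alpha=3n^2k(\beta+1)+3nM$, smaller than the correction valuations), so $\det(\Lambda)\neq0$. The main obstacle is the valuation bookkeeping of the second paragraph: one must handle each minor $c_\ell^{(i)}$ through the matching argument, verify that the optimal matching is attained \emph{uniquely} when computing $c_{p(i)}^{(i)}$ so that its leading coefficient does not vanish, and confirm that $\beta$ and $\alpha$ are chosen large enough for every estimate to close.
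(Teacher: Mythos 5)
Your strategy is genuinely different from the paper's. You attack $\det(\Lambda)$ head-on by a Leibniz/valuation analysis in $\overline{\LL}[[t^{1/\star}]]$: group the roots $U_i$ into $n$ clusters by type, show the block-respecting permutations are the unique minimizers of the $t$-valuation, and factor their total contribution into the row factors $W_{i,p(i)}$ times intra-cluster Vandermonde determinants. The paper instead lifts $\Lambda$ to the polynomial matrix $\tilde{\Lambda}(u_1,\dots,u_{nk})$ \emph{before} substituting the roots, proves $\det(\tilde{\Lambda})\neq 0$ by a linear-independence argument (via \cref{lemma:invertib_dup_matrix}: a vanishing determinant would force a relation $\sum_j\lambda_j\tilde{R}_j=0$, which collapses to the invertible system \eqref{eqn:diag_key_matrix}), then establishes the exact factorization \eqref{eq:detLfact}, namely $\det(\tilde{\Lambda})=\prod_i u_i^{M-nk}\cdot\prod_{i<j}(u_i-u_j)\cdot H(t)$ with $\deg_t H\le n^2k(\beta+1)$, and only then substitutes the $U_i$: distinctness of the roots and the degree bound on $H$ give valuation $<\alpha$ at once. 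The payoff of the paper's route is that it never needs the leading coefficient of any individual entry or minor -- all potential cancellations are absorbed into the single polynomial $H(t)$, whose nonvanishing is certified structurally. Your route yields a more explicit leading term, but at the price of exactly the computation you defer.

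That deferral is a genuine gap in the proposal as written: everything hinges on $W_{i,p(i)}$ having a \emph{nonzero} leading coefficient at the stated valuation, i.e.\ on the minimal-weight matching in the $n\times n$ determinant defining $W_{i,p(i)}$ being unique, and on the uniform lower bounds $\val_t(W_{i,\iota})\ge \val_t(W_{i,p(i)})+\beta$ for $\iota\neq p(i)$; you state these estimates but do not prove them, and without them the non-cancellation claim for the block-respecting sum is unsupported. For the record, they do hold: factoring $U_i^{m_\ell-k}$ out of row $\ell$ reduces $W_{i,\iota}$ to $U_i^{M-nk}$ times an $n\times n$ determinant whose entries have valuations $1$, $1+\beta$ or $1+2\beta$ (the last via $\val_t(U_i^k-p^kt\epsilon^k)=2\beta+1$), and for $\iota=p(i)$ the unique minimal matching is the diagonal (type $n$), respectively the diagonal composed with the transposition exchanging $p$ and $n$ (type $p<n$), with leading coefficient a product of nonzero constants $j^k-p^k$ and powers of $\epsilon$, while for $\iota\neq p(i)$ every matching costs at least an extra $\beta$. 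So your proof can be completed along the announced lines, but as submitted the decisive step is a plan rather than an argument -- and it is precisely the bookkeeping that the paper's Claim~1/Claim~2 structure is designed to avoid.
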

Proving that $\det(\Lambda)\neq 0$ is again done by analyzing the first terms in~$t$ of $\det(\Lambda)$. More precisely, 
 we are going to show that~$\det(\Lambda)$ factors modulo $t^\alpha$ as: a product of~$U_i$, the Vandermonde determinant $\prod_{i<j} (U_i - U_j)$, and a nonzero polynomial $H(t)\in\K[t]$. 
 
 Before starting with the proof we shall prove the following simple but useful fact: 

 \begin{lem}\label{lemma:invertib_dup_matrix}
Let $\mathbb{F}$ be a field and $m\in\mathbb{N}$. Consider polynomials $A_1, \ldots, A_m\in~\mathbb{F}[x]$ and define 
  $ M \coloneqq (A_i(x_j))_{1 \leq j, i \leq m}$.
Then $\det(M)=0$ if and only if there exist $\lambda_1, \ldots, \lambda_m\in\mathbb{F}$ not all equal to~$0$ such that we have the linear combination~$\sum_{i=1}^m{\lambda_i\cdot A_i(x) = 0}$.
\end{lem}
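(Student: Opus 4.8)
The plan is to prove the two implications separately, the reverse one being essentially free and the forward one by a short induction on $m$. For the ``if'' direction: if $\sum_{i=1}^m \lambda_i A_i(x)=0$ with $\lambda_1,\dots,\lambda_m\in\mathbb{F}$ not all zero, then evaluating this identity at $x=x_j$ for $j=1,\dots,m$ shows that the $\mathbb{F}$-linear combination $\sum_{i=1}^m \lambda_i\,c_i$ of the columns $c_i:=(A_i(x_1),\dots,A_i(x_m))$ of $M$ is the zero vector, hence $\det(M)=0$.

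For the ``only if'' direction I would argue by contraposition, showing that if $A_1,\dots,A_m$ are $\mathbb{F}$-linearly independent then $\det(M)\neq 0$; here $x_1,\dots,x_m$ are understood as independent indeterminates, so that $\det(M)$ is a genuine element of $\mathbb{F}[x_1,\dots,x_m]$. I would induct on $m$. The base case $m=1$ is clear: independence forces $A_1\neq 0$, hence $\det(M)=A_1(x_1)\neq 0$. For the inductive step, I would first apply the inductive hypothesis to $A_1,\dots,A_{m-1}$ (still independent) to get that $D:=\det\big((A_i(x_j))_{1\le i,j\le m-1}\big)\in\mathbb{F}[x_1,\dots,x_{m-1}]$ is nonzero. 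Expanding $\det(M)$ along its last row gives $\det(M)=\sum_{i=1}^m (-1)^{m+i} A_i(x_m)\,D_i$, where every cofactor $D_i$ lies in $\mathbb{F}[x_1,\dots,x_{m-1}]$, in particular is free of $x_m$, and $D_m=\pm D\neq 0$. Viewing this identity over the field $\mathbb{K}:=\mathbb{F}(x_1,\dots,x_{m-1})$, its right-hand side exhibits $\det(M)$ as a $\mathbb{K}$-linear combination of $A_1,\dots,A_m$ in which the coefficient of $A_m$ is the nonzero scalar $\pm D_m$. Since $A_1,\dots,A_m$ remain linearly independent over the field extension $\mathbb{K}\supseteq\mathbb{F}$, this combination is a nonzero polynomial in $\mathbb{K}[x_m]$, i.e. $\det(M)\neq 0$, which closes the induction.

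I do not expect any real obstacle: the argument is elementary and computation-free. The only points that need care are conceptual. First, the statement must be read with $x_1,\dots,x_m$ as formal variables; for arbitrary fixed values it is false (e.g. as soon as two of them coincide, two rows of $M$ agree and $\det(M)=0$ irrespective of the $A_i$). This is harmless for the intended application, where the $x_j$ are later specialized to the pairwise distinct Puiseux series $U_1,\dots,U_{nk}$ of \cref{lem:det_sols}. Second, one uses that linear independence of a finite family of polynomials over $\mathbb{F}$ is preserved after extending scalars to $\mathbb{F}(x_1,\dots,x_{m-1})$, which is exactly what makes the cofactor expansion conclusive. An equivalent way to organize the same induction is to construct evaluation points $\xi_1,\dots,\xi_m$ stage by stage so that $\det\big((A_i(\xi_j))\big)\neq 0$, invoking at each stage that a nonzero polynomial has a non-root; this avoids naming $\mathbb{F}(x_1,\dots,x_{m-1})$ but needs $\mathbb{F}$ infinite (or a passage to an extension), so the formal-variable version above is the cleaner one to write down.
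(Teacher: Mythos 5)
Your proof is correct, and it takes a genuinely different route from the paper's. You handle the easy direction identically (a nontrivial vanishing combination of the $A_i$ gives a vanishing combination of the columns of $M$). For the converse you use induction on $m$: expand $\det(M)$ along the last row, note that the cofactors lie in $\mathbb{F}[x_1,\dots,x_{m-1}]$ with the $m$th one nonzero by the inductive hypothesis, and conclude because linear independence of $A_1,\dots,A_m$ over $\mathbb{F}$ persists over the extension field $\mathbb{F}(x_1,\dots,x_{m-1})$ (rank of the coefficient matrix is detected by a minor and is unchanged under base extension). The paper instead triangularizes by degree: after column operations one may assume $\deg A_1 > \cdots > \deg A_m$, and then the diagonal term $\prod_i A_i(x_i)$ of the Leibniz expansion contributes a monomial $\prod_i x_i^{\deg A_i}$ with strictly decreasing exponents that no other permutation term can reproduce, so $\det(M)\neq 0$ unless some (transformed) $A_i$ vanishes. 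Your argument buys a cleaner treatment of degenerate cases --- the paper needs the slightly awkward convention $\deg(0)=-\infty$ with ``$-\infty>-\infty$'' to make its WLOG reduction airtight --- at the price of invoking stability of linear independence under scalar extension; the paper's leading-monomial argument is shorter and more self-contained. You are also right, and explicit where the paper is implicit, that the statement must be read with $x_1,\dots,x_m$ as indeterminates (it fails for coincident evaluation points), which is indeed how the lemma is applied to the matrix $\tilde{\Lambda}(u_1,\dots,u_{nk})$ before specializing to the pairwise distinct $U_i$.
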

\begin{proof}
Clearly, if the linear combination $\sum_{i=1}^m{\lambda_i\cdot A_i(x) = 0}$ exists, $M$ is singular and its determinant vanishes. For the other direction, we can assume without loss of generality that, up to permutation of columns of $M$ and linear operations on them, $\deg(A_1(x)) > \cdots > \deg(A_n(x))$\footnote{Here $\deg(0) = -\infty$ and we allow us to write $-\infty > -\infty$.}. Then, since $\det(M) = 0$, we must have that $A_i(x) = 0$ for some $i=1,\dots,n$, because the product of the diagonal elements of $M$ cannot cancel otherwise.
\end{proof}

\begin{proof}[Proof of \cref{lem:Lambda}] Recall from (\ref{eq:Rdef}) and (\ref{eq:def_Lambda}) that
\begin{equation*}
\small
   \Lambda \coloneqq \big( R(\partial_{z_j} E_1^{(i)}(U_i), \ldots, \partial_{z_j} E_n^{(i)}(U_i))\big)_{1 \leq i, j+1 \leq nk} \in \overline{\LL}[[t^{\frac{1}{\star}}]]^{nk \times nk},
\end{equation*}
where
\begin{equation*}
\small
  R \coloneqq \det    \begin{pmatrix}
   \partial_{x_1}E_1^{(i)}(U_i) & \dots & \partial_{x_{n-1}}E_1^{(i)}(U_i) & y_1 \\
    \vdots & \ddots & \vdots  & \vdots \\
    \partial_{x_1}E_{n}^{(i)}(U_i)  & \dots & \partial_{x_{n-1}}E_{n}^{(i)}(U_i) & y_n 
    \end{pmatrix}\in\mathbb{K}[\{\partial_{x_\ell} E_j^{(i)}(U_i)\}_{1\leq \ell\leq n-1, 1\leq j\leq n}][y_1, \ldots, y_n].
\end{equation*}
We shall first define and analyze the polynomial matrix $\Lt$:
\begin{equation*}
    \Lt(u_1,\dots,u_n) = \Lt \coloneqq (\Rt_j^{(i)})_{1\leq i, j+1\leq nk}\in\LL[t][u_1, \ldots, u_{nk}]^{nk\times nk}, \text{ where}
\end{equation*}
\begin{align*} 
  \Rt_j \;&:= \det    \begin{pmatrix}
   \partial_{x_1}E_1 & \dots & \partial_{x_{n-1}}E_1 & \partial_{z_j} E_1\\
    \vdots & \ddots & \vdots  & \vdots \\
    \partial_{x_1}E_{n}& \dots & \partial_{x_{n-1}}E_{n}& \partial_{z_j} E_n
\end{pmatrix} \bmod t^\alpha \in\LL[t][u].
\end{align*}
It holds that
\begin{equation} \label{eq:LtL}
 \Lt(U_1,\dots,U_{nk}) \equiv \Lambda \bmod t^\alpha. 
\end{equation}
We will prove \cref{lem:Lambda} in 3 steps corresponding to the 3 claims:
\begin{itemize}
    \item \textbf{Claim 1:} For the matrix $\Lt$ it holds that $\det(\Lt) \neq 0$.
    \item \textbf{Claim 2:} We have that $\det(\Lt) = \prod_{i = 1}^{nk}{u_i^{M - nk}}\cdot 
     \prod_{i<j}{(u_i - u_j)}\cdot H(t)$, for some nonzero polynomial $H(t) \in \LL[t]$ of degree bounded by $n^2k(\beta+1)$.
    \item \textbf{Claim 3:} It holds that $\det(\Lt(U_1,\dots,U_{nk})) \bmod t^\alpha \neq 0$, in particular, we have $\Lambda \neq 0$.
\end{itemize}

Before we start with the proofs of the three claims, we mention that it follows from the definition of $E_\ell$ and our deformation that for~$0\leq j \leq nk-1$ and~$1\leq \ell \leq n$ we have
\begin{equation} \label{eq:parzE}
 \partial_{z_j}E_{\ell}\mod t^\alpha = 
    \begin{cases}
    -\frac{t\epsilon^k\ell^k}{(j-k(\ell-1))!} u^{m_\ell+j - k(\ell-1) - k}, \text{ if } j \in \{ k\ell-k, \ldots, k\ell -1\},\\
    -\frac{t^{\beta+1}\epsilon^k}{h!} u^{m_\ell+ h - k} \text{ else, with }
    h = j \bmod k \text{ and } 0\leq h < k.
    \end{cases}
\end{equation}
We will crucially use the fact that $\partial_{z_j}E_{\ell} \bmod t^\alpha$ is divisible by $u^{m_\ell - k}$ and that the quotient is a monomial in $u$ of degree at most $k-1$.\\

  \textbf{Proof of Claim~$\mathbf{1}$:} 
Assume that~$\det(\Lt)=0$. 
By~\cref{lemma:invertib_dup_matrix} applied to~$\Lt$, there exist polynomials~$\lambda_0, \ldots, \lambda_{nk-1}\in\mathbb{L}[t]$ not all equal to~$0$ such that $\sum_{i=0}^{nk-1}\lambda_i\cdot \Rt_i=0$.
For all~$0 \leq i \leq nk-1$ the matrices associated to the~$R_i$'s share the same~$n-1$ first columns, while the last column is equal to $(\partial_{z_i}E_\ell)_{1\leq \ell \leq n}$. From~\eqref{eq:parzE} it follows that for all~$1\leq i+1, \ell \leq n$ and for all
 $0\leq j \leq k-1$, we have~$\partial_{z_{ik+j}}E_\ell = \frac{u^{j}}{j!} \partial_{z_{{ik}}}E_\ell$. Thus, using
 the multi-linearity of the determinant, it follows that 
  for all~$0 \leq i \leq n-1$ and~$0 \leq j\leq k-1$, we have~$\Rt_{ik+j} = \frac{u^j}{j!}\cdot\Rt_{ik}$. This implies that the linear combination~$\sum_{i=0}^{nk-1}\lambda_i\cdot \Rt_i(u)=0$ rewrites into the form
\begin{equation}\label{eqn:lin_relation}
    \sum_{i=0}^{n-1} \sum_{j=0}^{k-1}\lambda_{ik+j}\cdot \frac{u^{j}}{j!}\cdot \Rt_{ik} =0.
\end{equation}
Moreover, the combination of \eqref{eq:diffxiEl} and \eqref{eq:parzE} implies that all entries the $i$th row of the matrix defining $\tilde{R}_j$ are divisible by~$u^{m_i-k}$. Define 
\[
P_\ell(u) \coloneqq u^{-m_\ell+k}\sum\limits_{i=0}^{n-1} \sum\limits_{j=0}^{k-1}\lambda_{ik+j}\cdot \frac{u^{j}}{j!}\cdot\partial_{z_{ik}}E_\ell\bmod t^\alpha \quad \text{for } \ell = 1,\dots,n,
\]
so that equations \eqref{eq:diffxiEl} and \eqref{eqn:lin_relation}, as well as the multi-linearity of the determinant imply that 
\begin{equation*}\label{eqn:matrix1}
\B(u) \coloneqq \begin{pmatrix}
    -u^k + t \epsilon^k& \epsilon^k t^{\beta+1}& \cdots & \epsilon^k t^{\beta+1} & P_1\\
    \epsilon^k t^{\beta+1} & -u^k + t \epsilon^k 2^k & \cdots & \epsilon^k t^{\beta+1}& P_2 \\
    \vdots & \vdots & \ddots & \vdots& \vdots \\
    \epsilon^k t^{\beta+1}  & \epsilon^k t^{\beta+1}& \cdots & -u^k+ t\epsilon^k  (n-1)^k &  P_{n-1}\\
    \epsilon^k t^{\beta+1} & \epsilon^k t^{\beta+1}& \cdots & \epsilon^k t^{\beta+1}&  P_n
    \end{pmatrix}
\end{equation*}
is a singular polynomial matrix. Observe from \eqref{eq:parzE} that $\deg_u P_\ell < k$ for $\ell = 1,\dots,n$. Using this and $\det(\B) = 0$ we shall now show that $P_1 = \cdots = P_n = 0$.

First note that when computing the determinant of $\B$ using the Leibniz rule, we would get the product of all entries on the main diagonal and products of other terms which will be polynomials in $u$ of degree at most $(n-1)k-1$. The contribution of the main diagonal contains $u^{(n-1)k} P_n$. Clearly, this term cannot cancel with others, unless $P_n$ vanishes. 

Fix some $1 \leq \ell \leq n-1$ and define $\omega_1,\dots,\omega_k \in \overline{\K}[\epsilon][[t^{1/\star}]]$ to be the distinct solutions of $u^k = t \epsilon^k \ell^k - t^{\beta+1} \epsilon^k$. Clearly, $\det(\B(\omega_i)) = 0$ for $i=1,\dots,k$, and by definition of $\omega_i$, the first $n-1$ entries of the $\ell$th row of $\B(\omega_i)$ agree with the first $n-1$ entries of its last row. Therefore, expanding along the last column, we find that
\[
0 = \det(B(\omega_i)) = \pm P_\ell(\omega_i) \cdot \det(\B_{\ell,n}(\omega_i)),
\]
where $\B_{\ell,n}$ denotes the matrix $\B$ with the $\ell$th row and last column removed. By subtracting the $\ell$th column from all others one can easily compute $\det(\B_{\ell,n}(\omega_i))$, in particular it follows that it is nonzero. So we conclude that $P_\ell(\omega_i) = 0$ for $i=1,\dots,k$ and, then, since $\deg_u P_\ell(u) < k$, it finally follows that $P_\ell$ vanishes identically.

By~\eqref{eq:parzE} we have that $\deg_u \partial_{z_{ik}}E_\ell = m_\ell - k$, therefore, looking at the coefficient of $u^j$ of $P_\ell(u)=0$ for $j=0,\dots,k-1$, we obtain with \eqref{eq:parzE} the following linear relations for the~$\lambda_i$'s:
\begin{equation}\label{eqn:diag_key_matrix}
    \begin{pmatrix}
    1 & t^\beta & \cdots & t^\beta\\
    t^\beta & 2^k & \cdots & t^\beta\\
    \vdots & \ddots & \ddots & \vdots \\
    t^\beta & \cdots & \cdots & n^k
    \end{pmatrix}\times \begin{pmatrix}
    \lambda_{j} \\
    \lambda_{k+j}\\
    \vdots \\
    \lambda_{(n-1)k+j}
    \end{pmatrix} = 0, \;\text{ for all $0\leq j\leq k-1$}.
\end{equation}
As the square matrix in \eqref{eqn:diag_key_matrix} is invertible, we obtain that $\lambda_i = 0$ for all $i = 0, \dots, nk-1$ which contradicts our assumption. Hence we have proved that~$\det(\Lt)\neq~0$. \\

\noindent
\textbf{Proof of Claim~2:} We prove the following explicit factorization of $\det(\Lt)$:
\begin{equation} \label{eq:detLfact}
    \det(\Lt) = \prod_{i = 1}^{nk}{u_i^{M - nk}}\cdot \prod_{i<j}{(u_i - u_j)}\cdot H(t),
\end{equation}
where $H(t) \in \K[\epsilon,t]$ is nonzero and of degree in $t$ at most $n^2(\beta+1)$. 

From \eqref{eq:diffxiEl} we know that for~$0\leq j \leq nk-1$, the polynomial $\Rt_j$ is given by 
\begin{equation*}
\det
    \begin{pmatrix}
    u^{m_1} + t \epsilon^k u^{m_1 - k}& t^{b+1}u^{m_1 - k}  & \cdots & \epsilon^k t^{b+1}u^{m_1 - k}  & \partial_{z_{j}}E_1\\
    \epsilon^k t^{b+1}u^{m_2 - k}  & u^{m_2} + t \epsilon^k 2^k u^{m_2 - k}& \cdots & \epsilon^k t^{b+1}u^{m_2 - k} &\partial_{z_{j}}E_2 \\
    \vdots & \vdots & \ddots & \vdots& \vdots \\
    \epsilon^k t^{b+1}u^{m_{n-1} - k}  & \epsilon^k t^{b+1}u^{m_{n-1} - k} & \cdots & u^{m_{n-1}-k} (u^k+ t \epsilon^k (n-1)^k) & 
     \partial_{z_{j}}E_{n-1}\\
   \epsilon^k  t^{b+1}u^{m_{n} - k}  & \epsilon^k t^{b+1}u^{m_{n} - k} & \cdots & \epsilon^k t^{b+1}u^{m_{n} - k} &  \partial_{z_{j}}E_n 
    \end{pmatrix} \bmod t^\alpha.
    \end{equation*}
    It follows from \eqref{eq:parzE} that $ u^{-m_\ell+k}\deg_{z_j}E_\ell$ is a monomial of degree at most $k-1$ in $u$. Since all other terms in the $\ell$th row of $\Rt_j$ are trivially divisible by $u^{m_{\ell}-k}$, and $\alpha > n(\beta+1)$, it follows that $\Rt_j$ factors as~$u^{M-nk}$ times the determinant of the polynomial matrix
    \begin{equation}\label{matrix1}
        \begin{pmatrix}
         u^{k} + t \epsilon^k &\epsilon^k t^{b+1} & \cdots & \epsilon^k t^{b+1}  & u^{-m_1+k}\partial_{z_{j}}E_1\\
    \epsilon^k t^{b+1} & u^{k} + t \epsilon^k 2^k& \cdots & \epsilon^k t^{b+1} &u^{-m_2+k}\partial_{z_{j}}E_2 \\
    \vdots & \vdots & \ddots & \vdots& \vdots \\
    \epsilon^k t^{b+1} & \epsilon^k t^{b+1}& \cdots & u^k+ t \epsilon^k (n-1)^k & 
    u^{-m_{n-1}+k} \partial_{z_{j}}E_{n-1}\\
   \epsilon^k  t^{b+1} & \epsilon^k t^{b+1}& \cdots & \epsilon^k t^{b+1} &  u^{-m_n+k}\partial_{z_{j}}E_n 
    \end{pmatrix} \bmod t^\alpha,
    \end{equation}
    where, as before, $M \coloneqq \sum_{i=1}^n m_i$. Thus, $u^{M-nk}$ divides~$\Rt_j$. Moreover, the degree in~$u$ of the first~$n-1$ columns of~\eqref{matrix1} is upper bounded by~$k$ and the last column of this matrix has degree at most $k-1$, so it follows that~$\deg_u(\Rt_j) \leq M-1$.
    
    Let us consider $\Lt$ which is by definition the matrix $ (\Rt_j^{(i)})_{1\leq i, j+1\leq nk}\in\LL[t][u_1, \ldots, u_{nk}]^{nk\times nk}$. Since $\deg_u(\Rt_j) \leq M-1$, we also have that $\det_{u_i}(\Lt) \leq M-1$ for each $i=1,\dots,nk$. From the considerations above it also follows that $\det(\Lt)$ is divisible by $\prod_{i = 1}^{nk}{u_i^{M - nk}}$. Moreover, it is obvious that if $u_i = u_j$ for some $i\neq j$, the determinant of $\tilde{\Lambda}$ vanishes. Hence, we can also factor out the Vandermonde determinant $\prod_{i<j}{(u_i - u_j)}$ from $\det(\Lt)$. By comparing degrees in each $u_i$ it follows that the remaining factor is a constant with respect to $u_i$ and hence \eqref{eq:detLfact} holds. By \textbf{Claim 1}, $H(t)$ is nonzero, so it remains to prove that $\deg_t H(t) \leq n^2k(\beta+1)$. This follows directly from the fact that the degree in $t$ of the $\Rt_j$'s is bounded by $n(\beta+1)$ as determinants of $(n\times n)$ matrices whose polynomial coefficients have degree in~$t$ upper bounded by~$\beta+1$.

Finally for this step, using the definitions~$\alpha = 3n^2k\cdot(\beta+1)+3nM$ and~$\beta = \lfloor 2 {M}/{k} \rfloor$, we have that~$\alpha > n^2k(\beta+1)$. So that $\deg_t(\det(\Lt))<\alpha$: by~\textbf{Claim~$1$}, we obtain $\det(\Lt)\neq 0\bmod t^\alpha$.\\

\noindent
\textbf{Proof of Claim 3:}  Having established \eqref{eq:detLfact} it is now enough to show that
\begin{equation} \label{eq:valdetLt}
    \val_t(\prod_{i = 1}^{nk}{U_i^{M - nk}}\cdot \prod_{i<j}{(U_i - U_j)}\cdot H(t)) < \alpha.
\end{equation}
Recall from \cref{lem:det_sols} that $U_i = t^{1/k} \epsilon j \zeta^\ell + (\text{higher powers of}~t)$ for $i=1,\dots,nk$, $j=1,\dots,n$ and $\zeta$ a primitive $k$th root of unity. Thus, and because $\deg_t(H) \leq n^2k(\beta+1)$ by \textbf{Claim 2}, the left-hand side of (\ref{eq:valdetLt}) is bounded by~$nk\cdot \frac{M-nk}{k} + nk(nk-1)\cdot \frac{1}{k} + n^2k(\beta+1)$, which is at most~$nM+n^2k(\beta+1)$. Since $\alpha = 3n^2k\cdot (\beta+1)+3nM$, we conclude that~$\det(\Lt(U_1,\dots,U_{nk})) \bmod t^\alpha \neq 0$. Finally, together with (\ref{eq:LtL}) this implies that $\det(\Lambda)$ does not vanish as well.
 \end{proof}
 
Having now proved that $\det(\Jac_{\Sdupe}) \neq 0$ at $\mathcal{P}$, we can apply \cref{lem:jac_ideal} and obtain that the specialized series $G_i(0), \ldots, \partial_u^{k-1}G_i(0)$ are all algebraic over~$\mathbb{K}(t, \epsilon)$. The algebraicity of the complete formal power series $G_1, \ldots, G_n$ over~$\mathbb{K}(t, u, \epsilon)$ then follows again by \cite[Lemma~2.10]{BoChNoSa22} from the invertibility of the Jacobian matrix of $E_1, \ldots, E_n$ considered with respect to the variables $x_1, \ldots, x_n$ (with $t, u, z_0, \ldots, z_{nk-1}$ viewed as parameters). 
The equalities $F_i(t^{\alpha}, u) = G_i(t, u, 0)$ finally imply that $F_1, \ldots, F_n$ are also algebraic over~$\mathbb{K}(t, u)$.

\subsection{Proof of \cref{thm:quantitative_estimates}}\label{subsec:proof_complexity_result}

We prove the quantitative estimates announced in~\cref{thm:quantitative_estimates}. 
 The techniques of the proof being standard in effective algebraic geometry, we will be quite brief in the arguments. For more details on the upper bound on the algebraicity degree, we refer the reader to~\cite[Prop.~2.8]{BoChNoSa22} in the case~$n=k=1$, and to~\cite[Prop.~3]{BoNoSa23} for the more general case~$n=1$ ($k$ arbitrary). For the complexity proof, we refer the reader to~\cite[Prop.~2.9]{BoChNoSa22} and \cite[Prop.~$4$]{BoNoSa23} in the two respective cases. As in the proof of \cref{thm:main_thm}, we assume without loss of generality that~$a=0$. \\

\noindent \textbf{Algebraicity degree bound:} In order to bound the algebracity degree of $F_i(t,u)$, we will provide an upper bound on the algebraicity degree of~$G_i(t, u, \epsilon)$ over~$\K(t, u, \epsilon)$, where, as in the proof of \cref{thm:main_thm}, $(G_1,\dots,G_n)$ denotes the solution of the deformed system (\ref{eqn:deformed_system}). Following the lines of the proof of Theorem~\ref{thm:main_thm}, we shall first give explicit bounds on the total degrees of all equations in~$\Sdupe$. Note that in this part we are interested in the algebraicity degree of $G_i(t,0)$ over $\K(t,\epsilon)$, so in the computation of the total degree we take into account all the variables $u_1, \ldots, u_{nk},x_1,\dots,x_{n^2k},z_0,\dots,z_{nk-1}$ but not $t$ and $\epsilon$. In order to distinguish this restricted notion of total degree from the usual total degree denoted by $\totdeg$, we will write it~$\totdeg_{t,\epsilon}$. So for example, $\totdeg(u_1 t \epsilon) = 3$ but $\totdeg_{t,\epsilon}(u_1t\epsilon) = 1$.

Let~$\delta$ be a bound on $\totdeg_{t,\epsilon}$ of $f_1, \ldots, f_n, Q_1, \ldots, Q_n$; then the total degrees of~$E_1, \ldots, E_n$ are bounded by $\delta (k+1)$. Moreover, $\totdeg_{t,\epsilon}(\Det), \totdeg_{t,\epsilon}(P) \leq n\delta (k+1)$, since $\Det$ and $P$ are determinants of polynomial matrices of size $n\times n$  and degree at most~$\delta (k+1)$.

Recall from \cref{lem:jac_ideal} that the ideal $\mathcal{I}_{{{\operatorname{dup}}}}^\epsilon \subseteq \mathbb{K}(t, \epsilon)[x_1, \ldots, x_{n^2k}, u_1, \ldots, u_{nk}, z_0, \ldots, z_{nk-1}]$ is defined by $nk$ duplications of  the polynomials $E_1, \ldots, E_n, \operatorname{Det}, P$. By \cref{lem:jac_ideal} the saturated ideal $\mathcal{I}_{{{\operatorname{dup}}}}^\epsilon : \det(\operatorname{Jac}_{\Sdupe})^\infty$ is radical and of dimension~$0$, and this yields bounds for the algebraicity degrees of the specializations~$\{\partial_u^iG_j(0)\}_{0\leq i\leq k-1, 1\leq j\leq n}$. More precisely, applying the Heintz-Bézout theorem~\cite[Theorem~$1$]{Heintz83} in the same way as
in~\cite[Prop.~$3.1$]{BoChNoSa22},  one obtains that the algebraicity degree of any $\partial_u^iG_j(0)$ is bounded by~\[\max_{i=1,\dots,n}(\totdeg_{t,\epsilon}(E_i))^{n^2k}\cdot \operatorname{totdeg}_{t,\epsilon}(\operatorname{Det})^{nk}\cdot \totdeg_{t,\epsilon}(P)^{nk} \leq n^{2nk}\cdot (\delta (k+1))^{nk(n+2)}.\]
We denote this bound by $\gamma(n, k, \delta)$.

As in~\cite[Prop.~$4$]{BoNoSa23} there is a group action of the symmetric group $\mathfrak{S}_{nk}$ on the zero set associated to 
the ideal $\langle E_1^{(1)}, \ldots, E_1^{(nk)}, \ldots, E_n^{(1)}, \ldots, E_n^{(nk)},
    \operatorname{Det}^{(1)}, \ldots, \operatorname{Det}^{(nk)}, P^{(1)}, \ldots, P^{(nk)} \rangle$ which permutes
    each of the $nk$ duplicated blocks of coordinates.  As this action preserves the 
    $\{t, z_0, \ldots, z_{nk-1}\}$-coordinate space, one spares the cardinality of the orbits and deduces
    that an algebraicity upper bound on any of the specialized series~$\partial_u^iG_j(0)$ is also given by 
  $  {\gamma(n, k, \delta)/(nk)!}$, that is we have 
    \begin{equation}\label{eq:boundalgdegspec}\algdeg_{\K(t, \epsilon)}(\partial_u^iG_j(0)) \leq  n^{nk} \cdot (\delta(k + 1))^{nk(n+2)}/(nk)!.\end{equation}
    
    Finally, it remains to give algebraicity bounds for the full series $G_1, \ldots, G_n$.
    This is done in the same fashion as above, this time over the field extension
    \begin{equation*}
        \mathbb{K}(t, \epsilon, G_1(0), \ldots, \partial_u^{k-1}G_1(0), \ldots, G_n(0), \ldots, \partial_u^{k-1}G_n(0))/\mathbb{K}(t, \epsilon). 
    \end{equation*}
    Using (\ref{eq:boundalgdegspec}) and the multiplicativity of the degrees in field extensions yields 
    \[
    \algdeg_{\K(t, u, \epsilon)}(G_i(u)) \leq \delta^n(k+1)^n\cdot \left(\frac{n^{2nk}(\delta(k+1))^{nk(n+2)}}{(nk)!}\right)^{nk} \quad \text{ for } i=1,\dots,n,
    \]
    and, by specialization, the same bound then holds for $\algdeg_{\K(t, u)}(F_i(u))$ as well.\\
  
    \noindent
    \textbf{Complexity estimate:} Let us recall that we fix~$\mathbb{K}$ to be an effective field of characteristic~$0$ and we count the number of elementary operations
    $(+, -, \times, \div)$ in~$\mathbb{K}$. 
    For proving the arithmetic complexity estimate in~\cref{thm:quantitative_estimates}, we rely on the use of the parametric geometric resolution 
    (see~\cite{Schost03} for details). For this purpose we need:
    \begin{itemize}
        \item the number of parameters and of variables of the input equations of interest,
    \item an upper bound on the degree (with respect to all the variables and parameters) of the saturated ideal~$\mathcal{I}_{{{\operatorname{dup}}}}^\epsilon:\det(\Jac_{\Sdupe})^\infty$.
    \end{itemize}
    In our setting the parameters are~$t$ and~$\epsilon$ so we have two of them, and the variables are \[x_1, \ldots, x_{n^2k}, z_0, \ldots, z_{nk-1}, u_1, \ldots, u_{nk},\] so $nk(n+2)$ in total.
    
    For computing an upper bound $\nu(n,k,\delta)$ on the degree of the ideal~$\mathcal{I}_{{{\operatorname{dup}}}}^\epsilon:\det(\Jac_{\Sdupe})^\infty$, we again apply the Heintz-Bézout theorem. Note that this time we must take into account the total degree of the input system~$\Sdupe$ with respect to both the variables and the parameters. By the same arguments as in the first part of the proof and using the definitions of $\alpha,\beta$, we find
  
    \[
    {\nu(n, k, \delta)} = n^{2nk}(\delta\cdot(k+1) + 3n^2k\cdot(2n\delta+1)+3nM)^{nk(n+2)}.
    \]
    Without loss of generality, we only address the computation of an annihilating polynomial of $G_1(0)$ (and consequently of $F_1(0)$). The algorithm we propose consists of two steps:
    
    {\em Step~$1$:} For $\lambda$ a new variable which is a random $\K[t,\epsilon]$-linear combination of all the variables involved in~$\mathcal{S}_{{{\operatorname{dup}}}}^\epsilon$, we compute two polynomials $V(t,\epsilon,\lambda), W(t,\epsilon,\lambda) \in \K[t, \epsilon][\lambda]$, such that: $G_1(0)$ is a
    solution\footnote{More precisely: the $z_0$th coordinate of a solution of $\Sdupe$ which is not a solution of~$\det(\Jac_{\Sdupe})=0$.} of $\Sdupe$ and not a solution of~$\det(\Jac_{\Sdupe})=0$ if and only if $G_1(0) = V(t, \epsilon, \lambda_0)/\partial_\lambda W(t, \epsilon, \lambda_0)$ for $\lambda_0 \in \overline{\K(t, \epsilon)}$ a solution of $W(t, \epsilon, \lambda)=0$.
    
    {\em Step~$2$:} Compute the squarefree part~$R\in\K[t, \epsilon, z_0]$ of the resultant of~$z_0 \cdot \partial_\lambda W(t, \epsilon, \lambda) - V(t, \epsilon, \lambda)$ and~$W(t, \epsilon, \lambda)$ with respect to~$\lambda$.  
    
    By definition, $\lambda_0$ is a solution of $\partial_\lambda W(t, \epsilon, \lambda) G_1(0) - V(t, \epsilon, \lambda) = 0$ and $W(t,\epsilon,\lambda) = 0$, so $R(t,\epsilon,G_1(0)) = 0$ by the property of the resultant. Moreover, it follows from the eigenvalue theorem~\cite[Theorem~$1$]{Cox21} and the fact that the dimension of the radical ideal $\mathcal{I}_{{\operatorname{dup}}}^\epsilon:\det(\Jac_{\Sdupe})^\infty$ is zero, that $R$ is a nonzero polynomial.
    
    By~\cite[Theorem~$2$]{Schost03}, performing \textit{Step 1} can be done using~$\tilde{O}((Ln^2k+(n^2k)^4)\cdot \nu(n, k, \delta)^3)$ operations in~$\K$, where $L$ denotes the complexity of evaluating the duplicated system~$\Sdupe$ at all variables and parameters. Using the Bauer-Strassen theorem~\cite[Theorem~$1$]{BaSt83}, the cost for evaluating~$\Det$ and~$P$ is in~$O(nL')$, where~$L'$ is the cost for evaluating any of the~$E_1, \ldots, E_n$. The total degrees of the~$E_i$'s are bounded by~$d' := \delta\cdot(k+1) + 3n^2k\cdot(2n\delta+1)+3nM$, so ~$L'\in O((d')^{nk+n+3}))$, since~$nk+n+3$ is the number of variables and parameters. Duplicating~$nk$ times the initial polynomials~$E_1, \ldots, E_n, \Det, P$, it follows that~$L\in O(n^3k L')\subset O(n^3k(d')^{nk+n+3})$. To do \textit{Step~2}, we perform evaluation-interpolation on~$t$ and~$\epsilon$. This requires to use in total~$O(\nu(n, k, \delta)^2)$ distinct points from the base field~$\K$. For each specialization of $z_0 \cdot \partial_\lambda W(t, \epsilon, \lambda) - V(t, \epsilon, \lambda)$ and~$W(t, \epsilon, \lambda)$ to~$t=\theta_1 \in \K$ and~$\epsilon=\theta_2 \in \K$, we compute the squarefree part of the bivariate resultant  of~$\partial_\lambda W(\theta_1, \theta_2, \lambda)\cdot z_\ell - V(\theta_1, \theta_2, \lambda)$ and $W(\theta_1, \theta_2, \lambda)$ with respect to~$\lambda$. Using~\cite{Villard18}, this requires~$\tilde{O}((\nu(n, k, \delta)/(nk)!)^{3.63})$ operations in~$\mathbb{K}$.

    In total we obtain that the arithmetic complexity is bounded by~$(2nk\delta)^{O(n^2k)}$ operations in~$\mathbb{K}$. Deducing
    an annihilating polynomial of~$F_1(t, 0)$ from the one annihilating~$G_1(t, \epsilon=0, u=0)$ is included in the previous complexities. This proves~\cref{thm:quantitative_estimates}. \qed
    
\section{Elementary strategies for solving systems of DDEs}\label{sec:practical_aspects}

The present section aims at studying two strategies available at this time for solving a system of DDEs in \emph{practice}. Both approaches have the advantage to be much more efficient than the algorithm resulting from \cref{thm:main_thm}, however they are not guaranteed to always terminate. As before, the goal is, given a system of DDEs of the form~\eqref{eqn:init_system}, to compute a nonzero polynomial~$R\in\K[t, z_0]$ such
that~$R(t, F_1(t, a))=~0$. 

First, in~\cref{sec:duplicate}, we summarize the algorithm underlying the duplication strategy that was used in the proof of~\cref{thm:main_thm} and we analyze its arithmetic complexity \emph{in the case where no symbolic deformation is necessary} (the arithmetic cost for solving a system of DDEs when such a symbolic deformation is needed was the content of~\cref{thm:quantitative_estimates}). In~\cref{sec:elim_duplicate}, we study the practical approach introduced and used in~\cite[Section~$11$]{BMJ06}: this approach consists in \emph{reducing} the algorithmic study of a system of DDEs to the \emph{study of a single functional equation} (which may not be of a fixed-point type). Once we are left with this single equation, we apply the geometry-driven algorithm from~\cite[Section~$5$]{BoNoSa23} which avoids the duplication of the variables.

\subsection{The classical duplication of variables algorithm}\label{sec:duplicate}
\subsubsection{General strategy}\label{subsec:general_strategy_duplicate}
Consider the system of DDEs
\begin{equation}\label{eqn:initDDEsec4}
    \begin{cases}  
\textbf{\textup{\text{(E}}}_{\textbf{\textup{\text{F}}}_\textbf{\textup{\text{1}}}} \textbf{\textup{\text{):}}}
    \;\;\;F_1 = f_1(u) + t\cdot Q_1(\nabla_a^k F_1, \ldots, \nabla_a^k F_n, t, u),\\
   \indent \vdots \hfill \vdots\;\;\;\;\; \\
\textbf{\textup{\text{(E}}}_{\textbf{\textup{\text{F}}}_\textbf{{\text{n}}}} \textbf{\textup{\text{):}}}\;\;F_n = f_n(u) + t\cdot Q_n(\nabla_a^k F_1, \ldots, \nabla_a^k F_n, t, u),
    \end{cases}
\end{equation}
introduced in~\cref{thm:main_thm} and denote by~$\Sdup$ the duplicated polynomial system obtained~by 
\begin{itemize}
\item considering the polynomials $E_1, \ldots, E_n\in\K(t)[x_1, \ldots, x_n, u, z_0, \ldots, z_{nk-1}]$
  associated to the ``numerator equations''\footnote{For all~$1\leq i \leq n$,
  $E_i(u) \equiv E_i(F_1, \ldots, F_n, u, F_1(t, a), \ldots, \partial_u^{k-1}F_1(t, a), \ldots, F_n(t, a),
  \ldots, \partial_u^{k-1}F_n(t, a))$.}
  of the system of DDEs~\eqref{eqn:initDDEsec4}, as well as the polynomials
      \[
        \Det := 
        \det \begin{pmatrix}
            \partial_{x_1}E_1 & \dots & \partial_{x_n}E_1\\
            \vdots & \ddots & \vdots\\
            \partial_{x_1}E_n & \dots & \partial_{x_n}E_n\\
        \end{pmatrix} \quad
        \text{ \normalsize{and} } \quad
        P:= 
        \det 
    \begin{pmatrix}
    \partial_{x_1}E_1 & \dots & \partial_{x_{n-1}}E_{1} & \partial_{u}E_1\\
    \vdots & \ddots & \vdots & \vdots \\
    \partial_{x_1}E_n & \dots & \partial_{x_{n-1}}E_{n} & \partial_{u}E_n\\
    \end{pmatrix},\]
  \item  duplicating~$nk$ times the variables $x_1, \ldots, x_n, u$ in the polynomials~$(E_1, \ldots, E_n, \Det, P)$
    in order to obtain~$nk$ duplications
    of the polynomials~$(E_1, \ldots, E_n, \Det, P)$ in the new variables~$x_1, \ldots, x_{n^2k}$,
    $u_1, \ldots, u_{nk}$ and (unchanged variables)~$z_0, \ldots, z_{nk-1}$ (as in~\cref{ex:cont1_toy} in the case~$n=2$ and~$k=1$).
\end{itemize}

\begin{ex2}
We consider~\cref{ex:eq27}, where~$k=1$ and~$n=2$. We have the polynomials \[\begin{cases} E_1 := -(x_1-1)(u-1)+tu(2ux_1^2-uz_0+2uz_2-2x_1^2+u+x_1-2z_2-1),\\ E_2 := -x_2(u-1)+tu(2ux_1x_2+ux_1-2x_1x_2-x_1+x_2-z_2).\end{cases}\]
We compute~\begin{align*}
    \Det &= \det\begin{pmatrix} \partial_{x_1}E_1 & \partial_{x_2}E_1 \\ \partial_{x_1}E_2 & \partial_{x_2}E_2\end{pmatrix}
    = (4tu^2x_1-4tux_1+tu-u+1)(2tu^2x_1-2tux_1+tu-u+1).
\end{align*}
and the rather large polynomial $P = \det\begin{pmatrix} \partial_{x_1}E_1 & \partial_{u}E_1 \\ \partial_{x_1}E_2 & \partial_{u}E_2\end{pmatrix}$. We obtain by the duplication strategy the set~$\Sdup$ of polynomials given by
\begin{align*}
    \Sdup := \{&(E_1(x_1, x_2, u_1, z_0, z_1), E_2(x_1, x_2, u_1, z_0, z_1), \Det(x_1, x_2, u_1, z_0, z_1), P(x_1, x_2, u_1, z_0, z_1),\\
        &E_1(x_3, x_4, u_2, z_0, z_1), E_2(x_3, x_4, u_2, z_0, z_1), \Det(x_3, x_4, u_2, z_0, z_1), P(x_3, x_4, u_2, z_0, z_1)\}.
\end{align*}
Observe that~$\Sdup$ is built from~$8$ equations in the~$8$ unknowns~$x_1, x_2, x_3, x_4, u_1, u_2, z_0, z_1$ and the parameter~$t$.
\end{ex2}

Assuming that we did not deform~\eqref{eqn:initDDEsec4} using~\eqref{eqn:deformed_system}
(and thus that we did not introduce any deformation parameter~$\epsilon$),
it follows by construction that~$\Sdup$ is defined in the polynomial
ring~$\K(t)[x_1, \ldots, x_{n^2k}, u_1, \ldots, u_{nk}, z_0, \ldots, z_{nk-1}]$, and is built
from~$nk(n+2)$ equations in~$nk(n+2)$ unknowns ($t$ being considered as a parameter).
We introduce the extra
polynomial~$\operatorname{sat} := \prod_{i\neq j}{(u_i - u_j)(u_i-a)t}\in\K(t)[u_1, \ldots, u_{nk}]$
and define the saturated ideal~$\Idupinfty := \langle \Sdup \rangle : \operatorname{sat}^\infty$ in
$\K(t)[x_1, \ldots, x_{n^2k}, u_1, \ldots, u_{nk}, z_0, \ldots, z_{nk-1}]$. Recall that in practice, a generating set of the ideal~$\Idupinfty$ can be obtained by
computing a Gr\"obner basis of~$\langle \Sdup, m\cdot \operatorname{sat}-1\rangle
\cap\K(t)[x_1, \ldots, x_{n^2k}, u_1, \ldots, u_{nk}, z_0, \ldots, z_{nk-1}]$, where~$m$ is an extra variable introduced to remove the solution set of the equation~$\operatorname{sat}=0$ (see~\cite[p.~$205$, Thm.~$14$]{CoLiOSh15}]).~$\Idupinfty := \langle \Sdup \rangle : \operatorname{sat}^\infty$ in
$\K(t)[x_1, \ldots, x_{n^2k}, u_1, \ldots, u_{nk}, z_0, \ldots, z_{nk-1}]$.
In \eqref{hyp1} below we shall assume that~$\Idupinfty$ has expected dimension~$0$ over~$\K(t)$, without any multiplicity points, and we also assume that there exist~$nk$ distinct solutions to $\Det(u)=0$ in~$u$, all of them lying in~$\overline{\K}[[t^{\frac{1}{\star}}]]\setminus\overline{\K}$. All systems of DDEs that can be solved with the duplication approach satisfy this assumption, up to removing the multiplicity points (which is harmless for our objective of solving systems of~DDEs).
\begin{align}\label{hyp1}\tag{$\mathbf{H1}$}
\underline{\text{Hypothesis}~\mathbf{1}:} \nonumber&\bullet \;\Det(u)=0 \text{ admits } nk \text{ distinct solutions (in~$u$)} \text{ in } \overline{\K}[[t^{\frac{1}{\star}}]]\setminus \overline{\K}.\\
\nonumber& \bullet \;\text{The ideal~}\Idupinfty \text{ is radical of dimension}~0\text{ over~}\K(t).
\end{align}\vspace{-0.5cm}

\noindent Under~\eqref{hyp1}, we denote by~$U_1, \ldots, U_{nk}\in\overline{\K}[[t^{\frac{1}{\star}}]]\setminus \overline{\K}$ the distinct solutions to~$\Det(u)=0$.

%
Recall that after duplication of the variables~$x_1, \ldots, x_n, u$ in the initial set of
  polynomials~$(E_1, \ldots, E_n, \Det, P)$, we took the convention in~\cref{sec:proof_thm} to have
  for all~$1\leq i \leq n$, ~$1 \leq j \leq nk$ and all~$0\leq \ell \leq k-1$,
  the correspondence ``variables~$\leftrightarrow$~values'' as follows:
\begin{center}
  $x_{(j-1)n+i}\leftrightarrow F_i(t, U_j(t))$, \quad $u_j\leftrightarrow U_j(t)$, \quad 
  $z_{(i-1)k+\ell}\leftrightarrow (\partial_u^\ell F_i)(t, a)$. 
\end{center}
It follows from the second part of~\eqref{hyp1} that the elimination ideal~$\Idupinfty\cap\K[t, z_0]$ is not~$\{0\}$. In spirit of~\cite[Proposition~$2$]{BoNoSa23} and~\cite[Proposition~$2.1$]{BoChNoSa22}, we can show that
any nonzero element of~$\Idupinfty\cap\K[t, z_0]$ annihilates the series~$F_1(t, a)$.
\begin{lem}\label{prop:elimdup}
  Under~\eqref{hyp1}, any element $R\in\Idupinfty\cap\K[t, z_0]\setminus\{0\}$ satisfies
  $R(t, F_1(t, a))=0$.
\end{lem}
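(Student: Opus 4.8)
The plan is to show that the relevant ``series point'' $\mathcal{P}$ built from the specializations of the $F_i$ lies in the zero set $V(\Idupinfty)$ over $\overline{\K(t)}$, so that every polynomial in the elimination ideal $\Idupinfty\cap\K[t,z_0]$ vanishes at the corresponding $(t,F_1(t,a))$. Concretely, for each $1\le j\le nk$ set the coordinates according to the stated dictionary: $x_{(j-1)n+i}=F_i(t,U_j(t))$, $u_j=U_j(t)$, and $z_{(i-1)k+\ell}=(\partial_u^\ell F_i)(t,a)$, all viewed as elements of $\overline{\K}[[t^{1/\star}]]$. I would first recall why this point annihilates the generators of $\langle\Sdup\rangle$: the equations $E_i(u)=0$ hold identically in $u$ by construction (they are the numerators of the original DDEs with the $F_i$ and their specializations plugged in), hence in particular at $u=U_j$; the equation $\Det(u)=0$ holds at $u=U_j$ by the very definition of the $U_j$ as the $nk$ distinct non-constant Puiseux solutions of $\Det(u)=0$ granted by the first bullet of \eqref{hyp1}; and $P(u)=0$ at $u=U_j$ follows from the matrix identity \eqref{eqn:En_diff_u} (differentiating the $E_i(u)=0$ in $u$), because at a zero of $\Det$ the vector $(\partial_u F_1,\dots,\partial_u F_n)$ lies in the kernel of the coefficient matrix, forcing the augmented determinant $P$ to vanish — this is exactly the kernel argument used in \cref{sec:generic_equations_of_first_order} and in the proof of \cref{lem:det_Jaci}. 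Thus each duplicated block $\mathcal{S}_j$ vanishes at $\mathcal{P}$, so $\mathcal{P}\in V(\langle\Sdup\rangle)$.

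Next I would pass from $\langle\Sdup\rangle$ to the saturation $\Idupinfty=\langle\Sdup\rangle:\operatorname{sat}^\infty$ with $\operatorname{sat}=\prod_{i\ne j}(u_i-u_j)(u_i-a)\,t$. Saturation only enlarges the ideal by elements that vanish on $V(\langle\Sdup\rangle)\setminus V(\operatorname{sat})$ up to the usual Nullstellensatz subtlety; the clean way is to observe that $\mathcal{P}$ itself avoids $V(\operatorname{sat})$: the $U_j$ are pairwise distinct (first bullet of \eqref{hyp1}), none of them equals $a$ (they are non-constant Puiseux series whereas $a\in\K$), and $t\ne0$ as an element of $\overline{\K}[[t^{1/\star}]]$. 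Hence $\operatorname{sat}(\mathcal{P})\ne0$. If $R\in\Idupinfty$, then $\operatorname{sat}^s\cdot R\in\langle\Sdup\rangle$ for some $s\ge0$, so $\operatorname{sat}(\mathcal{P})^s\,R(\mathcal{P})=0$, and since $\operatorname{sat}(\mathcal{P})$ is a nonzero (hence invertible) element of the field of Puiseux series, $R(\mathcal{P})=0$. In particular this holds for any $R\in\Idupinfty\cap\K[z_0,t]$, and for such an $R$ we have $R(\mathcal{P})=R(t,z_0)\big|_{z_0=F_1(t,a)}=R(t,F_1(t,a))$, which is therefore $0$. Finally, the second bullet of \eqref{hyp1} (that $\Idupinfty$ is zero-dimensional over $\K(t)$) guarantees $\Idupinfty\cap\K[z_0,t]\ne\{0\}$, so the statement is not vacuous — this is the standard fact that a zero-dimensional ideal over $\K(t)$ meets each coordinate subring $\K(t)[z_0]$ nontrivially, after clearing denominators in $t$.

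The only genuinely delicate point is the verification that $P(U_j)=0$, i.e. that the replacement of the last column of the Jacobian $(\partial_{x_\ell}E_i)$ by $(\partial_u E_i)$ produces a determinant vanishing at the roots of $\Det$; everything else is bookkeeping. This follows because at $u=U_j$ the square matrix $(\partial_{x_\ell}E_i(U_j))$ has nontrivial kernel containing $(\partial_uF_1(t,U_j),\dots,\partial_uF_n(t,U_j))$ by \eqref{eqn:En_diff_u}, so appending the column $(\partial_uE_i(U_j))=-(\partial_{x_\ell}E_i(U_j))\cdot(\partial_uF_\ell)$ keeps the matrix singular — the new column is a linear combination (over the Puiseux field) of the old ones together with the zero relation, hence $P(U_j)=0$. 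I expect this to be the main obstacle only in the sense of needing care with which series are substituted where; conceptually it is the same kernel-method identity already exploited throughout \cref{sec:proof_thm}.
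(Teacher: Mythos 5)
Your proposal is correct and follows essentially the same route as the paper's proof: write $\operatorname{sat}^s\cdot R$ as a combination of the duplicated generators, specialize to the series point $\mathcal{P}$ where those generators vanish (with $P(U_j)=0$ coming from the kernel identity \eqref{eqn:En_diff_u}, as the paper establishes just before \cref{lem:det_Jaci}), and use \eqref{hyp1} to see that $\operatorname{sat}$ does not vanish there. Your write-up merely makes explicit the vanishing of the generators at $\mathcal{P}$, which the paper's terser proof leaves implicit.
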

\begin{proof}
 First, it follows from~\eqref{hyp1} that the elimination ideal~$\Idupinfty\cap\K[t, z_0]$ is not~$\{0\}$.
 Using~$R\in\Idupinfty$ and the definition of the saturation, it follows that there exists some~$\ell\in\mathbb{N}$ such that $R\cdot(\prod_{i\neq j}(u_i - u_j)(u_i-a)t)^\ell$ can be written as a sum of multiples of the duplicated polynomials obtained from~$E_1, \ldots, E_n, \Det, P$. Specializing the resulting expression to the values~$x_{(j-1)n+i}= F_i(t, U_j(t))$, $u_j = U_j(t)$, $z_{(i-1)k+\ell} = (\partial_u^\ell F_i)(t, a)$ and using~\eqref{hyp1} implies that~$U_i(t)\neq U_j(t)$ and~$U_i(t)\neq a$ (whenever~$i\neq j$). This implies that~$R(t, F_1(t, a))=~0$.
\end{proof}

\begin{ex2}\label{ex17}
Consider~$\Sdup$, and compute a nonzero generator~$R\in\Q[t, z_0]$ of~$\langle \Sdup, m\cdot (u_1-u_2)\cdot (u_1-1)\cdot (u_2-1)t-1\rangle \cap\Q[t, z_0]$. For this computation,
which is not easy to perform with a naive approach, we rely on a recent work by the first author~\cite{Notarantonio23} which provides a Maple package with efficient implementations of algorithms for solving DDEs: one of these computes efficiently such an elimination polynomial. We obtain in a few seconds on a regular laptop that
\begin{align*}
    R \;=\; &(64t^3z_0^3+2t(24t^2-36t+1)z_0^2+(-15t^3+9t^2+19t-1)z_0+t^3+27t^2-19t+1)\\
    &\cdot (z_0-1)\cdot (2tz_0+t-1)\cdot (36-60z_0+t\cdot \widetilde{R}(t,z_0))
\end{align*}
annihilates~$F_1(t, 1)$; here we write $\widetilde{R}(t,z_0) \in \Q[t,z_0]$ for an explicit but rather big polynomial. Moreover, as~$F_1 = 1 + O(t)$ and~$F_1$ is not a constant, we can refine our conclusion and identify that the first factor of~$R$ is the minimal polynomial of~$F_1(t, 1)$. 
\end{ex2}

\subsubsection{Refined complexity and size estimates}
We prove a version of~\cref{thm:quantitative_estimates} in the case where the symbolic
deformation is not needed. In this context, we provide an upper bound on
the algebraicity degree of~$F_1(t, a)$ over~$\K(t)$. Also, we estimate the arithmetic 
complexity for the computation of a nonzero element of~$\Idupinfty\cap\K[t, z_0]$. 
\begin{prop}\label{cor:complexity}
  Let~$\K$ be a field of characteristic~$0$. Consider the system of
  DDEs~\eqref{eqn:initDDEsec4}, assume that~\eqref{hyp1} holds and denote by~$\delta$ an upper bound on the total degrees of~$f_1, \ldots, f_n, Q_1, \ldots, Q_n$. Then
  the algebraicity degree of~$F_1(t, a)$
  over~$\K(t)$ is bounded by~$n^{2nk}(\delta(k+1)+1)^{nk(n+2)}/(nk)!$.
  Moreover
  when~$\mathbb{K}$ is effective, there exists an algorithm computing a 
  nonzero polynomial~$R\in\K[t, z_0]$ such that $R(t, F_1(t, a))=0$,
  in~$\tilde{O}((nk\delta)^{5.26n(nk+k+1)})$ operations in~$\K$.
\end{prop}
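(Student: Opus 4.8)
The plan is to mirror the two-step argument from the proof of~\cref{thm:quantitative_estimates} --- a parametric geometric resolution followed by a resultant --- while exploiting the two simplifications that~\eqref{hyp1} affords over the deformed setting of~\cref{thm:main_thm}: there is no deformation parameter, so the only parameter is~$t$, and the polynomials~$E_1, \ldots, E_n$ are the genuine numerators of~\eqref{eqn:initDDEsec4} rather than those of a system inflated by the exponent~$\alpha$. Both facts are what bring the exponent down from the~$40(n^2k+1)$ of~\cref{thm:quantitative_estimates}: the input degrees stay in~$O(\delta k)$, and evaluation--interpolation over the parameter space costs~$O(\nu)$ instead of~$O(\nu^2)$ points, where~$\nu$ is a B\'ezout bound on the degree of~$\Idupinfty$.

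\textbf{Algebraicity degree.} I would first record a routine degree count. Since the iterated operator~$\Delta_a^k$ contributes a pole of order at most~$k$ per argument and each~$Q_i$ has at most~$\delta$ arguments, each clearing exponent satisfies~$m_i \le k\delta$, whence~$\totdeg(E_i) \le \delta(k+1)+1$ and therefore~$\totdeg(\Det), \totdeg(P) \le n(\delta(k+1)+1)$ as determinants of~$n\times n$ matrices of first partial derivatives of the~$E_i$. By~\eqref{hyp1} the saturated ideal~$\Idupinfty$ is zero-dimensional and radical over~$\K(t)$, so the Heintz--B\'ezout theorem~\cite{Heintz83}, applied as in~\cite[Prop.~3.1]{BoChNoSa22}, bounds the degree of~$\Idupinfty$, hence the degree in~$z_0$ of any nonzero element of~$\Idupinfty \cap \K[t, z_0]$, by
\[
\max_i \totdeg(E_i)^{n^2k} \cdot \totdeg(\Det)^{nk} \cdot \totdeg(P)^{nk} \;\le\; n^{2nk}\,(\delta(k+1)+1)^{nk(n+2)}.
\]
Finally, as in~\cite[Prop.~4]{BoNoSa23}, the symmetric group~$\mathfrak{S}_{nk}$ acts on the zero set of~$\Sdup$ by permuting the~$nk$ duplicated blocks of coordinates while fixing the~$(t, z_0)$-coordinate space, so the bound above may be divided by~$(nk)!$. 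Combined with~\cref{prop:elimdup}, which ensures that such a nonzero elimination polynomial annihilates~$F_1(t, a)$, this gives the claimed bound on~$\algdeg_{\K(t)}(F_1(t, a))$.

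\textbf{The algorithm and its cost.} When~$\K$ is effective, I would realize~$\Idupinfty$ by adjoining a variable~$m$ together with the relation~$m\cdot\operatorname{sat} - 1$; by~\eqref{hyp1} the resulting ideal is zero-dimensional over~$\K(t)$ in~$nk(n+2)+1$ variables with the single parameter~$t$, so the parametric geometric resolution of~\cite{Schost03} applies. Re-running Heintz--B\'ezout, this time counting~$t$ in the total degrees, bounds the degree~$\nu = \nu(n,k,\delta)$ of this ideal by~$n^{2nk}(\delta(k+1)+1)^{nk(n+2)}$ times a factor polynomial in~$nk$ coming from the generator~$m\cdot\operatorname{sat}-1$. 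In \emph{Step~1}, for a random~$\K[t]$-linear form~$\lambda$ I compute~$V(t,\lambda), W(t,\lambda) \in \K[t][\lambda]$ parametrizing the~$z_0$-coordinate of the solutions lying off~$\{\det(\Jac_{\Sdup}) = 0\}$; by~\cite[Theorem~2]{Schost03} this costs~$\tilde{O}\big((L\cdot nk(n+2) + (nk(n+2))^4)\cdot\nu^3\big)$ operations in~$\K$, where~$L$ is the evaluation cost of~$\Sdup$, which by the Bauer--Strassen theorem~\cite{BaSt83} applied to the determinants~$\Det$ and~$P$ is~$L = O\big(n^2k\,(\delta(k+1)+1)^{nk+n+3}\big)$. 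In \emph{Step~2}, I compute the squarefree part~$R \in \K[t, z_0]$ of~$\operatorname{Res}_\lambda\big(z_0\cdot\partial_\lambda W(t,\lambda) - V(t,\lambda),\, W(t,\lambda)\big)$; it is nonzero by the eigenvalue theorem~\cite{Cox21} together with the zero-dimensionality and radicality of~$\Idupinfty$, and it satisfies~$R(t, F_1(t,a)) = 0$ by the defining property of the resultant and~\cref{prop:elimdup}. Because~$t$ is now the only parameter, I perform evaluation--interpolation in~$t$ alone, using~$O(\nu)$ specializations~$t = \theta \in \K$; for each of them, exploiting that the~$\mathfrak{S}_{nk}$-symmetry makes only a factor of degree~$\le \nu/(nk)!$ relevant, the corresponding bivariate resultant in~$(\lambda, z_0)$ is computed in~$\tilde{O}\big((\nu/(nk)!)^{3.63}\big)$ operations \`a la~\cite{Villard18}.

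\textbf{Conclusion and main obstacle.} Summing the costs of Steps~1 and~2, substituting~$\delta(k+1)+1 = O(nk\delta)$, bounding~$M \le nk\delta$, and simplifying the exponents using the~$(nk)!$ gained from the block symmetry, yields the announced~$\tilde{O}\big((nk\delta)^{5.26\,n(nk+k+1)}\big)$ bound. I do not expect any single estimate here to be the difficulty; the real work is the combined bookkeeping --- getting the degree count for the non-deformed~$E_i$ right (the clearing exponents~$m_i$ may be as large as~$k\delta$), carefully propagating the~$(nk)!$ saving simultaneously through the algebraicity bound and through the degree of the polynomials fed to the resultant, and making precise that the single-parameter setting replaces the~$O(\nu^2)$ evaluation--interpolation of~\cref{thm:quantitative_estimates} by~$O(\nu)$ --- so as to land exactly on the stated exponent. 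All the geometric input that is needed, namely zero-dimensionality, radicality, and the identification of the relevant~$z_0$-coordinate with~$F_1(t,a)$, is supplied by~\eqref{hyp1} and~\cref{prop:elimdup}.
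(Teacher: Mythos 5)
Your proposal is correct and follows essentially the same route as the paper's proof: the bound $m_i\le k\delta$ giving $\totdeg(E_i)\le \delta(k+1)+1$, the Heintz--B\'ezout bound on $\deg(\Idupinfty)$ divided by $(nk)!$ via the block-permutation action of $\mathfrak{S}_{nk}$, then a parametric geometric resolution over the single parameter $t$ followed by evaluation--interpolation and Villard-style bivariate resultants. The only (harmless) deviations are bookkeeping-level: the paper saturates by $\operatorname{sat}$ rather than by $\det(\Jac_{\Sdup})$ in Step~1 (radicality being already guaranteed by~\eqref{hyp1}), and it quotes the geometric-resolution cost with $\gamma^2$ where you write $\nu^3$ --- both are absorbed in the final soft-$O$.
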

\begin{proof}
  In spirit of the proof of~\cref{thm:quantitative_estimates}:

  \begin{itemize}
  \item Step 1: We compute an upper bound on the degree of the ideal~$\Idupinfty$ and then deduce an upper bound on the algebraicity degree of~$F_1(t, a)$.
  \item Step 2: We make explicit the number of variables and parameters in~$\Sdup$.
  \item Step 3: We describe and apply
    an algorithm which is itself based on the algorithm of~\cite[Theorem~$2$]{Schost03}
     in order to compute
    a nonzero annihilating polynomial of~$F_1(t, a)$.
    \item Step 4: Finally, we
      use the arithmetic complexity of the algorithm on which~\cite[Theorem~$2$]{Schost03} relies in order to estimate the complexity
      of our algorithm.
  \end{itemize}
  In order to define the polynomials~$E_1, \ldots, E_n \in \K(t)[x_1, \ldots, x_n, u, z_0, \ldots, z_{n k-1}]$
  associated to the numerator equations of~\eqref{eqn:initDDEsec4}, recall that it is necessary to multiply
  the~$i$th DDE in~\eqref{eqn:initDDEsec4} by a power~$m_i\in\mathbb{N}$ of~$(u-a)$. The polynomials~$Q_i$ being of total degree upper bounded by~$\delta$, and the DDEs considered being of order~$k$, it results that each~$m_i$ is upper bounded by~$k\delta$. So each of the~$E_i$'s has total degree upper bounded by~$d := \delta(k+1)+1$. Thus~$\Det$ and~$P$ have their respective total degrees bounded by~$nd$ as determinants of~$(n\times n)$-matrices whose entries are polynomials of total degrees bounded by~$d$. It remains to see that because of the radicality assumption in~\eqref{hyp1}, the Heintz-Bézout theorem~\cite[Theorem~$1$]{Heintz83} applies and implies that the degree of the ideal~$\Idupinfty$ is bounded by the product of the total degrees of the $nk(n+2)$ duplications~$E_1^{(1)}, \ldots, E_n^{(1)}, \Det^{(1)}, P^{(1)}$, $\ldots, E_1^{(nk)}, \ldots, E_n^{(nk)}, \Det^{(nk)}, P^{(nk)}$. Such a bound is given by~$\gamma(n, k, \delta) := n^{2nk}d^{nk(n+2)}$. Similarly to the proof of~\cref{thm:quantitative_estimates}, there is a group action of the symmetric group~$\mathfrak{S}_k$ over the zero set of~$\Idupinfty$ in~$\overline{\K(t)}^{nk(n+2)}$, obtained by permuting the duplicated blocks of coordinates, and which preserves the~$(z_0, \ldots, z_{nk-1})$-coordinate space. This group action implies that the algebraicity degree of~$F_1(t, a)$ is upper bounded
   by~$\gamma(n, k, \delta)/(nk)! =
   n^{2nk}(\delta(k+1)+1)^{nk(n+2)}/(nk)!$.
   
   Recall that we have one parameter~$t$, and~$nk(n+2)$ variables
  for the~$x_i$'s, the~$u_i$'s and the~$z_i$'s. We will make explicit some~$R\in\K[t, z_0]\setminus\{0\}$ annihilating~$F_1(t, a)$.
  To do so, we first compute (here we use the second part of~\eqref{hyp1} again) two polynomials~$V, W\in\K[t, \lambda]$
  such that: \textit{(i)}~$W$ is squarefree, \textit{(ii)} $\lambda$ is a new variable
  which is a $\K[t]$-linear
  combination of the~$nk(n+2)$ variables in~{Step~$2$}, \textit{(iii)} for
  all the zeros~$\mathbf{\alpha}\in\overline{\K(t)}^{nk(n+2)}$
  of~$\Sdup$ that are not solutions of~$\operatorname{sat}$,
  there exists~$\lambda_0\in\overline{\K(t, \epsilon)}$ solution in~$\lambda$
  of~$W(t, \lambda)=0$ such that
  $V(t, \lambda_0)/\partial_\lambda W(t, \lambda_0)$ is the $z_0$-coordinate of~$\mathbf{\alpha}$. Using these polynomials, it is straightforward
  to see by applying Stickelberger's theorem~\cite[Theorem~$1$]{Cox21}, using the radicality assumption in~\eqref{hyp1} and then applying~\cref{prop:elimdup}
  that the squarefree part of the
  resultant of~$z_0\cdot \partial_\lambda W(t, \lambda)- V(t, \lambda)$ and~$W(t, \lambda)$
  with respect to~$\lambda$ is a nonzero polynomial of~$\K[t, z_0]$ annihilating the series~$F_1(t, a)$.
  
   It remains to estimate the complexity of~{Step~$3$}.
  Following the proof of~\cref{thm:quantitative_estimates}, the application of the algorithm
  underlying~\cite[Theorem~$2$]{Schost03} allows us to compute~$V$ and~$W$
  in~$\tilde{O}((n^2kL+(n^2k)^4)\gamma(n, k, \delta)^2)$ operations in~$\K$, where~$L$ is the length of a
  straight-line program which evaluates the system~$\mathcal{S}_{{{\operatorname{dup}}}}$ and the
  polynomial~$\operatorname{sat}$. Since the
  cost for evaluating~$E_1, \ldots, E_n$ is included in~$O(d^{nk+n+2})$, then by the
  Baur-Strassen's theorem~\cite[Theorem~$1$]{BaSt83}, the complexity of evaluating the polynomials~$ \Det, P$
  is included in~$O(nd^{nk+n+2})$. Thus, evaluating~$E_1, \ldots, E_n, \Det, P$ has an arithmetic
  cost which is in~$O(n(\delta k)^{nk+n+2})$. As we considered~$nk$ duplication of the
  polynomials~$(E_1, \ldots, E_n,\Det, P)$, we find~$L\in O(n^3kd^{nk+n+2})\subset
  O(n^2k(\delta k)^{nk+n+2})$. It remains to compute the squarefree part~$R\in\K[t, z_0]\setminus\{0\}$ of
  the resultant of~$z_0\cdot \partial_\lambda W(t, \lambda) - V(t, \lambda)$ and~$W(t, \lambda)$
  with respect to~$\lambda$. Note that under~\eqref{hyp1}, the quantity~$\gamma(n, k, \delta)/(nk)!$
  bounds the partial degrees of~$R$.
  This resultant computation can be done
  by applying evaluation-interpolation with respect to~$t$ with~$O(n^{2nk}(\delta k)^{nk(n+2)}/(nk)!)$
  points. For each specialization at say~$t=\theta\in\mathbb{K}$, we apply for instance~\cite{Villard18}
  and deduce that the bivariate resultant computation of~$z_0\cdot \partial_\lambda W(\theta, \lambda)
  - V(\theta, \lambda)$ and~$W(\theta, \lambda)$ with respect to~$\lambda$ can be done
  in~\[\tilde{O}(\gamma(n, k, \delta)^{2.63}/(nk)!^{2.63})
  \subset
  \tilde{O}(n^{5.26nk}(\delta k)^{2.63 nk(n+2)}/(nk)!^{2.63})\] operations in~$\K$.
  Replacing~$L$ and~$\gamma(n, k, \delta)$ by their values and summing up all arithmetic costs,
  one deduces that the arithmetic cost for computing~$R$ is included in
    $\tilde{O}((nk\delta)^{5.26n(nk+k+1)})$.
\end{proof}

\begin{rmk}
  \textit{(i)} Note that the complexity in~\cref{cor:complexity} matches, as expected, the one that was proven in~\cite[Proposition~$4$]{BoNoSa23} in the case~$n=1$. One shall however see that the total degree of~$E_1$ is considered in~\cite[Proposition~$4$]{BoNoSa23}, whereas here we considered the total degree of~$Q_1$ and~$f_1$. Passing from the complexity in~\cref{cor:complexity} to the one in~\cite[Proposition~$4$]{BoNoSa23} is done by replacing~$\delta$ by~$\delta/k$.
  
  \textit{(ii)} It follows from the inclusion~$\tilde{O}((nk\delta)^{5.26n(nk+k+1)})\subset
  \tilde{O}((nk\delta)^{40(n^2k+1)})$
  that the arithmetic complexity stated in~\cref{cor:complexity} refines the arithmetic complexity
  stated in~\cref{thm:quantitative_estimates}.
\end{rmk}
As shown in~\cref{sec:generic_equations_of_first_order} and~\cref{sec:duplicate}, a natural way of solving~\eqref{eqn:initDDEsec4} is to compute a nonzero element $R\in\Idupinfty
\cap\mathbb{K}[t, z_0]$.
However, as already pointed out in~\cite[Section~$3$]{BoNoSa23} in the case~$n=1$, this process of duplicating variables yields an exponential growth of the degree of the ideal~$\Idupinfty$ with respect to the number~$nk$ of duplications~\cite[Proposition~$2$]{Heintz83}. It is known that the degree of the ideal generated by the polynomials in a given polynomial system is one of the main parameters controlling the complexity of solving the system (see, for example, \cite{BaFaSa15} for a complexity analysis of Faugère's $F_5$ algorithm in the context of homogeneous polynomials). Thus, if one can avoid the strategy of variable duplication one can potentially significantly reduce the exponent~$5.26n(nk+k+1)$ in the arithmetic complexity result of~\cref{cor:complexity}.


\subsection{Reducing a system of DDEs to a single functional equation}\label{sec:elim_duplicate}
\subsubsection{Main strategy}
In this subsection we elaborate on a strategy for solving systems of DDEs that avoids duplication of variables: it reduces the initial system to a single functional equation by eliminating the bivariate series $F_2, \ldots, F_n$ from the system $(E_1(u)=0, \ldots, E_n(u)=0)$. In a favourable situation, this reduction outputs a nonzero polynomial~$E\in\K(t)[x_1, u, z_0, \ldots, z_{nk}]$ such~that
\[
    E(u)\equiv E(F_1(t, u), u, F_1(t, a), \ldots, (\partial_u^{k-1}F_1)(t, a), \ldots, F_n(t, a), \ldots, (\partial_u^{k-1}F_n)(t, a)) = 0.
\]
The following lemma ensures that this strategy works under the following assumption: the saturated ideal~$\langle E_1, \ldots, E_n\rangle:\Det^\infty\cap\K(t)[x_1, u, z_0, \ldots, z_{nk-1}]$, denoted~by $\mathcal{J}$, is principal\footnote{Recall that an ideal is called principal if it is generated by only one element.} -- this natural assumption is observable on all examples of systems of DDEs we encountered so far.
\begin{lem}\label{lemE}
Consider~\eqref{eqn:init_system} and denote~$E_1, \ldots, E_n\in\K(t)[x_1, \ldots, x_n, u, z_0, \ldots, z_{nk-1}]$ the polynomials obtained after taking the numerators of~\eqref{eqn:init_system}. Assume that the ideal~$\mathcal{J}$ is principal, generated by some~$E\in\K(t)[x_1, u, z_0, \ldots, z_{nk-1}]$. Then~$E\neq 0$ and $E(u) = 0.$
\end{lem}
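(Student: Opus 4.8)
The plan is to exploit the two facts encoded in the hypothesis: first, that $\mathcal{J} = \langle E_1,\dots,E_n\rangle:\Det^\infty \cap \K(t)[x_1,u,z_0,\dots,z_{nk-1}]$ is principal with generator $E$, and second, that the tuple of series $(F_1(t,u),\dots,F_n(t,u),u,F_1(t,a),\dots,\partial_u^{k-1}F_n(t,a))$ — call it $\mathcal{P}$ — is a common zero of $E_1,\dots,E_n$ that does \emph{not} annihilate $\Det$. The latter point is exactly what \cref{lem:det_sols} (or the discussion preceding it, evaluating $\Det$ at $u$ generic rather than at the special roots $U_i$) provides: as a polynomial in $u$ over the relevant coefficient ring, $\Det(u)$ is not the zero series when the variables are specialized to $\mathcal{P}$, since its leading behaviour in $u$ comes from the diagonal terms $-u^{m_i}$ and hence is a nonzero Puiseux series. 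I would spell this out as: $\Det(\mathcal{P}) \neq 0$ in $\K[[t]][[u]]$ (or in the Puiseux ring), because the specialization of $\Det$ at $x_i = F_i(t,u)$ still has a term $\pm u^{M}(1 + O(t))$ of top $u$-degree after accounting for the $u^{m_i}$ clearings.

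First I would show $E \neq 0$. Since $\mathcal{J}$ is the saturation-then-elimination of the ideal $\langle E_1,\dots,E_n\rangle$ and $E$ generates it, it suffices to check $\mathcal{J} \neq \{0\}$. This follows from a dimension/elimination count: the system $E_1(u)=\dots=E_n(u)=0$ imposes $n$ independent-enough conditions (this is precisely the content built into \cref{lem:jac_ideal} and the Jacobian arguments, which show the relevant Jacobian with respect to $x_1,\dots,x_n$ is invertible at $\mathcal{P}$), so after eliminating $x_2,\dots,x_n$ one is left with a nontrivial constraint on $x_1$ together with $u$ and the $z_j$'s; the saturation by $\Det$ does not kill everything because $\mathcal{P}$ survives it. Concretely: if $\mathcal{J} = \{0\}$ then the projection of $V(\langle E_1,\dots,E_n\rangle:\Det^\infty)$ onto the $(x_1,u,z)$-coordinates would be dense, contradicting that generically the fibre in $(x_2,\dots,x_n)$ over such a point is finite and nonempty (invertibility of the $(x_2,\dots,x_n)$-Jacobian of $E_2,\dots,E_n$). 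So $E \neq 0$.

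Next, $E(u) = 0$, i.e. $E(\mathcal{P}) = 0$. By definition of saturation, $E \in \langle E_1,\dots,E_n\rangle:\Det^\infty$ means there is some $s \geq 0$ with $\Det^s \cdot E \in \langle E_1,\dots,E_n\rangle$, say $\Det^s \cdot E = \sum_{i=1}^n h_i E_i$ for polynomials $h_i$. Specializing every variable according to $\mathcal{P}$: the right-hand side vanishes because $E_i(\mathcal{P}) = 0$ for all $i$ (this is the defining property of the numerator polynomials $E_i$ — they were built so that their specialization at the true series is zero, up to the $u^{m_i}$ factor, which is harmless as an element of $\K[[t]][[u]]$). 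Hence $\Det(\mathcal{P})^s \cdot E(\mathcal{P}) = 0$ in $\K[[t]][[u]]$, which is an integral domain (it embeds in $\K((t))((u))$, or more safely in the Puiseux field $\overline{\K((t^{1/\star}))}((u))$). Since $\Det(\mathcal{P}) \neq 0$ by the observation above, we may cancel it and conclude $E(\mathcal{P}) = 0$, that is $E(u) = 0$.

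The main obstacle is the verification that $\Det(\mathcal{P}) \neq 0$ as a series — one must be careful that the specialization at the genuine series $F_i(t,u)$ does not accidentally make the determinant vanish identically. The clean way to see this is to note that $\Det$, as a polynomial in $u$ with coefficients in $\K(t)[X_1,\dots,X_n]$, has $u$-degree exactly $M = \sum m_i$ with leading coefficient $\pm 1$ (coming from $\prod_i(-u^{m_i})$, the product of diagonal entries $\partial_{x_i}E_i = -u^{m_i} + t(\cdots)$), and this leading term is unaffected by specialization; hence $\Det(\mathcal{P})$ is a series in $u$ of exact degree $M$ in $u$ (as an element of $\K[[t]][u]$, its top coefficient is a unit), so it is nonzero. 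Everything else in the argument is a routine application of the definition of saturation together with the integral-domain property of the relevant power series ring; no deformation is needed here, since we only use that the \emph{true} point $\mathcal{P}$ avoids $V(\Det)$, not the stronger genericity statements of \cref{sec:proof_thm}.
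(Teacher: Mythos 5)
Your proposal is correct and matches the paper's proof in all essentials: the paper likewise shows $\Det(u)\neq 0$ from the product of the diagonal entries (i.e.\ $\Det\equiv\pm u^{M}\bmod t$), deduces $E\neq 0$ from the Jacobian criterion applied to the saturated ideal $\langle E_1,\ldots,E_n\rangle:\Det^\infty$, and obtains $E(u)=0$ by writing $\Det^{s}\cdot E=\sum_i h_iE_i$, specializing at the true series, and cancelling in the integral domain $\K[[t]][[u]]$. One small correction: the coefficient of the top $u$-degree term of $\Det$ need not be $\pm 1$ (the $Q_i$ may contribute powers of $u$ above $u^{m_i}$, carried by positive powers of $t$), so the safe form of your nonvanishing argument is the one you state first, namely that the constant term in $t$ of $\Det(u)$ is $\pm u^{M}\neq 0$.
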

\begin{proof}
It results from the product of the diagonal elements in the matrix defining~$\Det$ that~$\Det \neq 0 \mod t$. Also,
the evaluation~$\Det(u)\in\K[[t]][[u]]$ is not equal to~$0$ (recall that~$\Det(u)$ is the specialization
of~$\Det$ to the series~$F_i(t, u)$ and~$(\partial_u^\ell F_i)(t, a)$). It follows from the Jacobian criterion
that the ideal
$\langle E_1, \ldots, E_n\rangle:\Det^\infty$ is radical and of dimension at most~$0$ over~$\K(t, z_0, \ldots, z_{nk-1})$. Thus~$E\neq 0$. Moreover, as~$E_1(u)=0, \ldots, E_n(u)=0$ and~$\Det(u)\neq 0$, it follows~$E(u)=0$. 
\end{proof}

\begin{ex2}
We compute a Gr\"obner basis of~$\langle E_1, E_2, m\cdot \Det -1 \rangle \cap \K(t)[x_1, u, z_0, z_1]$ and observe that it is not~$\{0\}$. Moreover, it is generated by a unique element 
\[E := -(x_1-1)(u-1) + tu(2ux_1^2-uz_0-2x_1^2+u+x_1-1) \in\K(t)[x_1, u, z_0, z_1].\]
\end{ex2}

For the remaining part of \cref{sec:elim_duplicate} we assume, as in~\cref{lemE}, that the ideal~$\mathcal{J}$ is principal. Thus the polynomial~$E$ obtained after eliminating~$x_2, \ldots, x_n$ satisfies~$E\neq 0$ and~$E(u)~=~0$.\\

A natural idea now is to use Bousquet-Mélou and Jehanne's method~\cite[Section~$2$]{BMJ06}. This reduces our problem either to solving a polynomial system with now $3nk+1$ unknowns and equations (where again the~$+1$ comes from the saturation polynomial~$\operatorname{sat}$), or to applying the more recent algorithm from~\cite[Section~$5$]{BoNoSa23}. Note that this method is not guaranteed to work in general because the functional equation given by~$E(u)=0$ may not be of a fixed point type for~$F_1$. As we will see, in order to make these approaches work, one can require the following~condition:
\begin{align}\label{C}\tag{\textbf{H2}}
  \underline{\text{Hypothesis~$\mathbf{2}$:}}  &\bullet \;\text{$\partial_{x_1} E(u)=0$ admits $nk$ distinct solutions (in~$u$) in $\overline{\K}[[t^{\frac{1}{\star}}]]\setminus\overline{\K}$,}\\
   \nonumber  &\bullet \;\text{The polynomial system 
    obtained after duplicating $nk$ times}\\
   \nonumber  &\;\;\;\;\;\text{the variables~$x_1, u$ in~$(E, \partial_{x_1}E, \partial_uE)$ and adding~$\operatorname{sat} = 0$}\\
   \nonumber & \;\;\;\;\;\text{induces
     an ideal of dimension~$0$ over~$\K(t)$.}
\end{align}
We emphasize that we are not aware of any system of DDEs inducing some~$E$ that is not
of a fixed-point type in~$F_1$: when the system is generic (that is, for generic choices of $f_i$'s and $Q_i$'s) we could observe by generating lots of examples that this fixed-point nature is indeed satisfied.

This resolution strategy being different from the duplication of variables approach from~\cref{sec:duplicate}, we investigate below how their outputs compare.

\subsubsection{Theoretical comparison with the strategy from~\cref{sec:duplicate}}

The following proposition ensures that 
the first part of~\eqref{hyp1} implies the first part of~\eqref{C}. Recall that $\mathcal{J} \coloneqq \langle E_1, \ldots, E_n\rangle:\Det^\infty\cap\K(t)[x_1, u, z_0, \ldots, z_{nk-1}]$.
\begin{prop}\label{prop:preservation_Ui}
    Let~$U(t)\in\overline{\K}[[t^{\frac{1}{\star}}]]$ be a solution of~$\Det(u)=0$ such that none of the~$(n-1)\times(n-1)$ minors of~$(\partial_{x_j}E_i(U(t)))_{1\leq i, j\leq n}$ is zero.
    Consider $E\in\mathcal{J}$. Then~$U(t)$ is
    also a solution in~$u$ of the equation~$\partial_{x_1}E(u) = 0$.
\end{prop}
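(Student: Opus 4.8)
The plan is to exploit the linear-algebraic relationship between the polynomial $E$ obtained by elimination and the original polynomials $E_1,\dots,E_n$. Since $E \in \mathcal{J} = \langle E_1,\dots,E_n\rangle : \Det^\infty \cap \K(t)[x_1,u,z_0,\dots,z_{nk-1}]$, there exist polynomials $C_1,\dots,C_n$ in $\K(t)[x_1,\dots,x_n,u,z_0,\dots,z_{nk-1}]$ and an integer $s\geq 0$ such that $\Det^s\cdot E = \sum_{i=1}^n C_i\cdot E_i$. First I would differentiate this identity with respect to $u$ and specialize all variables to their associated series values $x_i = F_i(t,u)$, $z_{(i-1)k+\ell} = (\partial_u^\ell F_i)(t,a)$, $u = U(t)$. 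Because each $E_i(u) = 0$ as a series identity and $\Det(U(t)) = 0$, the terms on the right-hand side in which $E_i$ is \emph{not} differentiated vanish, and the same is true for the term where $\Det^s$ is differentiated provided $s\geq 2$ (if $s=1$ one argues slightly differently, or one simply raises $s$). One is left, after specialization at $u=U(t)$, with a relation of the shape $\Det(U)^s\cdot\partial_u E(u)|_{u=U} = \sum_i C_i(U)\cdot \partial_u E_i(u)|_{u=U}$ together with the analogous (and more important) relation obtained by differentiating in $x_1$ instead.

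The cleaner route, which I would actually pursue, is to differentiate $\Det^s\cdot E = \sum_i C_i E_i$ with respect to $x_1$ and specialize at $u=U(t)$: this gives $\Det(U)^s\cdot \partial_{x_1}E(u)|_{u=U} = \sum_{i=1}^n C_i(U)\cdot \partial_{x_1}E_i(u)|_{u=U}$ (again the $\partial_{x_1}(\Det^s)$ term is killed by $\Det(U)=0$ for $s\geq 1$, and the $\partial_{x_1}C_i$ terms are killed by $E_i(u)=0$). But $\Det(U)^s = 0$, so this identity alone only tells us that $\sum_i C_i(U)\,\partial_{x_1}E_i(u)|_{u=U} = 0$. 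To extract information about $\partial_{x_1}E(u)|_{u=U}$ itself I instead use the following observation: from $\Det^s E = \sum_i C_i E_i$, after specializing only $u = U(t)$ (keeping $x_1$ a free variable, and $x_2,\dots,x_n$ specialized — this requires care), one uses that when $\Det(u)=0$ the kernel relation~\eqref{eqn:En_diff_u} forces a specific linear dependency among the rows $(\partial_{x_1}E_i,\dots,\partial_{x_n}E_i)$. The hypothesis that \emph{no} $(n-1)\times(n-1)$ minor of $(\partial_{x_j}E_i(U))_{1\le i,j\le n}$ vanishes means this kernel is exactly one-dimensional and every coordinate of a kernel vector is nonzero; this is what pins down $U$ as a root of $\partial_{x_1}E(u)$.

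Concretely, I expect the argument to run as follows. The elimination of $x_2,\dots,x_n$ from $E_1,\dots,E_n$ produces $E$, and by the standard theory of resultant-type elimination combined with the Jacobian/implicit-function setup, $\partial_{x_1}E$ can be expressed (up to a nonzero factor coming from $\Det$ being nonzero generically) as the determinant of the matrix $(\partial_{x_j}E_i)_{1\le i,j\le n}$ with its first column $(\partial_{x_1}E_1,\dots,\partial_{x_1}E_n)^{\mathsf T}$ replaced by itself — i.e. essentially $\partial_{x_1}E$ corresponds, after the elimination, to a minor/cofactor combination of the Jacobian of $(E_1,\dots,E_n)$ with respect to $(x_1,\dots,x_n)$. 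When $\Det(U)=0$ and all $(n-1)\times(n-1)$ minors are nonzero, the cofactor expansion of $\Det$ along any row shows that the cofactor vector is (up to scalar) the unique kernel vector, and the relation $\Det^s E = \sum C_i E_i$ then shows that $\partial_{x_1}E(u)|_{u=U}$ is proportional to a suitable cofactor which must vanish because $\Det(U)=0$. I would make this precise by writing $E = \sum_i \widetilde C_i E_i / \Det^s$ near a point where $\Det\neq 0$, differentiating, clearing denominators, and tracking which terms survive the specialization $u=U(t)$.

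The main obstacle I anticipate is the bookkeeping around the saturation power $s$ and the non-polynomial nature of the quotient $E = (\sum_i C_i E_i)/\Det^s$: one must be careful that the specialization $u = U(t)$ is legitimate (it is, since all quantities live in $\overline{\K}[[t^{1/\star}]]$ and $\Det(U(t)) = 0$ forces the numerator to vanish to at least order $s$ in the appropriate sense), and that differentiating in $x_1$ versus in $u$ is kept straight. The non-vanishing of all $(n-1)\times(n-1)$ minors of the Jacobian at $U$ is precisely the hypothesis that makes the one-dimensional kernel argument go through cleanly — without it the cofactor vector could be zero and the conclusion could fail — so the heart of the proof is identifying $\partial_{x_1}E(u)|_{u=U}$ with (a nonzero multiple of) an entry of that cofactor vector and concluding it is zero from $\Det(U)=0$.
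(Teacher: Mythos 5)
Your proposal assembles the right ingredients (the membership certificate $\Det^s E=\sum_i C_iE_i$, the singularity of the Jacobian at $U$, and the role of the nonvanishing $(n-1)\times(n-1)$ minors in pinning down a one-dimensional kernel), but the derivation you actually commit to does not reach the conclusion, and you concede as much: differentiating the certificate with respect to $x_1$ and specializing at $u=U$ annihilates the term $\Det(U)^s\,\partial_{x_1}E(U)$ outright, leaving only $\sum_i C_i(U)\,\partial_{x_1}E_i(U)=0$, an identity in which the quantity you want to control no longer appears. Your fallback --- that ``by the standard theory of resultant-type elimination'' the derivative $\partial_{x_1}E$ is ``proportional to a suitable cofactor which must vanish because $\Det(U)=0$'' --- is not a proof, and the phrasing is moreover inconsistent with the hypothesis: the cofactors \emph{are} the $(n-1)\times(n-1)$ minors, which are assumed \emph{nonzero}; what vanishes is $\Det(U)$ itself. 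If what you mean is $\partial_{x_1}E(U)\propto\Det(U)/(\text{minor})$ via an implicit-function-theorem heuristic, that is morally right but nowhere justified in the proposal.

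The missing idea --- which is essentially the entirety of the paper's short proof --- is to differentiate the membership identity with respect to the \emph{eliminated} variables $x_2,\dots,x_n$ as well. Since $E$ depends on none of them, the left-hand side of each of those $n-1$ derivatives is zero, and after specializing at $U$ (the terms carrying a factor $E_i(U)=0$ drop out) one obtains the homogeneous relations $\sum_{i=1}^n \partial_{x_j}E_i(U)\,V_i(U)=0$ for $j=2,\dots,n$; that is, the certificate vector $\bigl(V_1(U),\dots,V_n(U)\bigr)$ is annihilated by rows $2,\dots,n$ of the matrix $\bigl(\partial_{x_j}E_i(U)\bigr)_{j,i}$. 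Because this matrix is singular while all its $(n-1)\times(n-1)$ minors are nonzero, its first row lies in the span of the remaining ones, and pairing with the certificate vector yields $\partial_{x_1}E(U)=\sum_i\partial_{x_1}E_i(U)\,V_i(U)=0$. (The bookkeeping around the saturation exponent $s$ that you worry about is a genuine but secondary point --- the paper works directly with the specialized identity $E(U)=\sum_iE_i(U)V_i(U)$ --- whereas without the $j\geq2$ derivatives the argument simply does not close.)
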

\begin{proof}
Since $E\in\mathcal{J}$, there exist polynomials $V_1,\dots,V_n \in \K(t)[x_1,\dots,x_n, u, z_0, \ldots, z_{nk-1}]$ such that $E(U) = \sum_{i=1}^n E_i(U) V_i(U)$. Differentiating with respect to $x_j$ for $j=1,\dots,n$ and using that $E_i(U) = 0$ and that $E$ does not depend on $x_j$ for~$j \geq 2$, we find \begin{equation}\label{eq:R_matrix_V}
    \begin{pmatrix}
    \partial_{x_1} E(U)\\
    0\\
    \vdots\\
    0
    \end{pmatrix} = 
    \begin{pmatrix}
    \partial_{x_1}E_1(U) & \dots & \partial_{x_1} E_n(U) \\
    \vdots & \ddots & \vdots \\
    \partial_{x_n}E_1(U) & \dots & \partial_{x_n} E_n(U)
    \end{pmatrix}
    \begin{pmatrix}
    V_1(U)\\
    \vdots\\
    V_n(U)
\end{pmatrix}.
\end{equation}
By definition of $U$, the matrix $(\partial_{x_j }E_i(U))_{i,j}$ is singular and each of its $(n-1)\times(n-1)$ minors is nonzero. It follows that we can express the first row of the matrix as a linear combination of the other rows, then (\ref{eq:R_matrix_V}) implies that $\partial_{x_1} E(U) = 0$.
\end{proof}

It remains to understand the second part of~\eqref{C}.
We reuse the notations from the proof of~\cref{prop:preservation_Ui} by writing~$E = \sum_{i=1}^n{E_i\cdot V_i}$
for~$E$
a generator of $\mathcal{J}$ and for~$V_1, \ldots, V_n\in\K(t)[x_1,\dots,x_n, u, z_0, \ldots, z_{nk-1}]$.
Also, we define the new geometric assumption~\eqref{hypP}:
\begin{align}\label{hypP}\tag{\textbf{P}}
\underline{\text{Hypothesis}~\mathbf{(P)}:} &\;\; \text{Every point~$\mathbf{\beta}\in\acfield{t}^{nk+2}$ on which~$E$ vanishes is the projection onto}  \\   \nonumber&\;\;\text{the~$\{x_1, u, z_0, \ldots, z_{nk-1}\}$-coordinate space of a point $\alpha\in\acfield{t}^{nk+n+1}$ }\\
     \nonumber&\;\;\text{vanishing simultaneously~$E_1, \ldots, E_{n-1}$ and~$E_n$.}
\end{align}
Observe that the zero set of~$E$ in~$\acfield{t}^{nk+2}$ is, by the closure theorem~\cite[Theorem~$3$, p.$131$]{CoLiOSh15}, the Zariski closure in~$\acfield{t}^{nk+2}$ of the projection of the solution set in~$\acfield{t}^{nk+n+1}$ of~$\langle E_1, \ldots, E_n\rangle:\Det^\infty$ onto the~$\{x_1, u, z_0,\ldots, z_{nk-1}\}$-coordinate space. Assumption~\eqref{hypP}
formulates that the boundary of the projection belongs to the projection itself. 

From now on we will denote by~$\underline{x}$ (resp.~$\underline{z}$) the variables~$x_1, \ldots, x_n$ (resp. $z_0, \ldots, z_{nk-1}$). 
\begin{rmk}
 Once a Gr\"obner basis~$G$ of the ideal~$\langle E_1, \ldots, E_n\rangle:\Det^\infty$ is computed for the block order~$\{x_2, \ldots, x_n\}\succ_{lex} \{x_1, u, \underline{z}\}$, the set of points~$\beta$ in~\eqref{hypP} can
be characterized by the extension theorem~\cite[Theorem~3, Chap~3.1]{CoLiOSh15} as the solutions of explicit polynomial equations and inequations built from~$G$: namely $\beta$ are those solutions of~$G\cap\K(t)[x_1, u, \underline{z}]$ that do not vanish simultaneously all the leading terms of~$G\setminus (G\cap\K(t)[x_1, u, \underline{z}])$ for the degrevlex monomial order~$\{x_2, \ldots, x_n\}$ (the variables~$t, x_1, u, \underline{z}$ shall be seen as parameters once~$G$ is computed). These leading coefficients are thus polynomials of~$\K(t)[x_1, u, \underline{z}]$.
\end{rmk}
We formulate the below statement, whose purpose is to interpret geometrically the link between the algebraic sets~$V(E,\partial_{x_1}E, \partial_uE)\subset\acfield{t}^{nk+2}$ 
and~$V(E_1, \ldots, E_n, \Det, P)\subset\acfield{t}^{nk+n+1}$.
\begin{prop}\label{prop:preservation_dim}
Assume that~$\mathcal{J}$ is principal.
Let~$E\in\mathbb{K}(t)[x_1, u, z_0, \ldots, z_{nk-1}]$ be a generator of~$\mathcal{J}$ 
and suppose that~$V(E)\subset\overline{\K(t)}^{nk+2}$ is smooth.
Assume, moreover, that~\eqref{hypP} holds and denote
by $\pi_{x_1, u, \underline{z}}:(\underline{x}, u, \underline{z})
\in\acfield{t}^{nk+n+1}\mapsto(x_1, u, \underline{z})\in\acfield{t}^{nk+2}$
the canonical projection onto the~$(x_1, u, \underline{z})$-coordinate space.
Then we have the inclusion~$V(E, \partial_{x_1}E, \partial_uE)\subset
\pi_{x_1, u, \underline{z}}(V(E_1, \ldots, E_n, \Det, P))$.
\end{prop}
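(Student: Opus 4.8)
The plan is to take an arbitrary point $p=(x_1^\ast,u^\ast,\underline z^\ast)\in V(E,\partial_{x_1}E,\partial_u E)\subseteq\acfield{t}^{\,nk+2}$, to lift it to a point $\alpha\in\acfield{t}^{\,nk+n+1}$ lying on $V(E_1,\dots,E_n)$, and to check that $\Det(\alpha)=0$ and $P(\alpha)=0$. This places $\alpha$ in $V(E_1,\dots,E_n,\Det,P)$, so that $p=\pi_{x_1,u,\underline z}(\alpha)$ lands in the projection claimed in the statement. The lift costs nothing: since $p\in V(E)$, Hypothesis~\eqref{hypP} produces some $\alpha\in\acfield{t}^{\,nk+n+1}$ with $E_1(\alpha)=\dots=E_n(\alpha)=0$ and $\pi_{x_1,u,\underline z}(\alpha)=p$. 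Throughout I would work with the relation $E=\sum_{i=1}^n E_iV_i$ (with polynomial $V_i$) recalled just before the statement, and with the fact that $E$ involves only the variables $x_1,u,z_0,\dots,z_{nk-1}$.

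The computational core is to differentiate $E=\sum_i E_iV_i$ with respect to $x_1,\dots,x_n$, to $u$, and to each $z_j$, and then to specialize at $\alpha$. Since $E_i(\alpha)=0$ for every $i$, all terms $E_i\cdot\partial_\bullet V_i$ vanish; since $E$ does not depend on $x_2,\dots,x_n$, one has $\partial_{x_j}E\equiv 0$ for $j\geq 2$; and since $p$ annihilates $\partial_{x_1}E$ and $\partial_u E$, the right-hand sides corresponding to the $x$- and $u$-derivatives vanish as well. What survives says precisely that the vector $v:=(V_1(\alpha),\dots,V_n(\alpha))$ is a left null vector both of the matrix $\big(\partial_{x_j}E_i(\alpha)\big)_{1\leq i,j\leq n}$, whose determinant is $\Det(\alpha)$, and of the matrix obtained from it by replacing the last column $\big(\partial_{x_n}E_i(\alpha)\big)_i$ with $\big(\partial_u E_i(\alpha)\big)_i$, whose determinant is $P(\alpha)$. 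Hence, as soon as $v\neq 0$, both $\Det(\alpha)$ and $P(\alpha)$ vanish and the argument is complete.

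The step I expect to be the real obstacle is ruling out the degenerate case $v=0$, i.e.\ $V_1(\alpha)=\dots=V_n(\alpha)=0$, in which the kernel argument above is empty. This is exactly where the smoothness hypothesis on $V(E)$ must be used: differentiating $E=\sum_iE_iV_i$ with respect to $z_j$ and specializing at $\alpha$ gives $(\partial_{z_j}E)(p)=\sum_i(\partial_{z_j}E_i)(\alpha)V_i(\alpha)$, which is $0$ when $v=0$; together with $(\partial_{x_1}E)(p)=(\partial_u E)(p)=0$ this would make the full gradient of $E$ vanish at the point $p\in V(E)$, contradicting the assumed smoothness of $V(E)$ there (recall $E$ is squarefree, since $\langle E_1,\dots,E_n\rangle:\Det^\infty$ is radical and hence so is its contraction $\mathcal{J}$). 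Therefore $v\neq 0$ always, and the plan goes through. The remaining verifications — matching the two determinants to $\Det(\alpha)$ and $P(\alpha)$ through the row/column conventions in their definitions, and confirming that \eqref{hypP} applies at $p$ because $p\in V(E)$ — are routine.
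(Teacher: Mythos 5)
Your proposal is correct and follows essentially the same route as the paper's proof: lift $p$ to $\alpha$ via \eqref{hypP}, differentiate $E=\sum_i E_iV_i$ by Leibniz, observe that $(V_1(\alpha),\dots,V_n(\alpha))$ is a common null vector of the matrices defining $\Det$ and $P$, and rule out the degenerate case $V_1(\alpha)=\dots=V_n(\alpha)=0$ by the smoothness of $V(E)$ exactly as the paper does. The only cosmetic difference is that you handle $\Det$ and $P$ simultaneously with one null vector, where the paper treats $\Det$ first and then repeats the argument for the column vector $(\partial_u E_i)_i$.
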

\begin{proof}
Writing~$E = \sum_{i=1}^nE_i\cdot V_i$ for
some~$V_1, \ldots, V_n\in\K(t)[\underline{x}, u, \underline{z}]$
and considering the derivatives~$\partial_{x_i}E$ for~$i=1, \ldots, n$, we find by the Leibniz rule
\begin{equation}\label{eq:R_matrix_V1}
    \begin{pmatrix}
    \partial_{x_1} E\\
    0\\
    \vdots\\
    0
    \end{pmatrix} = 
    \begin{pmatrix}
    \partial_{x_1}E_1 & \dots & \partial_{x_1} E_n \\
    \vdots & \ddots & \vdots \\
    \partial_{x_n}E_1 & \dots & \partial_{x_n} E_n
    \end{pmatrix}
    \begin{pmatrix}
    V_1\\
    \vdots\\
    V_n
\end{pmatrix} + 
    \begin{pmatrix}
    \partial_{x_1}V_1 & \dots & \partial_{x_1} V_n \\
    \vdots & \ddots & \vdots \\
    \partial_{x_n}V_1 & \dots & \partial_{x_n} V_n
    \end{pmatrix}
    \begin{pmatrix}
    E_1\\
    \vdots\\
    E_n
\end{pmatrix}.
\end{equation}
Now for any~$\beta\in V(E, \partial_{x_1}E)$ and for
all~$\alpha\in\pi_{x_1, u, \underline{z}}^{-1}(\beta)\cap V(E_1, \ldots, E_n)$, we obtain that 
\begin{equation*}
    \begin{pmatrix}
    0\\
    \vdots\\
    0
    \end{pmatrix} = 
    \begin{pmatrix}
    \partial_{x_1}E_1(\alpha) & \dots & \partial_{x_1} E_n(\alpha) \\
    \vdots & \ddots & \vdots \\
    \partial_{x_n}E_1(\alpha)& \dots & \partial_{x_n} E_n(\alpha)
    \end{pmatrix}
    \begin{pmatrix}
    V_1(\alpha)\\
    \vdots\\
    V_n(\alpha)
\end{pmatrix}.
\end{equation*}
Thus it suffices to prove that~$V_i(\alpha)\neq0$ (for some~$i\in\{1, \ldots, n\}$) in order to to see that~$\Det(\alpha)=0$. As~$V(E)\in\acfield{t}^{nk+2}$ is smooth, the vector of partial derivatives of~$E$ w.r.t the variables $x_1, u, z_0, \ldots, z_{nk-1}$ does not vanish identically at~$\beta$. As~$E = \sum_{i=1}^nE_i\cdot V_i$, this enforces one of the~$V_i$ to be nonzero at~$\alpha$: else by writing all the partial derivatives of~$E$ with the Leibniz rule applied to the sum expression~$E = \sum_{i=1}^nE_i\cdot V_i$ and using~$\alpha\in V(E_1, \ldots, E_n, V_1, \ldots, V_n)$ would imply that
the vector of partial derivatives of~$E$ with respect to~$x_1, u, z_0, \ldots, z_{nk-1}$
vanishes identically~at~$\beta$.

Now consider the vector of derivatives~$(\partial_uE, \partial_{x_2}E, \ldots, \partial_{x_n}E)$ and write it in the form~\eqref{eq:R_matrix_V1}. This allows to reuse the above argument to show that~$P(\alpha)=0$, for any~$\alpha\in\acfield{t}^{nk+n+1}$ vanishing simultaneously~$\partial_u E, E_1, \ldots, E_n$. It follows that under assumption~\eqref{hypP},
we have the claimed inclusion~$V(E, \partial_{x_1}E, \partial_uE)\subset \pi_{x_1, u, \underline{z}}(V(E_1, \ldots, E_n, \Det, P))$.
\end{proof}

Let us mention that under the technical assumptions introduced in~\cref{prop:preservation_Ui} and~\cref{prop:preservation_dim}: if~$\Det(u)=0$ admits~$\ell$ distinct solutions (in~$u$) in~$\overline{\K}[[t^{\frac{1}{\star}}]]$ then so does~$\partial_{x_1}E(u)=0$.

Moreover still under the assumptions of~\cref{prop:preservation_Ui} and~\cref{prop:preservation_dim}, if the ideal~$\Idupinfty$
of~\cref{subsec:general_strategy_duplicate}
has dimension~$0$ over~$\K(t)$, then the application of~\cite[Section~$5$]{BoNoSa23} to the functional equation~$E(u)=0$ outputs a nonzero polynomial~$R\in\K[t, z_0]$ such that~$R(t, F_1(t, a))=0$.
 
 We gather below assumptions that we will use afterwards. Also for any multivariate polynomial~$p$, we denote by~$\operatorname{SqFreePart}(p)$ the squarefree part of~$p$.
 \begin{align}\label{hyp3}\tag{\textbf{H3}}
     \underline{\text{Hypothesis $3$:}} &\bullet \text{\eqref{hyp1} and~\eqref{hypP} hold},\\
          \nonumber                          & \bullet\text{$\mathcal{J}$ is principal
          and generates a smooth algebraic set in~$\overline{\K(t)}^{nk+2}$},\\
   \nonumber &\bullet \text{For all~$U(t)\in\overline{\K}[[t^{\frac{1}{\star}}]]$ solution of~$\Det(u)=0$,
    the~$(n-1)\times (n-1)$ minors}\\
    \nonumber& \;\;\;\;\text{of~$(\partial_{x_j}E_i(U(t)))_{1\leq i, j\leq n}$ are nonzero. }
 \end{align}
 \begin{prop}\label{prop:division}
    Assume that~\eqref{hyp3} holds. Denote~$R_1\in\K[t, z_0]$ a generator of~$\Idupinfty\cap\K[t, z_0]$ and~$R_2\in\K[t, z_0]$ the polynomial computed by the duplication approach\footnote{This duplication approach should in practice be replaced by the more efficient algorithm from~\cite[Section~$5$]{BoNoSa23}.} applied to the functional equation~$E(u)~=~0$.
    Then~$\operatorname{SqFreePart}(R_2)$ divides~$\operatorname{SqFreePart}(R_1)$.
 \end{prop}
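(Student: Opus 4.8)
The plan is to realise $R_1$ and $R_2$ as the polynomials obtained by eliminating all variables except $t,z_0$ from two zero‑dimensional ideals, and then to prove a purely set‑theoretic inclusion between the $z_0$‑coordinate sets of the two solution varieties; divisibility of the squarefree parts follows formally. Concretely, $R_1$ generates $\Idupinfty\cap\K[t,z_0]$, with $\Idupinfty=\langle\Sdup\rangle:\operatorname{sat}^\infty$ zero‑dimensional over $\K(t)$ by \eqref{hyp1}; and $R_2$ is the corresponding generator of $\mathcal{K}\cap\K[t,z_0]$, where $\mathcal{K}$ is the saturation by $\operatorname{sat}$ of the ideal generated by the $nk$ duplications of $(E,\partial_{x_1}E,\partial_uE)$. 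Under \eqref{hyp3} the ideal $\mathcal{K}$ is zero‑dimensional over $\K(t)$ by the discussion following \cref{prop:preservation_dim}, so that $R_2$ is well defined. Since $\Idupinfty$ and $\mathcal{K}$ are zero‑dimensional, the roots in $\overline{\K(t)}$ of $R_1$ (resp. of $R_2$) are exactly the $z_0$‑coordinates of the points of $V(\Idupinfty)$ (resp. of $V(\mathcal{K})$), so it is enough to show that every $z_0$‑coordinate occurring in $V(\mathcal{K})$ also occurs in $V(\Idupinfty)$.

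The heart of the argument is a block‑wise lifting. Given $\beta\in V(\mathcal{K})$ with coordinates $\beta_{x_1,i},\beta_{u,i}$ for $1\le i\le nk$ and $\beta_{\underline z}=(\beta_{z_0},\dots,\beta_{z_{nk-1}})$, the saturation by $\operatorname{sat}=\prod_{i\ne j}(u_i-u_j)(u_i-a)t$ forces the $\beta_{u,i}$ to be pairwise distinct and different from $a$, and for each $i$ the triple $(\beta_{x_1,i},\beta_{u,i},\beta_{\underline z})$ lies in $V(E,\partial_{x_1}E,\partial_uE)\subseteq\overline{\K(t)}^{nk+2}$. I would then invoke \cref{prop:preservation_dim} to lift this triple to a point $\tilde\alpha_i=(\tilde\alpha_{i,1},\dots,\tilde\alpha_{i,n},\beta_{u,i},\beta_{\underline z})\in V(E_1,\dots,E_n,\Det,P)$ with $\tilde\alpha_{i,1}=\beta_{x_1,i}$, and concatenate the $n$‑tuples $(\tilde\alpha_{i,1},\dots,\tilde\alpha_{i,n})$ into the $x$‑slots of one point $\alpha$, keeping $\beta_{u,1},\dots,\beta_{u,nk}$ in the $u$‑slots and $\beta_{\underline z}$ in the $z$‑slots. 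By construction the $i$‑th duplicated block of $\alpha$ is $\tilde\alpha_i$, so $\alpha$ annihilates every $E_j^{(i)},\Det^{(i)},P^{(i)}$ and lies in $V(\Sdup)$; since $\operatorname{sat}(\alpha)\ne0$ and $V(\Idupinfty)=V(\Sdup)\setminus V(\operatorname{sat})$ by zero‑dimensionality, one obtains $\alpha\in V(\Idupinfty)$ with the same $z_0$‑coordinate $\beta_{z_0}$. This proves the inclusion, hence $\operatorname{SqFreePart}(R_2)\mid\operatorname{SqFreePart}(R_1)$ in $\K(t)[z_0]$, and since both squarefree parts are primitive elements of $\K[t,z_0]$, Gauss's lemma upgrades this to divisibility in $\K[t,z_0]$.

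I expect the only genuinely delicate point to be the lifting step, i.e. the inclusion $V(E,\partial_{x_1}E,\partial_uE)\subseteq\pi_{x_1,u,\underline z}(V(E_1,\dots,E_n,\Det,P))$; but this is precisely \cref{prop:preservation_dim}, which already absorbs the smoothness of $V(E)$, \eqref{hypP}, and the minor conditions in \eqref{hyp3}, so here it can simply be quoted. Everything else is routine bookkeeping: that $R_1$ and $R_2$, being generators of the elimination ideals of zero‑dimensional ideals, have root sets equal to the corresponding $z_0$‑coordinate projections; that $V(\Idupinfty)=V(\Sdup)\setminus V(\operatorname{sat})$ is the standard description of a saturation when the resulting ideal is zero‑dimensional; and the Gauss‑lemma passage between $\K(t)[z_0]$‑ and $\K[t,z_0]$‑divisibility of primitive polynomials. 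One should also make explicit, via \cref{prop:preservation_Ui}, \cref{prop:preservation_dim} and the remarks following them, that \eqref{hyp3} does guarantee the zero‑dimensionality of $\mathcal{K}$, which is what legitimises the reduction to the displayed inclusion.
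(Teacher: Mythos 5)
Your proposal is correct and follows the same route as the paper, whose proof of \cref{prop:division} consists of the single sentence that the statement is a direct consequence of \cref{prop:preservation_Ui} and \cref{prop:preservation_dim}; your block-wise lifting via the inclusion $V(E,\partial_{x_1}E,\partial_u E)\subset\pi_{x_1,u,\underline{z}}(V(E_1,\dots,E_n,\Det,P))$, followed by the standard elimination-theoretic bookkeeping, is exactly the intended expansion of that one-liner. Your remark that the zero-dimensionality of the duplicated ideal for $E$ (hence the well-definedness of $R_2$) should be made explicit is a fair point that the paper itself glosses over.
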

\begin{proof}
The statement is a direct consequence of~\cref{prop:preservation_Ui} and \cref{prop:preservation_dim}.
\end{proof}
In this section, so far, we compared two different strategies based for the first one on a duplication of variables argument, and for the second one on the reduction to a single equation. We showed that up to considering squarefree parts, the output polynomial of the second strategy divides, under technical assumptions, the output polynomial of the first strategy. 

Even if we strongly believe that the introduced assumptions are reasonable for practical considerations, we shall however underline that the literature contains some degenerate examples that go against this believe. The following example illustrates this degeneracy and thus strengthen the relevancy of the strategy from~\cref{sec:duplicate}, even if the arithmetic complexity in~\cref{cor:complexity} can seem despairing. 

\begin{ex2}\label{ex18}
In this case~\cref{prop:preservation_Ui} cannot be applied, since 
\begin{align*}
    \Det = \det\begin{pmatrix} 4tu^2x_1-4tux_1+tu-u+1 & 0 \\\star& 2tu^2x_1-2tux_1+tu-u+1\end{pmatrix},
\end{align*}
so any of the two distinct solutions~$U_1, U_2\in\overline{\K}[[t^{\frac{1}{\star}}]]$ to the equation~$\Det(u)=0$ annihilates at least two 
coefficients of the matrix~$(\partial_{x_j}E_i)_{1\leq i, j\leq 2}$. Moreover, 
it is straightforward to see that for the polynomial~$E = -(x_1-1)(u-1) + tu(2ux_1^2-uz_0-2x_1^2+u+x_1-1) \in\K(t)[x_1, u, z_0, z_1]$ the equation $\partial_{x_1}E(u)=0$ has only one solution (in~$u$) in the ring~$\overline{\K}[[t^{\frac{1}{\star}}]]$ while~$E(u)=0$ involves two univariate series~$F_1(t, 1)$ and~$F_2(t, 1)$. Thus, on this example, the strategy described in the current section fails.
\end{ex2}

\section{Conclusion and future works} \label{sec:discussions}

We proved in \cref{thm:main_thm} that solutions of systems of discrete differential equations are algebraic functions. The proof uses the observation that, up to a symbolic deformation pushing all degeneracy of a given system of the form~\eqref{eqn:init_system} to some higher powers of~$t$, all the computations become explicit. In addition, we obtained in \cref{thm:quantitative_estimates} quantitative estimates for the size of an annihilating polynomial for any solution~$F_i$, together with arithmetic complexity estimates. 

We also refined the complexity estimate in~\cref{cor:complexity} and also obtained an algebraicity bound for all the~$F_i(t, a)$. Moreover, we started an algorithmic analysis of the problem of solving systems of DDEs. In this first work we compared two natural strategies for solving systems of~DDEs. We identified rigorous assumptions ensuring their applicability illustrating them on the Example \ref{ex:eq27}, where the reduction to a single equation fails, while the duplication approach works. Under these assumptions, the outputs of these two strategies are linked via \cref{prop:division}.

Our paper contains several research directions which we plan to investigate in the future:

\paragraph{Algorithmic part:} This article set the foundations of an algorithmic work that must be conducted in order to understand better the many subtle and degenerate situations that one 
can encounter with systems of DDEs from the literature. 
For instance, the following points should be addressed:
\begin{itemize}
    \item \textbf{Validity of the assumptions:} It seems clear that up to deforming symbolically as in~\eqref{eqn:deformed_system}, assumption~\eqref{hyp1} can be assumed to hold. Still, we do not know at this stage if~\eqref{hypP} holds once~\eqref{eqn:init_system} is deformed as in~\eqref{eqn:deformed_system}. Also, we would like to have results saying that the all introduced assumptions hold for generic choices of systems of DDEs.
    \item \textbf{New algorithms for solving systems of DDEs:} In~\cite[Section~$5$]{BoNoSa23} an algorithm was designed that computes in case of scalar DDEs the same output as the algorithm described in~\cref{sec:duplicate} and that avoids the duplication of variables. This strategy can be naturally extended to the context of systems of DDEs. Once this is done, a first task would be to compare this new strategy with the ones described in~\cref{sec:elim_duplicate}. Also, the case of systems of linear DDEs contains many applications in which it is possible to use the linearity in order to speed up the computations. All this should be clarified and studied in depth. 
   \end{itemize}

\paragraph{Theoretical part:} In the case of more than one catalytic variable, the algebraicity of the solutions remains guaranteed by Popescu's theorem~\cite{Popescu85a} under the additional assumption that the variables are ``nested''. It would be interesting to reprove this result with elementary tools in the case of fixed-point equations and to introduce new algorithms for solving these equations.

\paragraph{Acknowledgments:}
We thank the anonymous referees for their helpful comments and suggestions. 
We would also like to thank Mohab Safey El Din for many interesting discussions and Alin Bostan for his great support and valuable comments on an early version of this manuscript. We are also grateful to Mireille Bousquet-Mélou for exciting and enlightening discussions, and to Eva-Maria Hainzl and Wenjie Fang for pointing out to us the existence of the periodic Tamari intervals and their associated DDEs. Also, we are grateful to Mathilde Bouvel and Nicholas Beaton for pointing out~\cite{BBGR19} for applications of our paper. Finally, we are thankful to the anonymous reviewers for the helpful suggestions and corrections. 

Both authors were supported by the ANR-19-CE40-0018 \href{https://specfun.inria.fr/chyzak/DeRerumNatura/}{De Rerum Natura} project and the French–Austrian ANR-22-CE91-0007 \& FWF I6130-N project EAGLES; the second author was funded by \href{https://www.artech.at/}{A\&R TECH} as well as the DOC fellowship P-26101 of the \href{https://www.oeaw.ac.at/en/1/austrian-academy-of-sciences}{ÖAW}, the \href{https://oead.at/en/}{WTZ collaboration project} FR-09/2021 \& FR 02/2024 and the project P34765 of the \href{https://www.fwf.ac.at/en}{ Austrian science fund FWF}.

\newpage

{\footnotesize \bibliographystyle{abbrv}
\bibliography{sample}
}

\end{document}